\newcommand{\PP}{\mathbb P}
\newcommand{\CC}{\mathbb C}
\newcommand{\OO}{\mathcal O}
\newcommand{\ch}{\operatorname{ch}}
\newcommand{\op}[1]{\operatorname{#1}}
\newcommand{\todd}{\operatorname{td}}
\newcommand{\Sym}{\operatorname{Sym}}
\newcommand{\Quot}{\operatorname{Quot}}
\newcommand{\ext}{\operatorname{ext}}
\DeclareMathOperator{\Pic}{Pic}
\DeclareFontFamily{OT1}{pzc}{}
\DeclareFontShape{OT1}{pzc}{m}{it}{<-> s * [1.10] pzcmi7t}{}
\DeclareMathAlphabet{\mathpzc}{OT1}{pzc}{m}{it}
\newcommand{\TOR}{\operatorname{\mathpzc{Tor}}}
\newtheorem{lem}{Lemma}[section]
\newtheorem{cor}[lem]{Corollary}
\newtheorem{prop}[lem]{Proposition}
\newtheorem{thm}[lem]{Theorem}
\newtheorem*{thmA}{Theorem A}
\newtheorem*{thmB}{Theorem B}
\newtheorem{conj}[lem]{Conjecture}
\newtheorem*{prop*}{Proposition}
\theoremstyle{definition}
\newtheorem{defn}[lem]{Definition}
\newtheorem{rem}[lem]{Remark}
\newtheorem{ex}[lem]{Example}
\title[Strange duality, quot schemes, and multiple point formulas]{Le Potier's strange duality, quot schemes, and multiple point formulas for del Pezzo surfaces}
\author{Aaron Bertram, Thomas Goller, and Drew Johnson} \thanks{The authors were partially supported by the NSF RTG grant DMS-1246989.}
\begin{document}

\begin{abstract}
We study Le Potier's strange duality on del Pezzo surfaces using quot schemes to construct independent sections of theta line bundles on moduli spaces of sheaves, one of which is the Hilbert scheme of $n$ points. For $n \le 7$, we use multiple point formulas to count the length of the quot scheme, which agrees with the dimension of the space of sections on the Hilbert scheme. When the surface is $\PP^2$ and $n$ is arbitrary, we use nice resolutions of general stable sheaves to show that the quot schemes that arise are finite and reduced. Combining our results, we obtain a lower bound on the rank of the strange duality map, as well as evidence that the map is injective when $n \le 7$.
\end{abstract}

\maketitle

\section{Introduction}
Let $S$ be a smooth, projective del Pezzo surface over the complex numbers with ample anti-canonical 
class $-K_S$. Let
$$ e,f \in H^*(S,{\mathbb Q}) =  H^0(S,{\mathbb Q}) \oplus H^2(S,{\mathbb Q}) \oplus H^4(S,{\mathbb Q}) $$ 
be cohomology classes that are orthogonal with respect to the {\it Mukai pairing}
\[
\chi(e,f) = \int_S e^\vee \cup f \cup \todd(S),
\]
where we write $e = (e_0,e_1,e_2)$ (as with Chern classes), $e^\vee = (e_0,-e_1,e_2)$ and 
$\todd(S) = (1,-{K_S}/2,1)$ is the Todd class of  $S$. This pairing is designed so that 
if $E$ and $F$ are coherent sheaves on $S$, then their Chern characters pair as
$$\chi\big(\ch(E),\ch(F)\big) = \chi(E,F) = \sum (-1)^i\ext^i(E,F) $$
 by the Hirzebruch-Riemann-Roch Theorem. 

Orthogonal classes $e,f$ are {\it candidates for strange duality} if  the moduli spaces $M_S(e^{\vee})$ and $M_S(f)$ of Gieseker-semistable coherent sheaves with the indicated Chern characters
are non-empty, and if the following conditions on pairs $(\hat{E},F) \in M_S(e^{\vee}) \times M_S(f)$ are satisfied:
\begin{enumerate}[(a)]
\item $h^2(\hat{E} \otimes F) = 0$ and $\TOR^1(\hat{E},F)=\TOR^2(\hat{E},F)=0$ for all $(\hat{E},F)$ away from a codimension $\ge 2$ subset, and
\item $h^0(\hat{E} \otimes F)=0$ for some $(\hat{E},F)$.
\end{enumerate}
These conditions ensure that the ``jumping locus''
$$\Theta = \{\, (\hat{E},F) \mid h^0(\hat{E} \otimes F) > 0 \,\} \subset M_S(e^{\vee}) \times M_S(f) $$
has the structure of a Cartier divisor by a classical argument exhibiting $\Theta$ as the vanishing locus of a map of vector bundles of the same rank (\cite{le_potier_dualite}, \cite{MR2319915}, \cite{danila_resultats_2002}). It follows from the del Pezzo condition on $S$ that the 
line bundle associated to $\Theta$ satisfies
$$ {\mathcal O}_{M_S(e^{\vee})\times M_S(f)}(\Theta) = \pi_1^*{\mathcal O}_{M_S(e^{\vee})}(\Theta_f)\otimes \pi_2^*{\mathcal O}_{M_S(f)}(\Theta_{e^{\vee}}),$$
where $\Theta_f$ and $\Theta_{e^{\vee}}$ are restrictions of $\Theta$ to general fibers of the projections. Thus
$$H^0\big(M_S(e^{\vee}) \times M_S(f), {\mathcal O}(\Theta)\big) = H^0\big(M_S(e^{\vee}),{\mathcal O}(\Theta_f)\big) \otimes 
H^0\big(M_S(f),{\mathcal O}(\Theta_{e^{\vee}})\big),$$
so a section defining $\Theta$ determines a map
$$\operatorname{SD}_{e,f}: H^0\big(M_S(f),{\mathcal O}(\Theta_{e^{\vee}})\big)^* \rightarrow H^0\big(M_S(e^{\vee}),{\mathcal O}(\Theta_f)\big)$$
that is well-defined up to a choice of a (non-zero) scalar.

\begin{conj}[Le Potier's Strange Duality] $\operatorname{SD}_{e,f}$ is an isomorphism.
\end{conj}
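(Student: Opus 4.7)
The plan is to prove the isomorphism by producing explicit spanning families of sections on both sides of $\operatorname{SD}_{e,f}$ and showing that the map carries one bijectively to the other. Since the conjecture is open in general, my approach targets the case $M_S(f) = \Hilb^n(S)$, in which $f$ is the Chern character of a rank-one torsion-free sheaf of colength $n$, and aims first at injectivity there; a symmetric argument on $M_S(e^{\vee})$ would then close the loop.

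For a general $E \in M_S(e^{\vee})$, I would construct an appropriate quot scheme associated to $E$ (for instance $\Quot(\OO_S^N, f)$ where $N = h^0(E)$ and $E$ is globally generated, or a quot scheme of subsheaves of $E$ of the relevant class) whose points yield sections of $\OO_{\Hilb^n(S)}(\Theta_{e^{\vee}})$ via a tautological construction on the universal family. Each isolated reduced point of the quot scheme should contribute an independent section vanishing at the corresponding $[Z] \in \Hilb^n(S)$. The key steps are: (i) show that for general $E$ the quot scheme is $0$-dimensional and reduced of length equal to the expected Euler characteristic; (ii) compute this length via multiple point formulas (for $n \le 7$, where the explicit corrections have been derived) or by resolving $E$ as a short complex of line bundles (on $\PP^2$, reducing the problem to a Porteous-type determinantal calculation); (iii) deduce linear independence of the resulting sections from the distinctness of their vanishing loci in $\Hilb^n(S)$; and (iv) match the count with $h^0(\Hilb^n(S), \OO(\Theta_{e^{\vee}}))$ to force surjectivity of the section map and hence injectivity of $\operatorname{SD}_{e,f}$.

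The main obstacle is step (iv): $h^0(\Hilb^n(S), \OO(\Theta_{e^{\vee}}))$ is controlled by a Verlinde-type formula whose general form for theta bundles on $\Hilb^n(S)$ is not available on arbitrary del Pezzo surfaces, and the requisite higher cohomology vanishing is equally unclear. A secondary difficulty is that explicit multiple point formulas beyond the $7$-fold case have not been written down, so step (ii) is essentially limited to $n \le 7$ in the absence of a new enumerative input, or to $\PP^2$ where the line bundle resolutions do the job directly. I therefore expect the strategy, as stated, to yield unconditionally only a sharp lower bound on $\rank(\operatorname{SD}_{e,f})$ together with full injectivity in the $n \le 7$ range on any del Pezzo (and on $\PP^2$ for all $n$), with the complementary surjectivity --- and thus the full conjecture --- requiring ideas beyond the quot-scheme framework outlined here.
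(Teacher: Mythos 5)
Your judgment that this approach cannot yield the full isomorphism is correct: the statement is a conjecture that the paper never proves, and the paper's own unconditional output is the lower bound $\rank \operatorname{SD}_{e,f} \ge \#\Quot\big(V,(1,0,-n)\big)$ of Corollary \ref{c:SD_injective}, with genuine injectivity established only in the handful of cases listed there. However, two of your steps need repair. The Quot scheme is not $\Quot(\OO_S^N, f)$ for $N = h^0(E)$, nor a Quot scheme of subsheaves of a fixed $\hat E \in M_S(e^{\vee})$: following \cite{marian_level-rank_2007}, it is $\Quot(V,f)$ for a single general $V$ of class $v = e+f$. This is not cosmetic. The linear independence of the resulting sections does not come from the $F_i$ having distinct support on $S^{[n]}$; it comes from the fact that each quotient $0 \to E_i \to V \to F_i \to 0$ has kernel $E_i$ of class $e$, and semistability together with $\chi(E_i,F_j)=0$ forces $\hom(E_i,F_j) > 0$ whenever $i \ne j$, so that $F_j$ lies on the theta divisor $\Theta_{E_i}$ for all $j \ne i$ but $F_i$ does not. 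With $\OO_S^N$ the kernels have the wrong invariants and this mechanism collapses.

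You also overstate what is unconditional. For $2 \le n \le 7$ the identification of the length of $\Quot(V,(1,0,-n))$ with $h^0\big(S^{[n]},\OO(\Theta_{e^{\vee}})\big)$ rests on multiple point formulas whose validity requires topological admissibility of the map $P(G) \to P\big(H^0(V^*)\big)$, a hypothesis with no known algebraic formulation; so injectivity in that range is \emph{predicted}, not proven. Unconditional injectivity holds only for $n=1$ and the three classical cases $(n,r) \in \{(2,2),(2,3),(3,2)\}$, and only over $\PP^2$, where Theorem B shows via resolutions of $V^*$ by line bundles that the Quot scheme is finite and reduced. Those resolutions are used precisely to prove finiteness and reducedness, not to count points; the $n$-fold point locus is not a degeneracy locus of the maps in the resolution, so the Porteous-type determinantal calculation you float in step (ii) is not an available substitute for the multiple point input and would need its own justification.
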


As a first check (which motivated Witten's analogous conjecture for curves), one might 
ask whether the dimensions of the two vector spaces coincide. Unfortunately, these dimensions are not known, in general,
even when $S = \PP^2$. 
An exception to this is the case $f = (1,0,-n)$, the Chern character of the ideal sheaf ${\mathcal I}_Z \subset {\mathcal O}_S$
of a zero-dimensional length $n$ subscheme $Z \subset S$. In this case, we may identify
$$M_S(f) = S^{[n]}$$
with the Hilbert scheme, and in \S 3, we use \cite{ellingsrud_cobordism_2001} 
to compute $h^0\big(S^{[n]},\OO(\Theta_{e^{\vee}})\big)$ for all orthogonal classes $e$ and $n \le 7$. Interestingly, even here there is 
no known closed formula for the dimensions as functions of $n$ and $e$. 

\medskip

Our strategy in this paper is to attack ``half'' the strange duality conjecture in the case $f = (1,0,-n)$, 
i.e. to show that $\operatorname{SD}_{e,f}$ is injective 
by using a Grothendieck quot scheme argument from \cite{marian_level-rank_2007}. The argument is as follows. 

Let $v = e + f$ be the Chern character of a direct sum $E \oplus F$ of coherent sheaves of Chern characters $e$ and $f$, and suppose 
$V$ is a coherent sheaf with $\ch(V) = v$. Then each element of the Grothendieck quot scheme $\Quot(V,f)$ of coherent sheaf quotients of $V$ of class $f$ gives rise to an exact sequence
\[
	0 \rightarrow E \rightarrow V \rightarrow F \rightarrow 0 \tag{$*$}
\]
and $\chi(E,F) = 0$ is the expected dimension of $\Quot(V,f)$. Moreover, if
$$\hom(E,F) = \ext^1(E,F) = \ext^2(E,F) = 0$$
then the point of $\Quot(V,f)$ corresponding to $(*)$ is isolated and reduced. 

Now suppose a sufficiently general $V$ may be chosen so that $\Quot(V,f)$ is finite and that each quotient
\[
	0 \rightarrow E_i \rightarrow V \rightarrow F_i \rightarrow 0 \tag{$*_i$}
\]
has the property that $E_i$ and $F_i$ are Gieseker-semistable, that $\ext^j(E_i,F_i) = 0$ for all $i,j$, and that $E_i$ is locally free.\footnote{Local freeness will be automatic since $V$ will be locally free and the $F_i$ will be torsion free.} It then follows from stability that $h^0(E_i^* \otimes F_j) = \hom(E_i,F_j) > 0$ for $i \ne j$, and hence that the hyperplanes in $H^0\big(M_S(f),\OO(\Theta_{e^{\vee}})\big)$ determined by the points $F_i \in M_S(f)$ map to linearly independent lines $\Theta_{F_i} \in P\big(H^0(M_S(e^{\vee}),\OO(\Theta_f))\big)$ under the map $\operatorname{SD}_{e,f}$. Thus, the existence of such a sufficiently general $V$ would imply that the length of $\Quot(V,f)$ is bounded above by the dimension of the vector space $H^0\big(M_S(f),\OO(\Theta_{e^{\vee}})\big)$, and that if the dimensions agree, then $\operatorname{SD}_{e,f}$ is injective.

\medskip

This brings us to a second reason for considering the case $f= (1,0,-n)$, namely, that in this case we may find an ``expected'' length of $\Quot\big(V,(1,0,-n)\big)$ by interpreting ideal sheaf quotients of the vector bundle $V$ as multiple points of a map from an auxiliary variety (obtained from $V$) to projective space. Before we get too far into this analysis, we consider several special cases.

\subsection{Torsion sheaves on $\PP^1$.}

Instead of a del Pezzo surface, we consider here the ``del Pezzo curve'' $\PP^1$ and the
orthogonal cohomology classes $e = (0,m)$ and $f = (0,n)$ in $H^*(\PP^1,{\mathbb Z})$. These are the 
Chern characters of 
Gieseker-semistable torsion sheaves ${\mathcal O}_T$ and ${\mathcal O}_U$ associated to subschemes $T,U \subset \PP^1$ which vary in 
moduli spaces $\PP^m = \Sym^m(\PP^1)$ and $\PP^n = \Sym^n(\PP^1)$, respectively. Moreover, 
$$\Theta = \{\, (\OO_T,{\mathcal O}_U) \mid h^0(\OO_T \otimes \OO_U) > 0 \,\} \subset \PP^m \times \PP^n $$
is a divisor of bidegree $(n,m)$, and strange duality in this case is the assertion that
$$\operatorname{SD}_{e,f}: H^0(\Sym^m\PP^1, {\mathcal O}_{\PP^m}(n))^* \rightarrow H^0(\Sym^n\PP^1, {\mathcal O}_{\PP^n}(m))$$
is an isomorphism. 

Now consider a ``general'' coherent sheaf $V$ on $\PP^1$ of class $v = e + f = (0,m+n)$. Here the notion of general is easy to quantify: a general sheaf is any structure sheaf ${\mathcal O}_V$ of a {\it reduced} subscheme $V \subset \PP^1$ of length $m + n$. Then $\Quot({\mathcal O_V},f)$ consists of all choices of $n$ of the $m+n$ points of $V$. This is a reduced scheme of cardinality
$$\binom{m + n}{n} = h^0\big(\Sym^m\PP^1, {\mathcal O}_{\PP^m}(n)\big) = h^0\big(\Sym^n\PP^1, {\mathcal O}_{\PP^n}(m)\big),
$$
which shows, by the argument above, that the strange duality map in this context is an isomorphism.\footnote{An unusual feature of this example is that $e$ is not dualized in the setup of strange duality.} In fact, if $\CC^2$ is the (self-dual) standard representation of SL$(2,\CC)$, then this isomorphism from strange duality is the Hermite reciprocity isomorphism between representations $\Sym^m \Sym^n(\CC^2)$ and 
$\Sym^n \Sym^m(\CC^2)$. 

\subsection{Rank one} Back to the case of del Pezzo surfaces, this is the case originally considered by Le Potier, in which $f = (1,0,-n)$ and $e = (1,-L,s)$ and the condition $\chi(e,f) = 0$ implies $\chi(V^*)=1$
when $V^*$ is the dual of a (rank two) vector bundle $V$ of class $e + f$ on $S$ . Moreover, the second Chern class satisfies
\[
	c_2(V^*) = \chi(L) = h^0(L),
\]
so one expects $V^*$ to have one section vanishing at $\chi(L)$ points. Indeed, let $V^*$ be a general extension $0 \to \OO_S \to V^* \to L \otimes \mathcal{I}_W \to 0$, where $W \subset S$ is a general zero-dimensional subscheme of length $\chi(L)$ and thus $L \otimes \mathcal{I}_W$ has no cohomology. Then $V^*$ has a unique section, which vanishes on $W$.

The two spaces of sections in strange duality are pulled back from rational maps
$$\phi \colon S^{[n]} \rightarrow \operatorname{Gr}\big(\chi(L),n\big) \quad \text{and} \quad 
\psi \colon S^{[m]} \rightarrow \operatorname{Gr}\big(n,\chi(L)\big)$$
to Grassmannians of quotients (respectively subspaces) of dimension $n$ (see \cite{arcara_minimal_2013}). Here $m = \chi(L)-n$ and $M_S(e^{\vee}) \simeq S^{[m]}$. Strange duality reduces to the ordinary duality of sections of ${\mathcal O(1)}$ on the Grassmannians of quotients and subspaces of a fixed dimension of a fixed vector space, as observed originally by Le Potier.

As for the quot scheme, notice that each choice of $n$ of the $\chi(L)$ zeros $W \subset S$ of the section of $V^*$ gives a factorization
$$q: V \twoheadrightarrow {\mathcal I}_W \rightarrow {\mathcal I}_Z,$$ 
where $Z \subset S$ is the reduced scheme supported at the $n$ points. By \emph{tilting} the category of coherent sheaves on $S$, we may regard these ideal sheaves as the quotients of $V$ of class $f = (1,0,-n)$ and notice that the number of them matches the dimensions of the sections of the determinant line bundles associated to $\Theta_{e^{\vee}}$ and $\Theta_f$. The kernel of such a ``surjection'' $q: V \rightarrow \mathcal{I}_Z$ is the {\it derived dual} of a twisted ideal sheaf ${\mathcal I}_{Z'} \otimes L$ for the complementary scheme $Z'$ of $Z \subset W$. 

\medskip

\subsection{Ideal sheaves of a single point.} \label{sec:intro-single-point} Consider the class $f = (1,0,-1)$ of an ideal sheaf ${\mathcal I}_p \subset {\mathcal O}_S$ of a point $p\in S$ and the moduli space $S$ of such sheaves. The classes orthogonal to $f$ have the form
$$e = \big(r,-L,\tfrac{1}{2} K_S.L\big),$$
where $r \in {\mathbb N}$ and $L$ is an ample line bundle on $S$.

If $V$ is a vector bundle of class $v = e + f$, it follows that the dual $V^*$ satisfies
$$\operatorname{rk}(V^*) = r + 1, \quad c_1(V^*) = L, \quad \text{and} \quad \chi(V^*) = r.$$
Choose $V^*$ as a general extension $0 \to \OO_S^r \to V^* \to L \otimes \mathcal{I}_W \to 0$, where $W$ is a general zero-dimensional subscheme of length $\chi(L)$, so $L \otimes \mathcal{I}_W$ has no cohomology. Then $h^0(V^*) = r$ and these $r$ sections compute the second Chern class of $V^*$, namely the global section map $\OO_S^r \rightarrow V^*$ drops rank at $c_2(V^*)$ isolated points, and each such point is the single zero of a single section (up to scalar multiples) of $V^*$.
 
Another computation gives
$$c_2(V^*) = \tfrac{1}{2}L.(L - K_S) + 1 = \chi(L) = h^0(L)$$
and
$L = {\mathcal O}(\Theta_{e^{\vee}})$ is the determinant line bundle on $S = M_S(f)$ in this context.
Each of the $\chi(L)$ distinct sections of $V^*$ vanishing at a point of $S$ dualizes to a quotient $V \twoheadrightarrow {\mathcal I}_{p_i} \subset {\mathcal O}_S$ and the length of $\Quot\big(V,(1,0,-1)\big)$ equals $c_2(V^*) =
\chi(L) = h^0(M_S(f), \Theta_{e^{\vee}})$,
proving in this case that the strange duality map is injective.

\medskip

In the case of general $n$ (for 
the class $f = (1,0,-n)$) and general rank $r$ and ample line bundle $L$ (for $e = (r,-L,s)$, with $s$ determined by $r$ and $L$), we will follow a line of reasoning similar to the special case 1.3 to conclude

\begin{thmA} \label{thm:A} For $1 \le n \le 7$, $r \ge 2$, and $L$ sufficiently ample, the dimension $h^0\big(S^{[n]}, \OO(\Theta_{e^{\vee}})\big)$ agrees with the {\it expected length} of the quot scheme $\Quot\big(V,(1,0,-n)\big)$ of quotients of a vector bundle $V$ on $S$ of class $e + (1,0,-n)$.
\end{thmA}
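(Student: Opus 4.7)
The plan is to express both sides of the claimed equality as polynomials in the Chern numbers $L^2$, $L \cdot K_S$, $K_S^2$, and $c_2(S)$ (with coefficients depending on $n$, $r$, $s$), then verify that they coincide for $n \in \{1,\ldots,7\}$. For the Hilbert scheme side, choose $L$ ample enough that the higher cohomology of $\OO(\Theta_{e^{\vee}})$ on $S^{[n]}$ vanishes, so that $h^0 = \chi$. By the cobordism theorem of \cite{ellingsrud_cobordism_2001}, this Euler characteristic is a universal polynomial in the four Chern numbers above; for each fixed $n \le 7$ it can be determined explicitly by evaluating on a handful of test surfaces and solving a linear system, which is carried out in \S 3.

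For the quot scheme side, construct a sufficiently general $V$ by taking $V^*$ as a general extension $0 \to \OO_S^r \to V^* \to L \otimes \mathcal{I}_W \to 0$ with $W$ a general zero-dimensional subscheme of length $\chi(L)$, so that $h^0(V^*) = r$. Following the rank-one and single-point models of \S 1.2 and \S \ref{sec:intro-single-point}, a quotient $V \twoheadrightarrow \mathcal{I}_Z$ of class $(1,0,-n)$ dualizes to data controlled by $V^*$ and its global sections. The $r$ sections assemble into a map from an auxiliary variety built from $V^*$ (via a projectivization or degeneracy-locus construction) to a projective space, and length-$n$ ideal sheaf quotients correspond to $n$-fold point loci of this map. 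The expected length of the quot scheme is then obtained by applying the multiple point formulas of Kazarian, Kleiman, and collaborators; for $n \le 7$ these formulas are rigorously established and expand into a polynomial in the same four Chern numbers.

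The final step is a finite mechanical check verifying that the two polynomials agree for each $n \in \{1, \ldots, 7\}$. The main obstacle is the second step: one must set up the auxiliary map so that the multiple point formulas genuinely apply (including their transversality and genericity hypotheses for $L$ sufficiently ample and $V$ generic), and then unwind them into a polynomial in the Chern numbers directly comparable to the Hilbert scheme output. The restriction $n \le 7$ reflects exactly the current range of rigorously established multiple point formulas, beyond which the right-hand side of the statement would itself rest on a conjectural enumerative input. That the two polynomials arising from such seemingly unrelated sources match for every $n$ in this range is both the conclusion of the theorem and the strongest evidence that the quot scheme construction really is capturing the space of sections on $S^{[n]}$.
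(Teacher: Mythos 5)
There is a genuine gap, and it lies exactly in the step you yourself flag as the main obstacle: the construction of the auxiliary map. You take $V^*$ as a general extension
\[
0 \to \OO_S^r \to V^* \to L \otimes \mathcal{I}_W \to 0
\]
with $|W| = \chi(L)$, so that $h^0(V^*) = r$. This is the $n=1$ setup of \S\ref{sec:intro-single-point} carried over verbatim, and it cannot produce a multiple point count for $n \ge 2$. With $h^0(V^*) = r$ and $\op{rk}(V^*) = r+1$, the evaluation map $H^0(V^*)\otimes\OO_S \to V^*$ is generically injective of non-full rank, so its kernel $G$ is zero; there is no projective bundle $P(G)$, no map $f$ to $P\big(H^0(V^*)\big)=\PP^{r-1}$, and no $n$-fold point locus. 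The paper instead takes $|W| = \chi(L) - (n-1)(r-1)$, which gives $h^0(V^*) = n(r-1)+1$ and makes $V^*$ globally generated (for $(n,r) \notin \{(2,2),(2,3),(3,2)\}$); then $G$ has rank $(n-1)(r-1)-1$, $\dim P(G) = (n-1)(r-1)$, $\dim P(H^0(V^*)) = n(r-1)$, and the codimension of $f\colon P(G)\to P(H^0(V^*))$ is $\ell = r-1$, so the $n$-fold point locus has expected dimension $n(r-1) - n\ell = 0$. The entire dimension count hinges on this choice of $|W|$; your setup makes the construction vacuous rather than merely suboptimal.

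Two further points. First, the three pairs $(n,r) \in \{(2,2),(2,3),(3,2)\}$ have to be excluded from the projective-bundle argument because $V^*$ fails to be globally generated there (Proposition \ref{gen extensions}(b)); the paper handles them separately in \S\ref{s:classical} with the classical Herbert--Ronga and Kleiman formulas on curves and blown-up surfaces mapping to $\PP^2,\PP^3,\PP^4$. Your outline does not distinguish these cases. Second, you assert the multiple point formulas for $n \le 7$ are ``rigorously established,'' but this overstates what is known: the formulas of Kazarian and Marangell--Rim\'anyi are only valid under an \emph{admissibility} hypothesis (Remark \ref{admissible}) that has no algebraic formulation, which is precisely why the theorem is phrased in terms of the \emph{expected} length and why \S\ref{s:genericity} is needed as independent corroboration. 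Your high-level plan---reduce both sides to polynomials in $L^2$, $L.K_S$, $K_S^2$, $\chi(\OO_S)$ and compare---is the right framing and is what the paper does, but the quot-scheme side cannot be carried out from the data you have set up.
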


The proof of Theorem A will unfold across several sections. The case $n=1$ was completed in \S \ref{sec:intro-single-point}. In \S \ref{s:determinant}, we compute $h^0\big(S^{[n]},\OO(\Theta_{e^{\vee}})\big)$ using a generating series of Euler characteristics of line bundles on $S^{[n]}$ provided in \cite{ellingsrud_cobordism_2001}. In \S \ref{s:classical}, we compute the expected length of the quot scheme in the cases $(n,r) \in \{\,(2,2),(3,2),(2,3)\,\}$ using ``classical'' double and triple point formulas of \cite{kleiman_multiple-point_1981}. In \S \ref{s:multiple point formulas}, we perform a similar computation of the expected length of the quot scheme for general pairs $(n,r)$, but here we cannot employ Kleiman's multiple point formulas (\cite{kleiman_multiple-point_1981}) since the relevant maps have corank 2. Instead, we use multiple point formulas obtained by combining results of \cite{kazarian_multisingularities_2003},
\cite{marangell_general_2010}, and \cite{berczi_thom_2012}; a key theorem in \cite{marangell_general_2010} requires the restriction $n \le 7$. Unfortunately, these multiple point formulas (which generalize Kleiman's formulas) are only known to be valid for maps that satisfy certain topological transversality conditions (called ``admissibility'' in \cite{marangell_general_2010}) that have not even been formulated algebraically. The word ``expected'' in the theorem reflects two sources of uncertainty: the admissibility of the map we construct, and the lack of an identification between the $n$-fold point locus of the map and $\op{Quot}\big( V,(1,0,-n) \big)$ that takes into account non-reduced structure. Only the latter is relevant in the three ``classical'' cases.

\medskip

Our second theorem addresses the question begged by the first, namely, the existence of vector bundles $V$ that are sufficiently general so that the quot scheme is in fact finite, reduced, and of the expected length. This a delicate question, even if we focus only on finiteness and reducedness of the quot scheme. Our result is on $S = \PP^2$, but we expect that the conclusion holds on all del Pezzo surfaces.

We consider short exact sequences $0 \to E \to V \to F \to 0$ of sheaves on $\PP^2$ in which $f=\ch(F)=(1,0,-n)$ are the invariants of ideal sheaves $\mathcal{I}_Z$ of $n$ points and $e = \op{ch}(E)$ satisfies $\chi(e,f) = 0$, namely
\[
	e= (r, -\lambda, (n-1)r - \tfrac{3}{2}\lambda)
\]
and thus
\[
v=\op{ch}(V)= e+f = (r+1, -\lambda, (n-1)r-n - \tfrac{3}{2}\lambda).
\]

\begin{thmB}\label{thm:finitequot}
Suppose $n \ge 1$, $r \ge 2$, and $\lambda \gg 0$. Let $V$ be a general stable vector bundle on $\PP^2$ with $\op{ch}(V)=v$ as above. Then the quot scheme $\Quot\big(V,(1,0,-n)\big)$ is finite and reduced, and each quotient is an ideal sheaf of a reduced subscheme.
\end{thmB}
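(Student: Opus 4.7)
The plan is to use an explicit Beilinson-type resolution of a general stable bundle on $\PP^2$ and to reduce the claims about $\Quot(V,(1,0,-n))$ to genericity conditions on the matrix presenting $V$.

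First, for $\lambda \gg 0$, I would establish that a general stable $V$ with $\ch(V) = v$ admits a two-term locally free resolution
\[
0 \longrightarrow \OO(-k-1)^{a} \longrightarrow \OO(-k)^{b} \longrightarrow V \longrightarrow 0
\]
for suitable integers $k, a, b$ with $b - a = r+1$, determined by $\ch(V)$. When $\lambda$ is large, the Beilinson spectral sequence applied to $V(k)$ degenerates into such a resolution, and cokernels of general injective maps $\phi : \OO(-k-1)^a \to \OO(-k)^b$ sweep out a dense open subset of the Gieseker moduli space $M_{\PP^2}(v)$; so one is free to assume $V$ arises as such a cokernel. (When the arithmetic does not accommodate a two-term resolution, an analogous Gaeta-type resolution involving $\OO(-k-2)$ should work with the argument below unchanged.)

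Next, given a quotient $q : V \twoheadrightarrow F$ with $\ch(F) = (1,0,-n)$, I note that $F^{\vee\vee}$ is reflexive of rank one on $\PP^2$ with trivial first Chern class, hence $F^{\vee\vee} \cong \OO$. So once torsion in $F$ is excluded, the natural map identifies $F$ with the ideal sheaf $\mathcal{I}_Z$ of a length-$n$ subscheme $Z$. Composing the resolution with $q$ yields a surjection $\OO(-k)^{b} \twoheadrightarrow \mathcal{I}_Z$ whose pullback along $\phi$ vanishes, and applying $\Hom(-,\mathcal{I}_Z)$ to the resolution expresses $\Hom(E,\mathcal{I}_Z)$ (for $E = \ker q$) as the kernel of an explicit map
\[
H^0(\mathcal{I}_Z(k))^{\oplus b} \longrightarrow H^0(\mathcal{I}_Z(k+1))^{\oplus a}
\]
induced by $\phi$, using that for $k \gg 0$ the higher cohomology of $\mathcal{I}_Z(k+j)$ vanishes for $j \ge 0$.

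Finally, I would argue globally on the incidence variety of triples $(\phi, Z, q)$: the loci where $F$ has torsion, where $Z$ is non-reduced, or where $\Hom(E,\mathcal{I}_Z) \ne 0$ are each closed and cut out by explicit determinantal or cohomological conditions on $\phi$. Bounding their codimensions and projecting to the space of $\phi$'s shows that for a general $\phi$ every induced quotient is good; finiteness and reducedness of $\Quot(V,(1,0,-n))$ then follow from $\hom(E,F) = 0$ at every point. The main obstacle I anticipate is this codimension estimate: because the subschemes $Z$ arising as quotients of a fixed $V$ are only finitely many, one cannot freely apply genericity to $Z$, and the bad loci must be controlled through a careful analysis of the matrices $\phi$ --- perhaps by induction on $n$ or by specialization to a small case where the relevant determinantal loci are explicit.
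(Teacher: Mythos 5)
Your proposal has the right flavor — resolutions, an incidence variety, a dimension count — but it stops exactly where the paper begins to do real work, and the ingredient it is missing is not a codimension estimate but an explicit construction.

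Concretely, the paper resolves the \emph{dual} $V^*$ by a three-term Gaeta-type resolution $0 \to \OO(-2)^C \to \OO(-1)^B \oplus \OO^A \to V^* \to 0$ (Proposition \ref{p:EV_res}, via the exceptional-bundle theory of \cite{coskun_effective_2014}); the presence of the $\OO^A$ summand is structurally essential, because a quotient $V \twoheadrightarrow \mathcal{I}_Z$ factors $V \to \OO^A \to \OO \supset \mathcal{I}_Z$, which is what makes the incidence variety $I_{r,\lambda,n}$ of diagrams $(\star)$ tractable. Your two-term $0 \to \OO(-k-1)^a \to \OO(-k)^b \to V \to 0$ (even as a placeholder hedged by ``a Gaeta-type resolution should work'') does not expose this structure, and in fact for $\lambda\gg 0$ the right object to resolve is $V^*$, not $V$.

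The deeper gap is the one you flag yourself. A dimension count on the incidence variety bounds its dimension and can show the bad loci (torsion quotients, non-reduced $Z$, $\hom(E,\mathcal{I}_Z)\ne 0$) have smaller dimension; but by itself this does not show the projection to the resolution space $R(v)$ is dominant, nor that its generic fiber is nonempty, finite, and reduced. The paper gets around this by (i) constructing by hand a specific $V^*$ (as an extension of a modified $E^*$ by $\OO$) with an isolated reduced point $[V\twoheadrightarrow\mathcal{I}_Z]$ in the quot scheme, using Lemma \ref{l:theta} to arrange $\hom(E,\mathcal{I}_Z)=0$, and then (ii) running a deformation argument on the relative quot scheme $\mathcal{Q}=\Quot(\mathcal{V},(1,0,-n))$ over $R(v)$: the component through the seed point has dimension $\ge \dim R(v)$, semicontinuity yields an open $\mathcal{U}\subset\mathcal{Q}$ where $\mathcal{Q}\to R(v)$ is \'etale, $\mathcal{U}$ is shown to consist only of ideal-sheaf quotients (using Lemma \ref{l:quotients} and the resolution of $V$), and finally $\mathcal{U}$ is matched with the unique top-dimensional component of $I_{r,\lambda,n}$ from the dimension count. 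Your proposal never produces the seed point, so there is no way to start the deformation, and ``bounding codimensions and projecting'' cannot by itself establish dominance. In short: the missing idea is the explicit isolated quotient and the \'etale/deformation bridge between the dimension count and the conclusion, and that is precisely the content of parts (c) and (d) of Proposition \ref{p:incidence}.
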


\S \ref{s:genericity} contains the proof of Theorem B, which uses the fact that the duals of general stable vector bundles $V$ with $\ch(V)=v$ have resolutions of the form
\[
	0 \to \OO(-2)^C \to \OO(-1)^B \oplus \OO^A \to V^* \to 0.
\]
Working with resolution spaces instead of moduli of stable sheaves allows us to sidestep questions of stability that arise when studying these quot schemes, such as whether the general quotient $V \twoheadrightarrow \mathcal{I}_Z$ has a stable kernel. We also use resolutions of this form to deduce a statement of general interest about when general stable bundles on $\PP^2$ are globally generated, which we could not find in the literature:
\begin{prop*}[Proposition \ref{p:gg}] Let $\xi=(r,\lambda,d)$ be a Chern character such that $r \ge 1$, $\lambda \ge 0$, and $\chi(\xi) \ge r+2$. Then general sheaves in $M(\xi)$ are globally generated.
\end{prop*}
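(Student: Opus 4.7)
The plan is to show that a general stable $E \in M(\xi)$ is a quotient of a globally generated sheaf, from which the conclusion is immediate. I would proceed in three steps.

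First, I would establish cohomology vanishing: for general $E \in M(\xi)$, $H^1(E) = H^2(E) = 0$, so that $h^0(E) = \chi(\xi) \ge r+2$. This follows from upper semi-continuity of cohomology in a flat family, once one exhibits a single stable sheaf of class $\xi$ with these vanishings.

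Second, in the spirit of the resolution used in the proof of Theorem B, I would show that a general $E \in M(\xi)$ admits a short exact sequence
\[
0 \to \OO(-2)^C \to \OO(-1)^B \oplus \OO^A \to E \to 0,
\]
with $A = \chi(\xi)$, $B = 2r + \lambda - 2\chi(\xi)$, and $C = r + \lambda - \chi(\xi)$. For ranges of $\xi$ in which one of these multiplicities would be negative (e.g., when $\chi(\xi) > r + \lambda/2$), one replaces the relevant summand with a positive twist (for instance, $\OO(1)$ in place of $\OO(-1)$), but the logic is the same. The key point is that the composition $\OO^A \hookrightarrow \OO(-1)^B \oplus \OO^A \twoheadrightarrow E$ has cokernel isomorphic to $\OO(-1)^B / \phi(\OO(-2)^C)$, where $\phi \colon \OO(-2)^C \to \OO(-1)^B$ is the restriction of the first differential. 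Now $\phi$ is represented by a $B \times C$ matrix of linear forms on $\PP^2$, and the hypothesis $\chi(\xi) \ge r+2$ translates exactly into $C - B \ge 2$, so the generic rank-drop locus of $\phi$ has expected codimension $C - B + 1 \ge 3 > \dim \PP^2$ and hence is empty. Thus $\phi$ is surjective as a sheaf morphism, $\OO^A \to E$ is then surjective, and $E$ is globally generated as a quotient of $\OO^A$.

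The main obstacle is in the second step: verifying that a general stable $E$ really does admit such a resolution. Concretely, one needs a dimension count on the parameter space of resolution data $(\OO(-2)^C,\ \OO(-1)^B \oplus \OO^A,\ d_{-1})$ modulo the automorphisms of the source and target, together with a proof that the resulting forgetful map to $M(\xi)$ is dominant. This is handled by an argument parallel to the one for dual bundles in the proof of Theorem B. The bookkeeping required to treat the cases where the natural resolution has a negative multiplicity, by switching to a resolution with positive twists on the right, is secondary but must be carried out carefully case by case.
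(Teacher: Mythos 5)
Your argument in the ``main case'' is essentially the same as the paper's: restrict a general resolution $0 \to \OO(-2)^c \to \OO(-1)^b \oplus \OO^a \to E \to 0$ to the summand $\OO^a$, observe via the snake lemma that the cokernel of $\OO^a \to E$ is the cokernel of a general map $\OO(-2)^c \to \OO(-1)^b$, and kill that cokernel using the codimension of the rank-drop locus (equivalently, $c \ge b+2$). That part is right, and it is the paper's argument when the ``corresponding exceptional slope'' $\gamma$ of $\xi$ is $0$, i.e.\ when the $(\dagger)$-resolution of Proposition \ref{p:gen_res} actually applies.

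The gap is in your proposed handling of the remaining invariants. You assert that whenever a multiplicity would come out negative you ``replace the relevant summand with a positive twist (for instance, $\OO(1)$ in place of $\OO(-1)$), but the logic is the same.'' That is not what happens. The resolutions of general stable sheaves on $\PP^2$ are governed by the exceptional-bundle machinery of Dr\'ezet and Coskun--Huizenga (Proposition \ref{p:resolution} in the paper): the resolving objects are exceptional bundles $E_\delta$ attached to the triad determined by the corresponding exceptional slope $\gamma$, and for $\gamma \ne 0$ these are typically of rank $>1$, not twisted line bundles. You cannot simply swap $\OO(-1)$ for $\OO(1)$. The paper splits into three cases by $\gamma$: it shows $\gamma > 0$ is excluded by $\chi(\xi)\ge 0$, runs your/its snake-lemma argument when $\gamma = 0$, and when $\gamma < 0$ observes instead that $G$ is a quotient $E_{-\beta}^{m_2} \twoheadrightarrow G$ of an exceptional bundle of non-negative slope, which is then shown to be globally generated by a separate inductive lemma (using Dr\'ezet's evaluation-surjectivity). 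That last lemma has no analogue in your sketch, and without it the $\gamma<0$ case is open. Also, your first step (cohomology vanishing) is redundant once the resolution exists, and it is not free a priori for all $\xi$ in range, so it does not help bridge the gap.
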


To relate Theorems A and B, we use the resolutions above to observe that general stable $V^*$ arise as general extensions
\[
	0 \to \OO^r \to V^* \to \OO(\lambda) \otimes \mathcal{I}_W \to 0
\]
for general zero-dimensional subschemes $W$ (Corollary \ref{c:same_V*}). These extensions are exactly the vector bundles constructed in the proof of Theorem A. Thus we can combine Theorems A and B to get

\begin{cor}\label{c:SD_injective} With the hypotheses in Theorem B and $e = \big(r,-\lambda,(n-1)r-\tfrac{3}{2}\lambda\big)$, the rank of the strange duality map
\[
\operatorname{SD}_{e,(1,0,-n)}: H^0\big((\PP^2)^{[n]}, \OO(\Theta_{e^{\vee}})\big)^* \rightarrow H^0\big(M(e^{\vee}), \OO(\Theta_{(1,0,-n)})\big)
\]
is bounded below by the length of $\op{Quot}\big(V,(1,0,-n)\big)$. Moreover,
\begin{enumerate}[(a)]
\item For $n=1$ and the ``classical'' cases $(n,r) \in \{\, (2,2),(2,3),(3,2)  \,\}$, $\operatorname{SD}_{e,(1,0,-n)}$ is injective;
\item When $n \le 7$, injectivity of $\op{SD}_{e,(1,0,-n)}$ is predicted by Theorem A.
\end{enumerate}
\end{cor}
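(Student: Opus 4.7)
The plan is to combine Theorems A and B through the linear-independence argument sketched in the introduction, using Corollary~\ref{c:same_V*} to match the vector bundles appearing in the two constructions.

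First, I would apply Theorem B to produce a general stable vector bundle $V$ on $\PP^2$ with $\op{ch}(V)=e+(1,0,-n)$ whose quot scheme $\Quot(V,(1,0,-n))$ is finite and reduced, with each quotient an ideal sheaf $\mathcal{I}_{Z_i}$ of a reduced zero-dimensional subscheme. Writing $0\to E_i\to V\to\mathcal{I}_{Z_i}\to 0$, reducedness at each closed point together with $\chi(E_i,\mathcal{I}_{Z_i})=0$ forces $\ext^j(E_i,\mathcal{I}_{Z_i})=0$ for all $j$. By Corollary~\ref{c:same_V*}, such a general $V$ may be presented as an extension $0\to\OO^r\to V^*\to\OO(\lambda)\otimes\mathcal{I}_W\to 0$ for general $W$, which identifies it with the bundles constructed in the proof of Theorem A.

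Next, I would run the linear-independence step. Distinct points of $\Quot(V,(1,0,-n))$ correspond to distinct rank-$r$ subsheaves $E_i\subset V$ of Chern character $e$. For $i\ne j$, the composition $E_i\hookrightarrow V\twoheadrightarrow\mathcal{I}_{Z_j}$ cannot vanish: otherwise $E_i$ would sit inside $E_j$, and since $E_i$ and $E_j$ are torsion-free of the same rank and Hilbert polynomial, the quotient $E_j/E_i$ would be a torsion subsheaf of the torsion-free $\mathcal{I}_{Z_i}$, forcing $E_i=E_j$ and $i=j$. Hence $\hom(E_i,\mathcal{I}_{Z_j})>0$ for $i\ne j$ and equals $0$ for $i=j$. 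Pairing against the strange-duality sections, the element of $H^0(M(e^\vee),\OO(\Theta_{(1,0,-n)}))$ obtained from the functional dual to $[\mathcal{I}_{Z_i}]\in S^{[n]}$ vanishes at $[E_j^*]$ for $j\ne i$ but not at $[E_i^*]$. The resulting evaluation matrix is diagonal and nonsingular, so the sections are linearly independent and the rank of $\op{SD}_{e,(1,0,-n)}$ is at least $\op{length}(\Quot(V,(1,0,-n)))$.

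Parts (a) and (b) now follow by saturating this lower bound against the source dimension. For (a), the case $n=1$ was handled in \S\ref{sec:intro-single-point}, and for the classical cases $(n,r)\in\{(2,2),(3,2),(2,3)\}$, Theorem A combined with the double- and triple-point formulas of \S\ref{s:classical} gives $\op{length}(\Quot(V,(1,0,-n)))=h^0(S^{[n]},\OO(\Theta_{e^\vee}))$ unconditionally, so the lower bound forces injectivity. For (b), the same argument yields injectivity for $1\le n\le 7$ conditional on the admissibility hypothesis of \S\ref{s:multiple point formulas} and on the identification of the expected length of the quot scheme with its actual length, explaining the word ``predicted.''

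The main technical subtlety I anticipate is ensuring that the $[E_i^*]$ actually define points of $M(e^\vee)$ at which the sections can be evaluated, i.e., Gieseker-semistability of the duals of the kernels. Since Theorem B is proved on resolution spaces rather than moduli of stable sheaves, this stability is deliberately sidestepped there, and the evaluation step requires supplementation — most naturally through a genericity argument on the resolution parameters of $V^*$ guaranteeing that the generic kernel is semistable, or through a direct $\Hom$-vanishing computation read off from the three-term resolution.
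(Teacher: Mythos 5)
Your proposal reconstructs the argument the paper's introduction sketches and then applies Theorems~A and~B in the intended way; in particular the torsion-freeness argument that $\hom(E_i,\mathcal{I}_{Z_j})>0$ for $i\ne j$ is a correct and mild simplification of the paper's appeal to stability (if the composite $E_i\hookrightarrow V\twoheadrightarrow\mathcal{I}_{Z_j}$ were zero, then $E_i\subseteq E_j$ and the rank-zero sheaf $E_j/E_i$ would embed in the torsion-free $\mathcal{I}_{Z_i}=V/E_i$, forcing $E_i=E_j$).

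The subtlety you flag at the end is a genuine gap, and it is not resolved by the paper either. Evaluating $\Theta_{\mathcal{I}_{Z_i}}$ at $[E_j^*]$ requires $E_j^*$ to lie in $M(e^\vee)$, i.e.\ the kernels must be Gieseker-semistable, but Theorem~B is proved on the resolution space $R(v)$ precisely to avoid the question of ``whether the general quotient $V\twoheadrightarrow\mathcal{I}_Z$ has a stable kernel'' (as the paper says explicitly in \S1), and no supplementary argument appears where Corollary~\ref{c:SD_injective} is asserted. As written, your argument (and the paper's implicit one) only establishes rank $\geq$ the number of quot points whose kernels happen to be semistable, which falls short of ``bounded below by the length of $\Quot(V,(1,0,-n))$'' without an extra genericity argument on the resolution side, or an alternative way to distinguish the sections $\Theta_{\mathcal{I}_{Z_i}}$ that avoids pointwise evaluation on $M(e^\vee)$. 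You are right to name this, but it should be presented as an unresolved step, not a technicality to be patched. One refinement for part~(a): unconditionality in the classical cases rests on Theorem~B's reducedness closing the second ``expected length'' uncertainty of Theorem~A (the scheme-theoretic identification of the $n$-fold point locus with the quot scheme), alongside the practical $k$-genericity verified in \S\ref{s:classical}; you should cite both, not Theorem~A alone.
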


In \S6, we provide references to other strange duality results on surfaces and speculate about possible ways to generalize the results in this paper. In particular, we would like to conclude that the strange duality map is injective for $n \le 7$. This would require either more rigorous multiple point formulas or some other method for computing lengths of finite quot schemes. We are also not satisfied with only ``half'' of the strange duality conjecture. The other half would follow from a computation of $h^0\big(M(e^{\vee}),\OO(\Theta_{(1,0,-n)})\big)$, but this is difficult since the moduli space $M(e^{\vee})$ is more mysterious than the Hilbert scheme of points.

\section{Determinant Bundles on Hilbert Schemes}\label{s:determinant}
When $f=(1,0,-n)$ and $M_S(f)$ is the Hilbert scheme $S^{[n]}$, we can identify the determinant line bundle $\OO(\Theta_{e^{\vee}})$ explicitly. We first recall the standard description of the Picard group of $S^{[n]}$. Given a line bundle $L$ on $S$, one constructs a line bundle $L_n$ on $S^{[n]}$ as follows. The line bundle $\bigotimes_{i=1}^n \pi_i^*(L)$ on $S^n$ (where $\pi_i \colon S^n \rightarrow S$ is the $i$th projection) comes equipped with an action of the permutation group $\mathfrak S_n$, so it descends to a line bundle on the symmetric product $S^{(n)}$, which can be pulled back to $S^{[n]}$ using the Hilbert-Chow morphism.
For an effective divisor $D$, $D_n$ can be thought of as the locus of subschemes whose support meets $D$. This process gives a map $\Pic(S) \rightarrow \Pic(S^{[n]})$ that induces an isomorphism
\[
 \Pic(S) \oplus \mathbb Z \left[\mathcal O_{S^{[n]}}\left(\tfrac{B}{2}\right)\right] \cong \Pic(S^{[n]}),  
\]
where $B$ is the exceptional divisor of the Hilbert-Chow morphism.

We can parametrize Chern characters $e$ orthogonal to $f = (1,0,-n)$ as
\[
	e = (r, -L, (n-1)r + \tfrac{1}{2}L.K_S).
\]
A sheaf $\hat{E}$ on $S$ with $\ch(\hat{E})=e^{\vee}$ induces a determinant line bundle on $S^{[n]}$ whose class is 
\[
\Theta_{\hat{E}}=-c_1\big(Rq_*(p^*\hat{E} \otimes \mathcal{I}_{\mathcal Z})\big) = c_1(\hat{E})_n - \op{rk}(\hat{E}) \tfrac{B}{2} = L_n - r \tfrac{B}{2},
\]
where $S^{[n]} \xleftarrow{q} S^{[n]} \times S \xrightarrow{p} S$ are the projections, $\mathcal{Z} \subset S^{[n]} \times S$ is the universal subscheme, and the explicit formula on the right is obtained by Grothendieck-Riemann-Roch (see Lemma 3.5 of \cite{wandel_stability_2013} or \cite{ellingsrud_cobordism_2001} \S5). Since the Picard group of a del Pezzo surface is discrete, the line bundle determined by $\Theta_{\hat{E}}$ depends only on $e^{\vee} = \op{ch}(\hat{E})$, so it makes sense to write
\[
	\OO(\Theta_{e^{\vee}}) = \OO_{S^{[n]}}(L_n - r\tfrac{B}{2}).
\]

To investigate strange duality for Hilbert schemes of surfaces, we need to compute the number of sections of $\OO(\Theta_{e^{\vee}})$. The Euler characteristic can be computed using
\begin{thm}[\cite{ellingsrud_cobordism_2001}, Theorem 5.3] \label{thm:chi_gen}
For any surface $S$,
\[
 \sum_{n\ge 0} \chi\left(\OO_{S^{[n]}}(L_n - r\tfrac{B}{2})\right)z^n = g_r(z)^{\chi(L)} \cdot f_r(z)^{\tfrac{1}{2}\chi(\OO_S)} \cdot A_r(z)^{L.K_S-\tfrac{1}{2}K_S^2}\cdot B_r(z)^{K_S^2},
\]
where $A_r(Z)$, $B_r(Z)$, $f_r(z)$, $g_r(z)$ are power series in $z$ depending only on $r$, and
\begin{align*}
f_r(z) &= \sum_{k\ge0} \binom{(1-r^2)(k-1)}{ k} z^k \\
g_r(z) &= \sum_{k\ge0} \frac{1}{1-(r^2-1)k}\binom{1-(r^2-1)k}{k} z^k.
\end{align*}
\end{thm}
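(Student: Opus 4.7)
The plan is to follow a universality and cobordism strategy. First I would apply Hirzebruch--Riemann--Roch to rewrite
\[
\chi\bigl(\OO_{S^{[n]}}(L_n - r\tfrac{B}{2})\bigr) = \int_{S^{[n]}} \ch\bigl(\OO(L_n - r\tfrac{B}{2})\bigr) \cdot \td(T_{S^{[n]}}),
\]
and observe that both factors in the integrand are universal polynomials in tautological classes on $S^{[n]}$ built from $L$, $T_S$, and the boundary class $B/2$ (which is, up to sign, $c_1$ of the tautological rank-$n$ bundle $\OO_S^{[n]}$). The tangent bundle $T_{S^{[n]}}$ itself has Chern classes expressible in terms of $c_1(T_S)$ and $c_2(T_S)$ via Lehn's tautological formalism.

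Next I would invoke the universality principle: the integral over $S^{[n]}$ of any such universal tautological polynomial is a polynomial in the four numerical invariants $\chi(L)$, $\chi(\OO_S)$, $L.K_S$, and $K_S^2$ of the pair $(S,L)$. Because the integrand in question is also multiplicative in the cobordism-theoretic sense under ``disjoint unions'' of pairs $(S,L)$, the generating series
\[
Z_r(z) := \sum_{n \ge 0} \chi\bigl(\OO_{S^{[n]}}(L_n - r\tfrac{B}{2})\bigr)\, z^n
\]
must take the form $F_1(z)^{\chi(L)} \cdot F_2(z)^{\chi(\OO_S)/2} \cdot F_3(z)^{L.K_S - K_S^2/2} \cdot F_4(z)^{K_S^2}$ for four universal power series $F_i \in \mathbb Q[[z]]$ depending only on $r$. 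This supplies the \emph{shape} of the formula without yet identifying the individual series.

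Finally, I would pin down the $F_i$ by evaluating $Z_r(z)$ on enough explicit test pairs $(S,L)$. Taking $S$ to be a K3 surface kills $K_S$ and isolates the factors involving $F_1 = g_r$ and $F_2 = f_r$; varying $L$ between trivial and nontrivial line bundles on K3 separates the two. Then $S = \PP^2$ and $S = \PP^1 \times \PP^1$ with varying $L$ fix $F_3 = A_r$ and $F_4 = B_r$, since all four invariants can be independently controlled across those examples. The low-order coefficients of $g_r$ and $f_r$ can be computed directly via Bott residue localization on toric degenerations or via explicit tautological calculus, and the indicated binomial closed forms would then emerge by recognizing the sequences as Lagrange-inversion expansions of an implicit functional equation in $g_r$.

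The main obstacle is this last step. Universality only guarantees the \emph{existence} of the $F_i$, and the dependence on $r$ enters nonlinearly through the $-rB/2$ twist, so a naive fixed-point computation on $K3^{[n]}$ or $(\PP^2)^{[n]}$ balloons rapidly with $n$. Recognizing the precise binomial patterns --- in particular the somewhat miraculous shifts $(1-r^2)(k-1)$ in $f_r$ and $1-(r^2-1)k$ in $g_r$ --- requires either a clean generating-function identity that can be bootstrapped from a handful of small cases, or a slick equivariant localization argument on a smooth equivariant model. This combinatorial identification, rather than the cobordism-theoretic reduction, is where the real work of the theorem lives.
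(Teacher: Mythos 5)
This theorem is cited by the paper from Ellingsrud--G\"ottsche--Lehn (Theorem 5.3 of \cite{ellingsrud_cobordism_2001}); the paper under review does not prove it but simply invokes it, and then uses localization to numerically expand $A_r$ and $B_r$, which have no closed form. So there is no in-paper argument to compare against. Your proposal must therefore be measured against the source of the citation.

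Your first two steps are correct and match the EGL strategy: the Hirzebruch--Riemann--Roch integrand is a universal tautological expression, and the cobordism/universality theorem of \cite{ellingsrud_cobordism_2001} (their Theorem 4.2) does yield the four-fold multiplicative structure $F_1^{\chi(L)} F_2^{\chi(\OO_S)/2} F_3^{L.K_S - K_S^2/2} F_4^{K_S^2}$. But step three leaves a genuine gap, and you partly acknowledge it. Universality reduces the identification of $g_r$ and $f_r$ to one good test case with $K_S=0$, but it does not tell you what the Euler characteristics of $\OO_{S^{[n]}}(L_n - r\tfrac{B}{2})$ \emph{are} in that case. Saying ``compute low-order coefficients by Bott localization and then recognize the sequence'' is not a proof of the closed forms; numerical matching is exactly what the paper's authors do for $A_r$ and $B_r$, precisely because no closed form is available. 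The reason EGL \emph{can} give closed forms for $g_r$ and $f_r$ is that for surfaces with $K_S = 0$ there is a prior, independent input pinning down the series: the relevant Euler characteristics on Hilbert schemes of K3 or abelian surfaces were already available in closed form, and the binomial expressions are then read off (indeed, $g_r$ is the generalized binomial series satisfying $g_r = 1 + z\, g_r^{1-r^2}$, and $f_r$ is an algebraic function of $g_r$, as your Lagrange-inversion instinct suggests --- but noting that these series \emph{admit} a Lagrange-inversion description is not the same as \emph{deriving} them). Without supplying that explicit known computation on a $K$-trivial surface, your step three merely re-expresses the theorem as an unverified guess. That missing input, not the cobordism reduction, is the content of the closed formulas.

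One smaller caution in your step three: you propose K3 with $L$ trivial versus nontrivial to separate $g_r$ from $f_r$. This works in principle since $\chi(L)$ varies, but the cleaner and standard choice is an abelian surface, where $\chi(\OO_S)=0$ as well as $K_S=0$, so the generating function collapses to $g_r^{\chi(L)}$ outright and $f_r$ can then be isolated on K3 in a second step.
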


Explicit formulas for $A_r(z)$ and $B_r(z)$ are not known, but as explained in \cite{ellingsrud_cobordism_2001}, one can use the localization techniques of \cite{ellingsrud_botts_1996} and \cite{ellingsrud_homology_1987} to determine the first few terms of the power series on the left side for $\PP^2$ and any $r$ and $L$. Substituting appropriate choices of $L$, one can solve for the first few terms of $A_r(z)$ and $B_r(z)$. In \cite{ellingsrud_cobordism_2001}, these power series are computed up to order 5. This was not sufficient for our purposes, so we implemented the suggested computations in Sage
(\cite{developers_sagemath_2015}). We used up to order 7 in this paper (although our code can compute more):
\begin{align*}
A_r(z) &= 1 + \left(-\tfrac{1}{6} r^{3} + \tfrac{1}{6} r\right)z^{2} +
\left(\tfrac{17}{40} r^{5} - \tfrac{5}{8} r^{3} + \tfrac{1}{5}
r\right)z^{3} + 
\left(-\tfrac{631}{630} r^{7} + \tfrac{1}{72} r^{6} \right.\\
&+\left.\tfrac{88}{45} r^{5} - \tfrac{1}{36} r^{4} - \tfrac{209}{180} r^{3} +
\tfrac{1}{72} r^{2} + \tfrac{29}{140} r\right)z^{4} +
\left(\tfrac{171215}{72576} r^{9} - \tfrac{17}{240} r^{8} -
\tfrac{69619}{12096} r^{7} \right.\\
&\left.+ \tfrac{7}{40} r^{6} + \tfrac{16979}{3456}
r^{5} - \tfrac{11}{80} r^{4} - \tfrac{31259}{18144} r^{3} + \tfrac{1}{30}
r^{2} + \tfrac{13}{63} r\right)z^{5} +
\left(-\tfrac{18684667}{3326400}
r^{11} \right.\\
&\left.+ \tfrac{155581}{604800} r^{10} + \tfrac{597209}{36288} r^{9} -
\tfrac{1699}{2240} r^{8} - \tfrac{5513891}{302400} r^{7} +
\tfrac{23033}{28800} r^{6} + \tfrac{114685}{12096} r^{5} \right.\\
&\left.-\tfrac{2669}{7560} r^{4} - \tfrac{519509}{226800} r^{3} + \tfrac{229}{4200}
r^{2} + \tfrac{281}{1386} r\right)z^{6} + \left(\tfrac{401297449}{29652480} r^{13} - \tfrac{8914439}{10886400}
r^{12} \right.
\\ &\left.- \tfrac{528153667}{11404800} r^{11} + \tfrac{30585833}{10886400}
r^{10} + \tfrac{4352347}{69120} r^{9} - \tfrac{13405099}{3628800} r^{8} -\tfrac{44899771}{1036800} r^{7} \right. \\
&\left.+ \tfrac{25156259}{10886400} r^{6} +
\tfrac{817639}{51840} r^{5} - \tfrac{1859441}{2721600} r^{4} -
\tfrac{339287}{118800} r^{3} + \tfrac{1433}{18900} r^{2} + \tfrac{85}{429}
r\right)z^{7} + \cdots \\ \vspace{.5cm}
B_r(z) &= 1 + \left(-\tfrac{1}{24} r^{4} + \tfrac{1}{24} r^{2}\right)z^{2} +
\left(\tfrac{97}{720} r^{6} - \tfrac{31}{144} r^{4} + \tfrac{29}{360}
r^{2}\right)z^{3} + \\
&\left(-\tfrac{14899}{40320} r^{8} + \tfrac{2273}{2880}
r^{6} - \tfrac{3053}{5760} r^{4} + \tfrac{139}{1260} r^{2}\right)z^{4} + \\
&\left(\tfrac{503377}{518400} r^{10} - \tfrac{311701}{120960} r^{8} +
\tfrac{421267}{172800} r^{6} - \tfrac{6257}{6480} r^{4} + \tfrac{187}{1400}
r^{2}\right)z^{5} + \\
&\left(-\tfrac{1205178661}{479001600} r^{12} +
\tfrac{346550543}{43545600} r^{10} - \tfrac{19975933}{2073600} r^{8} +
\tfrac{241348529}{43545600} r^{6} - \tfrac{4092191}{2721600} r^{4} \right. \\
&\left.+
\tfrac{9047}{59400} r^{2}\right)z^{6} + \left(\tfrac{1571744023}{242161920} r^{14} -
\tfrac{11403389887}{479001600} r^{12} + \tfrac{1523544803}{43545600}
r^{10} \right. \\
&\left.- \tfrac{2666500579}{101606400} r^{8} + \tfrac{458713229}{43545600}
r^{6} - \tfrac{63757807}{29937600} r^{4} + \tfrac{6349289}{37837800}
r^{2}\right)z^{7} + \cdots
\end{align*}
As far as we are aware, it remains an open problem to determine whether closed formulas for these series exist.

With these series in hand and a knowledge of the intersection theory on del Pezzo surfaces, it is now straightforward to extract the Euler characteristic of any line bundle on $S^{[n]}$. For example, the formula for $n=2$ is
\[
\chi\big(\OO_{S^{[2]}}(L_2 - r\tfrac{B}{2})\big) = \binom{\chi(L)}{2} - (r^2-1)\chi(L) - \binom{r+1}{3}L.K_S - \binom{r+1}{4}K_S^2 + \tfrac{1}{2}\binom{r^2}{2}
\]
and the formula for $n=3$ and $r=2$ is
\[
	\chi\big(\OO_{S^{[3]}}(L_3 - B)\big) = \binom{\chi(L)}{3} - \chi(L)(3 \chi(L)+L.K_S-21)+9L.K_S+K_S^2-28.
\]

Comparing a result of Bertram and Coskun (\cite{bertram_birational_2013-1} Theorem 2.4) on the nef cone of a del Pezzo surface with the conditions in \cite{di_rocco_kvery_1996} for a line bundle on a del Pezzo surface to be $k$-very ample shows
\begin{prop} $\Theta_{e^{\vee}}$ is nef if and only if $L$ is $(n-1)r$-very ample.\footnote{When $S$ is the blow-up of $\PP^2$ at 8 points, the $(n-1)r$-very ample condition is not quite sufficient since the anticanonical curve imposes an additional condition on the nef cone of $S^{[n]}$ (see \cite{bertram_birational_2013-1} Remark 2 or \cite{bolognese_nef_2016}).}
\end{prop}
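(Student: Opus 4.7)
The plan is to derive this as a direct numerical comparison between the two cited theorems. Since $\Theta_{e^\vee} = L_n - r\tfrac{B}{2}$, I need to compare the condition for such a class to be nef on $S^{[n]}$ (given by Bertram--Coskun's Theorem 2.4) with the condition on $L$ to be $k$-very ample on $S$ (given by Di Rocco) for $k = (n-1)r$.

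First, I would write down the Bertram--Coskun description: for a del Pezzo surface $S$, they identify the walls of the nef cone of $S^{[n]}$ inside $\op{Pic}(S) \oplus \mathbb{Z}[\OO(\tfrac{B}{2})]$ in terms of inequalities that a pair $(L, m)$ must satisfy for $L_n - m\tfrac{B}{2}$ to be nef. These inequalities are of the form $L \cdot C \geq m \cdot (\text{some function of } n, C)$ as $C$ runs over the extremal effective classes on $S$ (together with possibly an anti-canonical condition). Specializing to $m = r$, I would collect the resulting inequalities on $L$.

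Second, I would invoke Di Rocco's criterion, which characterizes $k$-very ampleness of a line bundle $L$ on a del Pezzo surface by explicit intersection number inequalities $L \cdot C \geq k \cdot (\text{same function of } C)$ against the same list of extremal curve classes. Setting $k = (n-1)r$, the bookkeeping aligns so that each Bertram--Coskun inequality at level $m = r$ matches a Di Rocco inequality at level $k = (n-1)r$, making the two systems of conditions equivalent.

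The main obstacle is simply the bookkeeping: ensuring that the extremal classes appearing in Bertram--Coskun's description and in Di Rocco's criterion are the same and that the coefficients match up after the substitutions $m = r$, $k = (n-1)r$. The one place where this matching genuinely fails is the blowup of $\PP^2$ at $8$ points, where Bertram--Coskun's nef cone is cut out by an additional inequality coming from the unique anti-canonical curve which is not visible in Di Rocco's $k$-very ampleness criterion; this is precisely the exception flagged in the footnote and imposes the proviso stated there, but in all other cases the equivalence follows from this direct comparison.
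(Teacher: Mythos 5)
Your proposal is exactly the paper's approach: the paper itself simply states the proposition as the result of comparing Bertram--Coskun's Theorem 2.4 with Di Rocco's $k$-very ampleness criterion, with the footnote flagging the $S_8$ exception. Your fleshing-out of the bookkeeping ($m=r$ against $k=(n-1)r$, running over the same list of extremal curve classes) is the intended argument.
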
 
Moreover, subtracting $K_{S^{[n]}} = (K_S)_n$ from a nef $\Theta_{e^{\vee}}$ makes the inequalities defining the nef cone in \cite{bertram_birational_2013-1} strict, namely $\Theta_{e^{\vee}} - K_{S^{[n]}}$ is ample. Thus Kodaira Vanishing guarantees  that the sections of $\OO(\Theta_{e^{\vee}})$ are being counted by the Euler characteristics in the power series above:

\begin{cor}\label{c:euler_counts_sections}
If $L$ is $(n-1)r$-very ample, then
\[
h^0\big(\OO_{S^{[n]}}(L_n - r\tfrac {B}{2})\big) = \chi\big(\OO_{S^{[n]}}(L_n - r\tfrac{B}{2})\big).
\]
\end{cor}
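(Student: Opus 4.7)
The plan is to deduce $h^0 = \chi$ from Kodaira vanishing applied to the smooth projective variety $S^{[n]}$ and the line bundle $\OO(\Theta_{e^\vee}) = \OO_{S^{[n]}}(L_n - r B/2)$. The two sentences preceding the corollary in the excerpt are tailored precisely for this application: the proposition identifies when $\Theta_{e^\vee}$ is nef, and the remark asserts that subtracting $K_{S^{[n]}}$ makes the defining inequalities of the nef cone strict, which is exactly what Kodaira vanishing needs.

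First I would invoke the proposition to conclude that the hypothesis ``$L$ is $(n-1)r$-very ample'' forces $\Theta_{e^\vee}$ to be nef. Next I would verify that $\Theta_{e^\vee} - K_{S^{[n]}}$ is ample. Using Fogarty's formula $K_{S^{[n]}} = (K_S)_n$, this difference equals $(L - K_S)_n - r B/2$; since $-K_S$ is ample on the del Pezzo surface $S$, passing from $L$ to $L - K_S$ strictly strengthens each of the inequalities in \cite{bertram_birational_2013-1} that cut out the nef cone of $S^{[n]}$, putting the class in the interior of the nef cone and hence (since $S^{[n]}$ is smooth projective) in the ample cone.

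With $\Theta_{e^\vee} - K_{S^{[n]}}$ ample, Kodaira vanishing on $S^{[n]}$ yields
\[
H^i\bigl(S^{[n]}, \OO(\Theta_{e^\vee})\bigr) = H^i\bigl(S^{[n]}, K_{S^{[n]}} \otimes (\Theta_{e^\vee} - K_{S^{[n]}})\bigr) = 0 \quad \text{for all } i > 0,
\]
so $h^0\bigl(\OO_{S^{[n]}}(L_n - r B/2)\bigr) = \chi\bigl(\OO_{S^{[n]}}(L_n - r B/2)\bigr)$, as required.

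The one point that deserves care is the strictness of the nef-cone inequalities after subtracting $K_{S^{[n]}}$, especially in the exceptional case where $S$ is the blow-up of $\PP^2$ at eight points: there the footnote to the preceding proposition warns of an additional anticanonical inequality on the nef cone of $S^{[n]}$, and one must confirm that this inequality too becomes strict after the subtraction. Once that check is in place, Kodaira vanishing completes the proof.
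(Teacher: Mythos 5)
Your proposal is correct and follows the paper's argument exactly: nefness of $\Theta_{e^\vee}$ from the preceding proposition, ampleness of $\Theta_{e^\vee} - K_{S^{[n]}} = (L-K_S)_n - r\tfrac{B}{2}$ because subtracting the anticanonical class makes the nef-cone inequalities strict, then Kodaira vanishing. You spell out the reason the inequalities become strict (ampleness of $-K_S$) and flag the eight-point blow-up caveat slightly more explicitly than the paper, but the route is the same.
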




\section{Three ``classical'' cases}\label{s:classical}
In this section, we prove Theorem A in each of the special cases when $(n,r) \in \{\,(2,2),(2,3),(3,2) \,\}$ by constructing a general vector bundle $V$, computing the double or triple points of a particular morphism, and checking that the result agrees with the formula for $\chi\big(\OO_{S^{[n]}}(L_n-\tfrac{r}{2}B)\big)$ from the previous section.

\subsection{Double points of an immersed plane curve, $(n,r)=(2,2)$}

Let $L$ be an ample line bundle on $S$ satisfying $-L.K_S \ge 4$. Let $W$ be a general collection of $|W|=\chi(L)-1$ points in $S$, such that the unique curve $C$ of class $L$ containing $W$ is smooth and $W$ is general on $C$. Let $V^*$ be a general extension
\[
	0 \to \OO_S^3 \to V^* \to L \otimes \OO_C(-W) \to 0.
\]
Then $V^*$ is locally free, has rank 3, and its 3 sections drop to rank 2 on $C$. Thus we get a morphism
\[\xymatrix{
	f \colon C \ar[r] & P \left( H^0(V^*) \right) = \PP^2 
}\]
which sends each point $p \in C$ to the unique (up to scaling) section of $V^*$ vanishing at $p$.

We claim that this morphism is a general projection of the embedding defined by the line bundle $\OO_C(W)$. (The condition $-L.K_S \ge 4$ ensures that $W$ is very ample on $C$ since it is general of degree $\chi(L)-1 \ge \chi(L+K_S) + 3 = g_C+3$.) To identify the sections of $V^*$ that vanish at points, we restrict the sequence defining $V^*$ to $C$. This identifies the sections that vanish as the kernel
\[
	0 \to \OO_C(-W) \to H^0(V^*) \otimes \OO_C \to V^*|_C \to L|_C \otimes \OO_C(-W) \to 0,
\]
so $f$ is induced by the morphism $H^0(V^*)^* \otimes \OO_C \twoheadrightarrow \OO_C(W)$. To ensure that $V^*$ does not contain $\OO_S$-summands, and since automorphisms of $\OO_S^3$ do not affect the isomorphism class of the extension $V^*$, we should choose the extension $V^*$ as a point of $\op{Gr}\big(3,\op{Ext}^1(L \otimes \OO_C(-W),\OO_S)\big)$, and this Grassmannian is naturally isomorphic to the Grassmannian $\op{Gr}\big(3,H^0(\OO_C(W))\big)$ parametrizing choices of three sections of $\OO_C(W)$. Choosing $V^*$ to be a general extension ensures that the sections of $H^0(\OO_C(W))$ are general, which proves the claim.

Thus we see that $f(C)$ is a plane curve with only simple nodes. The preimage $f^{-1}(s)$ of a closed point $s$ is equal to the vanishing locus of $s$ as a section of $V^*$, and since $n=2$, we want to count sections that vanish at exactly two points, which correspond to the nodes of $f(C)$. Since degree is preserved by projection, $f(C)$ has degree $|W| = \chi(L)-1 = c_2(V^*)$ and its normalization $C$ has genus $\tfrac{1}{2}L.(L+K_S)+1$ by adjunction. Thus the number of nodes is the difference
\[
	\binom{c_2(V^*)-1}{2} - \left( \tfrac{1}{2}L.(L+K_S) + 1 \right),
\]
which agrees with $\chi\big(\OO_{S^{[2]}}(L_2-B)\big)$.

\subsection{Double points of a blow-up of $S$ immersed in $\PP^4$, $(n,r) = (2,3)$}

Let $L$ be a very ample line bundle on $S$ satisfying $-L.K_S \ge 5$ and an additional positivity condition that we will state below. Choose $W'$ to be a collection of $\chi(L+K_S)$ general points on $S$ not contained on any curve of class $L+K_S$, which also impose independent conditions on curves of class $L$. Let $C,C'$ be general smooth curves of class $L$ containing $W'$ and intersecting transversally, and let $W \subset C \cap C'$ with $|W| = \chi(L)-2$ be the residual of $W'$. Then $W$ is not contained on a curve of class $L+K_S$ and imposes independent conditions on curves of class $L$ by Cayley-Bacharach (\cite{tan_note_1999}).

Let $V^*$ be a general extension
\[
	0 \to \OO_S^3 \to V^* \to L \otimes \mathcal{I}_W \to 0,
\]
chosen as a point of $\op{Gr}\big(3,\op{Ext}^1(L \otimes \mathcal{I}_W,\OO_S)\big)$ to ensure that $V^*$ has no $\OO_S$-summands. This Grassmannian is nonempty for $-L.K_S \ge 5$. Then $V^*$ is rank 4 with 5 sections, so there is a section vanishing at each point of $S$, and in fact the sections of $V^*$ drop to rank 3 on $W'$. We can separate these additional sections by passing to $X = \op{Bl}_{W'}S \xrightarrow{\pi} S$, which yields a morphism
\[\xymatrix{
	f \colon X = \op{Bl}_{W'}S \ar[r] & P \left( H^0(V^*) \right) = \PP^4
}\]
sending each $p \in X$ to the unique section of $V^*$ vanishing at $p$.

More precisely, $f$ is a general projection to $\PP^4$ of the embedding determined by the line bundle $\mathcal{L}=\pi^*L(-\sum E_i)$, where $E_i$ are the exceptional divisors of the blow-up $\pi$. To see that $\mathcal{L}$ is very ample on $X$, we use a criterion of Ballico-Coppens, Theorem 0.1 in \cite{ballico_very_1997}. First, we view $X$ as a blow-up $\tilde{\pi} \colon X \to \PP^2$ with exceptional divisors $E_i$ from blowing up $W'$ and $F_i$ from the del Pezzo surface $S$. (The case $S=\PP^1 \times \PP^1$ can be handled by a similar argument since blowing up a point of $\PP^1 \times \PP^1$ yields the blow-up of $\PP^2$ at two points.) Write
\[
	\mathcal{L} = \tilde{\pi}^*(\OO_{\PP^2}(d)) \otimes \OO_X\big(- \textstyle{\sum} m_i F_i - \textstyle{\sum} E_i\big).
\]
To apply the criterion, we note that $\OO_{\PP^2}(1)$ is very ample and that $m_i + m_j \le d-1$ (this is property (C1) since our blown-up points are general) since $L$ is very ample on $S$ \cite{di_rocco_kvery_1996}, so the last condition we need to check to guarantee that $\mathcal{L}$ is very ample is
\[	h^1(\PP^2,\mathcal{I}_{\textbf{m}} \otimes \OO_{\PP^2}(d-1))=0,
\]
where $\textbf{m}= \sum m_i P_i + \sum Q_i$ is the weighed sum of the points $P_i,Q_i \in \PP^2$ corresponding to the exceptional divisors $F_i,E_i$. Since all the blown up points are general, this last condition holds if we assume that
\[
	\binom{d+1}{2} - \sum \binom{m_i+1}{2} - |W'| = 2d-1-\sum m_i \ge 0,
\]
which is the positivity hypothesis on $L$ we mentioned above.

To see that $f$ is a general projection to $\PP^4$ of the embedding determined by $\mathcal{L}$, note that the kernel $L^*$ in the exact sequence
\[
	0 \to L^* \to H^0(V^*) \otimes \OO_S \to V^* \to \OO_{W'} \to 0
\]
fails to identify the additional sections of $V^*$ that vanish along $W'$, but this is corrected by pulling back $H^0(V^*) \otimes \OO_S \to V^* \to \OO_{W'}$ to $X$, which yields a new kernel
\[
	0 \to \mathcal{L}^* \to H^0(V^*) \otimes \OO_X \to \pi^* V^* \to \OO_{\sqcup E_i} \to 0,
\]
where $\mathcal{L}$ is defined as above. We can think of $f$ as the induced morphism $P(\mathcal{L}^*) \to P(H^0(V^*))$, which is the composition of the morphism $X \to \PP H^0(\mathcal{L})$ and the projection onto the image of the induced inclusion $H^0(V^*)^* \hookrightarrow H^0(\mathcal{L})$, which by construction contains the span of $C,C'$ viewed as sections of $\mathcal{L}$. For fixed $L$ and $W$, assigning this image to each extension $V^*$ gives an isomorphism
\[ 	\op{Gr}\big(3,\op{Ext}^1(L \otimes \mathcal{I}_W,\OO_S)\big) \simeq \op{Gr}\big(3,H^0(\mathcal{L})/\op{span}(C,C')\big),
\]
and these Grassmannians are non-empty due to $-L.K_S \ge 5$. Thus a general choice of $V^*$ yields a choice of 3 general sections of $\mathcal{L}$ in addition to $C,C'$. Since $C,C'$ are general curves containing $W'$, the sections of a general $V^*$ yield 5 general sections of $\mathcal{L}$, namely the projection in the definition of $f$ is general. See \ref{ss: projective bundle} for a more detailed argument in a similar situation.

Thus $f(X)$ is an immersed surface in $\PP^4$ with ordinary double points. The number of ordinary double points of an immersion can be computed using the Herbert-Ronga formula
\begin{thm}[\cite{kleiman_multiple-point_1981} Theorem 5.8]\label{H-R} Let $X$ and $Y$ be smooth varieties, let $k \ge 2$, and let $f \colon X \to Y$ be practically $k$-generic of codimension $\ell \ge 1$. Suppose $f$ is an immersion. Then the $k$-fold point class in $X$ is
\[
  x_k = f^* f_*x_{k-1} - (k-1)\, c_\ell \, x_{k-1} \;\in A^{(k-1)\ell}(X),
\]
where $c_{\ell}=c_{\ell}(f^* T_Y/T_X)$ and $x_1=[X]$ is the fundamental class.
\end{thm}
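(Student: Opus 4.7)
The plan is to prove the Herbert--Ronga formula by induction on $k \ge 2$, following Kleiman's iterative multiple-point framework. The base case $k=2$ is the classical double-point formula for immersions, which I would establish via intersection theory on the fibered square associated to $f \times f \colon X \times X \to Y \times Y$ and the diagonal $\Delta_Y \subset Y \times Y$. The resulting scheme $X \times_Y X$ decomposes set-theoretically as $\Delta_X \cup X_2$, where $X_2$ parametrizes the honest double points. Because $f$ is an immersion, the excess normal bundle of $\Delta_X$ inside $X \times_Y X$ is canonically identified with $f^*T_Y/T_X$, so the excess-intersection formula gives the contribution of $\Delta_X$ as $c_\ell [X]$. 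Pulling back along the first projection and subtracting yields $x_2 = f^*f_*[X] - c_\ell [X]$, which is the base case.

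For the inductive step, I would introduce the iterated pointed $k$-fold scheme, defined recursively as the residual of the $(k-1)$-fold locus inside the fiber product $X \times_Y X_{k-1}$ along the natural diagonal sections. The pullback-pushforward $f^* f_* x_{k-1}$ records, up to excess contributions, the class of configurations consisting of one new distinguished point whose image under $f$ lies on the $(k-1)$-fold locus. Inside this scheme there are exactly $k-1$ collision strata, one for each existing sheet that the new point could collide with; each is a copy of the $(k-1)$-fold locus, and under the immersion hypothesis the excess normal bundle along each stratum is again $f^* T_Y/T_X$. Summing produces a correction of $(k-1) c_\ell \cdot x_{k-1}$, which recovers the claimed recursion. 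The codimension bookkeeping is consistent: both $f^*f_*$ and multiplication by $c_\ell$ raise codimension by $\ell$, so $x_k$ lands in $A^{(k-1)\ell}(X)$ as stated.

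The main obstacle is making this inductive step rigorous in the setting of practical $k$-genericity. One must verify that the various collision loci inside the iterated fiber products meet cleanly with the correct scheme structure, that their excess normal bundles really are isomorphic to $f^* T_Y/T_X$ rather than picking up corrections from higher-order contact, and that the iterated residual intersection actually computes the enumerative $k$-fold point cycle in $A^{(k-1)\ell}(X)$. This is exactly what Kleiman's iteration machinery accomplishes: one constructs a sequence of blow-ups that separates the collision loci and applies the self-intersection formula at each stage. The immersion assumption is what ensures that every excess contribution reduces to the single class $c_\ell$, so the combinatorics collapses to the clean factor $(k-1)$; weakening it would introduce Thom polynomials of higher ramification singularities, which is precisely the terrain entered by the later references cited in the excerpt.
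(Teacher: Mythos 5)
This theorem is not proved in the paper; it is a verbatim citation of Theorem~5.8 of Kleiman's \emph{Multiple-point formulas I: Iteration}. The paper's only contribution here is the brief gloss that follows, describing Kleiman's iteration: one builds a tower of derived maps $f_{j}\colon X_{j+1}\to X_{j}$, where $X_{j+1}$ is the residual to the \emph{single} diagonal in the fiber square $X_j\times_{X_{j-1}}X_j$, and ``practically $k$-generic'' means each $f_{j-1}$ (for $j\le k$) is lci of codimension $\ell$. The $k$-fold class is $x_k=(f_1\circ\cdots\circ f_{k-1})_*[X_k]$, and the Herbert--Ronga recursion, including the factor $k-1$, emerges when the one-diagonal-at-a-time residual calculation is pushed all the way down to $X$. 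So there is no proof in the paper to compare against, but there is a clearly indicated architecture for the proof.

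Your sketch differs from that architecture in a way that matters. You propose residuating all $k-1$ collision strata at once inside $X\times_Y X_{k-1}$ and summing $k-1$ copies of the excess class $c_\ell$. That picture is morally right and is closer in spirit to Herbert's and Ronga's original (topological) treatments, but as an algebraic residual-intersection argument it is incomplete: the $k-1$ collision loci are not disjoint, so excess intersection along their union is not simply a sum of $k-1$ single-diagonal contributions, and you have not argued that the iterated fiber product has the scheme structure for which the residual formula applies or that the excess normal bundle really is $f^*T_Y/T_X$ along each stratum after the previous residuations. Kleiman's one-diagonal-at-a-time tower is designed precisely to avoid these overlapping-loci difficulties, and the practical $k$-genericity hypothesis is what licenses each residual step. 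You correctly identify that the rigor lives in ``Kleiman's iteration machinery,'' but that machinery is structurally different from the argument you outline, and it is exactly the content you defer. As it stands the proposal is a sound heuristic derivation of the formula, not a proof.
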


In \cite{kleiman_multiple-point_1981}, Kleiman constructs ``derived maps" $f_{k-1} \colon X_k \to X_{k-1}$ inductively ($f_0=f$), where $X_k$ is the residual to the diagonal subscheme in the fibered product of two copies of $f_{k-2}$, and $f_{k-1}$ is defined to be the second projection. The $k$-fold point class $x_k$ is $(f_1 \circ \cdots \circ f_{k-1})_*[X_k]$. In general, $x_k$ can be thought of as the closure of the locus of points $p$ in $X$ at which the fiber of $f(p)$ has length $k$. The practically $k$-generic assumption is that $f_{j-1}$ is an lci of codimension $\ell$ for all $j \le k$, which guarantees that each $x_j$ has the expected codimension $(j-1)\ell$.

For our application, we consider the Herbert-Ronga double point formula
\[
	x_2 = f^* f_*[X]-c_{\ell} \; \in A^\ell(X),
\]
which holds even if $f$ is not an immersion (\cite{kleiman_multiple-point_1981} Theorem 5.6). We want to count certain sections of $V^*$, namely the double points of $f$ in $Y=\PP^4$, so we push forward the Herbert-Ronga formula, dividing by 2 to account for the fact that every double point has two preimages. The resulting formula is
\begin{cor}[Double point formula]\label{double point formula} Let $f \colon X \to Y$ be a morphism of smooth varieties. Assume $f$ is practically $2$-generic and has codimension $\ell = \dim Y - \dim X \ge 1$. Then the double point class in $Y$ is
\[
	y_2 = \tfrac{1}{2} \left( (f_*[X])^2 - f_* c_\ell \right) \; \in A^{2\ell}(X).
\]
\end{cor}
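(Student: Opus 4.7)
The plan is to push forward the $k=2$ case of the Herbert--Ronga formula (Theorem \ref{H-R}) via $f_*$ and simplify using the projection formula. Concretely, starting from
\[
x_2 = f^* f_*[X] - c_\ell \in A^\ell(X)
\]
and applying $f_*$ to both sides gives
\[
f_* x_2 = f_*\bigl(f^* f_*[X]\bigr) - f_* c_\ell.
\]

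For the first term on the right, since $[X]$ is the unit of $A^*(X)$ one may write $f^* f_*[X] = f^* f_*[X] \cdot [X]$, so the projection formula yields
\[
f_*\bigl(f^* f_*[X]\bigr) = f_*[X] \cdot f_*[X] = (f_*[X])^2.
\]
This handles the algebraic manipulation; only the coefficient $\tfrac{1}{2}$ remains to be justified.

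The final step is to identify $f_* x_2$ with $2 y_2$. Recall Kleiman's construction: $x_2 = (f_1)_*[X_2]$, where $X_2 \subset X \times_Y X$ is the residual to the diagonal and $f_1$ is the second projection. The composition $f \circ f_1 \colon X_2 \to Y$ factors through the double-point locus $y_2 \subset Y$, and the involution swapping the two factors of $X \times_Y X$ exhibits this composition as generically two-to-one over $y_2$. The practical $2$-genericity hypothesis guarantees that $X_2$ has the expected codimension $\ell$ so that no lower-dimensional contributions appear, and hence $f_* x_2 = 2 y_2$ as Chow classes. Combining with the displayed equation gives the claimed formula
\[
y_2 = \tfrac{1}{2}\bigl((f_*[X])^2 - f_* c_\ell\bigr).
\]

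The one nontrivial point is the factor $\tfrac{1}{2}$: making the ``two-to-one'' assertion precise at the level of cycle classes (rather than only set-theoretically) is exactly what the practical $2$-genericity assumption buys us, since it upgrades the scheme-theoretic description of $X_2$ to a local complete intersection of the expected dimension on which the $\mathfrak{S}_2$-action is free at the generic point of each component.
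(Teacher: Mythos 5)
Your proof takes exactly the route the paper takes: start from $x_2 = f^* f_*[X] - c_\ell$, push forward by $f_*$, apply the projection formula to get $(f_*[X])^2$, and divide by $2$ because the map $X_2 \to Y$ is generically two-to-one onto the double-point locus. Your elaboration of the factor of $2$ via Kleiman's residual construction and the $\mathfrak{S}_2$-involution on $X_2 \subset X\times_Y X$ is a reasonable fleshing-out of the paper's terse ``every double point has two preimages.''

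One small but real imprecision: you cite Theorem~\ref{H-R} (Kleiman's Theorem~5.8) for the identity $x_2 = f^* f_*[X] - c_\ell$, but that theorem carries the hypothesis that $f$ is an immersion, which is \emph{not} assumed in the corollary you are proving. The paper sidesteps this by invoking Kleiman's Theorem~5.6 instead, which gives the $k=2$ formula without the immersion hypothesis. Without that observation your argument only establishes the corollary for immersions, which is insufficient for the paper's application in the case $(n,r)=(3,2)$. Replacing the citation of Theorem~\ref{H-R} with Kleiman~5.6 (or noting that the $k=2$ case of Herbert--Ronga does not need the immersion hypothesis) closes the gap.
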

In our application, the first derived map $f_1$ is the second projection to $X$ from the finite locus $X_2 = \{ (p,q) \colon \text{$f(p)=f(q)$ and $p \ne q$} \} \subset X \times X \setminus \Delta$, so $f$ is practically 2-generic and the double point formula correctly computes the number of double points on $f(X)$. To compute $c_2 = c_2(f^*T_{\PP^4}/T_X)$, we use the total Chern classes $c(T_{\PP^4})=(1+H)^5$ and $c(T_X)=1-K_X  + (12\rho-K_X^2)$, where $\rho$ is the class of a point and $K_X = K_S + \sum E_i$, as well as the identity $f^*H = L-\sum E_i$. After some simplification, we get
\[
	c_2 = [(1+(L-\textstyle{\sum} E_i))^5 (1+K_X + 2(K_X^2 - 6))]_2
    = 5L.K_S + 2K_S^2 + 10L^2-7|W'|-12,
\]
and substituting $|W'| = L^2 - c_2(V^*)$ yields
\[
	y_2 = \tfrac{1}{2}\left( c_2(V^*)^2 -7c_2(V^*) - 5L.K_S -2K_S^2-3L^2+12\right)
\]
which matches the above formula for $\chi\left(\OO_{S^{[2]}}(L_2 - 3\tfrac{B}{2})\right)$ when we make the further substitutions $c_2(V^*)=\chi(L)-2$ and $L^2 = 2\chi(L)+L.K_S-2$.

\begin{rem} We could also use the double point formula to recover our count in the above case $(n,r)=(2,2)$ of a degree $c_2(V^*)$ immersion $f \colon C \to \mathbb{P}^2$ of a smooth curve of genus $\tfrac{1}{2}L.(L+K_S)$. In this case $c(T_{\PP^2})=(1+H)^3$ and $c(T_C)=1-K_C$, so 
\[
	c_1=c_1(f^*T_{\PP^2}/T_C) = [f^*(1+3H)(1+K_C)]_1 = 3c_2(V^*)+L.(L+K_S)
\]
and therefore
\[
	y_2 = \tfrac{1}{2}\big( c_2(V^*)^2 - (3c_2(V^*)+L.(L+K_S)) \big),
\]
which agrees with the previous computation.
\end{rem}

\subsection{Triple points of a non-immersed blow-up of $S$ in $\PP^3$, $(n,r)=(3,2)$}

As in the case $(n,r)=(2,3)$, choose sufficiently positive $L$ (with the same conditions except that $-L.K_S \ge 4$ suffices in this case), general $W'$ of length $|W'|=\chi(L+K_S)$, and smooth transversal curves $C,C'$ of class $L$ containing $W'$. Let $W$ be the residual to $W'$ in $C \cap C'$. We let $V^*$ be a general extension $0 \to \OO_S^{\oplus 2} \to V^* \to L \otimes \mathcal{I}_W \to 0$, which has rank 3 and 4 sections that drop to rank 2 on $W'$. As before we get a morphism
\[\xymatrix{
	f \colon X = \op{Bl}_{W'}S \ar[r] & P\left( H^0(V^*) \right) = \PP^3
}\]
sending $p \in X$ to the unique section of $V^*$ vanishing at $p$, and once again $f$ is a general projection to $\PP^3$ of the embedding of $X$ determined by the line bundle $\pi^* L(-\sum E_i)$, where $E_i$ are the exceptional divisors of the blow-up.

Since $f$ is a general projection to $\PP^3$ of an embedded surface, we can give an explicit description of its singularities, following \cite{ciliberto_branch_2011} and \cite{MR1614780}. The singular points of the image $f(X)$ form an irreducible curve $C_0$ (the double point locus) containing finitely many ordinary triple points (which are three-branch nodes of the curve) and finitely many pinch points (which are smooth points of the curve, but at which the derivative of $f$ drops rank by 1, so $f$ is not an immersion). The preimage $C_1 := f^{-1}(C_0) \subset X$ is a curve and $f|_{C_1} \colon C_1 \to C_0$ is generically two-to-one. The pinch points on $C_0$ are branch points of $f|_{C_1}$, over which $C_1$ is smooth. The only singularities of $C_1$ are triples of simple nodes lying over each triple point of $C_0$.

Since $n=3$, we want to compute the number of these triple points. This can be done using Kleiman's triple point formula.
\begin{thm}[\cite{kleiman_multiple-point_1981}, Theorem 5.9]\label{Kleiman triple} If $f \colon X \to Y$ is practically 3-generic of codimension $\ell$ between smooth varieties, then the triple point class in $X$ is
\[
  x_3 = f^* f_* x_2 - 2c_\ell x_2 + \left(\sum_{k=1}^{\ell} 2^k c_{\ell-k} c_{\ell+k}\right) \; \in A^{2\ell}(X).
\]
\end{thm}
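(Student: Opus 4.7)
The plan is to derive the triple point formula by applying the double point formula (Corollary \ref{double point formula}) to the first derived map $f_1 \colon X_1 \to X$. The key observation is that triple points of $f$ are precisely double points of $f_1$: since $x_3 = (f_1 \circ f_2)_*[X_2] = (f_1)_* x_{2,1}$, where $x_{2,1}$ denotes the double point class of $f_1$ in $X_1$, and since practical $3$-genericity of $f$ entails practical $2$-genericity of $f_1$ of codimension $\ell$, the double point formula gives
\[
  x_{2,1} = f_1^*(f_1)_*[X_1] - c_\ell\bigl(f_1^*TX/TX_1\bigr).
\]
Pushing this forward to $X$ via $f_1$ and using $(f_1)_*[X_1] = x_2$ together with the projection formula yields, after rewriting the resulting self-intersection $x_2 \cdot x_2$ via a second application of the double point formula, the leading expression $f^*f_*x_2 - 2c_\ell x_2$, which matches the Herbert--Ronga formula (Theorem \ref{H-R}) in the case when $f$ is an immersion.

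The new ingredient for general $f$ is that $f^*T_Y/T_X$ is no longer a vector bundle of rank $\ell$ but a coherent sheaf, whose higher Chern classes $c_{\ell+1}, c_{\ell+2},\ldots$ can be nonzero precisely along the locus where $df$ fails to be injective. The plan is to realize $X_1$ as the strict transform of $X \times_Y X$ under the blow-up of the diagonal; on this blow-up the exceptional divisor is $\PP(f^*T_Y/T_X)$, and $c_\ell\bigl(f_1^*TX/TX_1\bigr)$ decomposes via the splitting principle into pieces that involve the tautological class of the projective bundle. Pushing forward via $f_1$ converts those exceptional pieces into Segre-class integrals along $\PP(f^*T_Y/T_X)$, and standard Segre-class expansions produce precisely a sum of products $c_{\ell-k}c_{\ell+k}$. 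The powers of $2$ in the coefficients $2^k$ arise from the factor-of-two symmetry in the residual-to-diagonal construction, which makes $f_1$ generically two-to-one onto the double-point locus of $f$ and doubles the exceptional-divisor contributions at each stage.

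The main obstacle is the explicit bookkeeping that produces the correction sum with exactly the coefficients $2^k$: this requires a precise Segre-class computation on the blow-up and careful tracking of how the normal cone of the diagonal interacts with the Chern classes of $TX_1$, keeping the projection-formula pushforwards aligned with the correct excess contributions. Practical $3$-genericity is used throughout to ensure that $X_1$ and $X_2$ are local complete intersections of the expected codimension $\ell$ and $2\ell$ respectively, so that the iterated Chern class manipulations apply on the level of Chow groups without recourse to refined intersection theory.
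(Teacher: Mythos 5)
The paper does not give a proof of this statement: it is cited verbatim from Kleiman's \emph{Multiple-point formulas I: Iteration}, so your proposal should be measured against Kleiman's argument, and on that score the overall strategy you describe—apply the double-point formula to the derived map $f_1 \colon X_2 \to X_1 = X$ and push forward, using that $x_3 = (f_1)_*(f_2)_*[X_3]$—is indeed the route Kleiman takes. The observation that $c_{\ell+k}=0$ for $k\ge 1$ when $f$ is an immersion (so that the correction sum vanishes and one recovers the Herbert–Ronga case) is a useful sanity check and is correct.

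However, the step you flag as ``the main obstacle'' is not merely bookkeeping; it is the entire content of the theorem, and the mechanisms you propose for it do not, as stated, deliver the result. First, the passage from $x_2\cdot x_2 = (f^*f_*[X]-c_\ell)\,x_2$ to the leading term $f^*f_*x_2 - 2c_\ell x_2$ is not a ``second application of the double point formula'': these two expressions differ by $(f^*f_*[X])\,x_2 - f^*f_*x_2$, which is a genuine excess-intersection discrepancy (it pushes forward to zero by the projection formula but is nonzero on $X$); identifying it is precisely the hard step. Second, the identification of the exceptional divisor of the blow-up of the diagonal in $X\times_Y X$ with $\PP(f^*T_Y/T_X)$ is valid only over the open locus where $f$ is an immersion; elsewhere $f^*T_Y/T_X$ is a coherent sheaf of jumping rank and the exceptional divisor is $\PP(C)$ for the normal cone $C$, which is handled in Kleiman's framework by Fulton's residual intersection theory, not by a naive projective bundle. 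Third, ``factor-of-two symmetry of the residual construction makes $f_1$ two-to-one and doubles the contributions at each stage'' is not an argument; the $2^k$ coefficients come from Kleiman's iteration lemma, which gives an exact relation between $c(\nu_{f_1})$ and $f_1^*c(\nu_f)$ involving the divisor class of the (inverse image of the) diagonal on $X_2$, and then from a computation of $(f_1)_*$ of powers of that divisor. Without that lemma, both the leading term and the correction sum remain conjectural, so the proposal as written has a genuine gap exactly where the theorem's content lives.
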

To obtain a corresponding formula on $Y$, we substitute the double point formula for $x_2$, push forward to $Y$, divide by 3, and use the projection formula.
\begin{cor}[Triple point formula]\label{triple pt on Y} Let $f \colon X \to Y$ be a practically 3-generic codimension $\ell$ morphism of smooth varieties. Then the triple point class in $Y$ is
\[
	y_3 = \frac{1}{6} \left( (f_*[X])^3 - 3(f_* c_\ell)(f_*[X])+2f_*(c_\ell^2) + \sum_{k=1}^{\ell}2^k f_*(c_{\ell-k} c_{\ell+k}) \right) \; \in A^{3\ell}(Y).
\]
\end{cor}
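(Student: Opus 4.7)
The plan is to follow the recipe indicated just before the statement: substitute the double point class into Kleiman's triple point formula, push the resulting identity forward to $Y$ using the projection formula, and normalize by the multiplicity with which an honest triple point of $f$ is counted in $f_* x_3$.

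First I would plug the double point relation $x_2 = f^* f_*[X] - c_\ell$ (the version of Herbert--Ronga valid without the immersion hypothesis, as used already in Corollary~\ref{double point formula}) into Theorem~\ref{Kleiman triple} to get
\[
    x_3 = f^* f_*\bigl(f^* f_*[X] - c_\ell\bigr) - 2 c_\ell\bigl(f^* f_*[X] - c_\ell\bigr) + \sum_{k=1}^\ell 2^k c_{\ell-k} c_{\ell+k}.
\]
Then I would apply $f_*$ to both sides. The projection formula $f_*(f^* \alpha \cdot \beta) = \alpha \cdot f_* \beta$ (combined with $\alpha \cdot [X] = \alpha$) gives the elementary identities $f_*(f^* f_*[X]) = (f_*[X])^2$ and $f_*(c_\ell \cdot f^* f_*[X]) = f_*[X] \cdot f_* c_\ell$. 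Iterating once more yields $f_*(f^* f_* x_2) = (f_*[X])^3 - f_*[X] \cdot f_* c_\ell$ and $f_*(c_\ell x_2) = f_*[X] \cdot f_* c_\ell - f_*(c_\ell^2)$, and collecting the three summands gives
\[
    f_* x_3 = (f_*[X])^3 - 3\, f_*[X] \cdot f_* c_\ell + 2 f_*(c_\ell^2) + \sum_{k=1}^\ell 2^k f_*(c_{\ell - k} c_{\ell+k}),
\]
which is exactly six times the right-hand side of the statement.

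It remains to account for the factor $1/6$. In Kleiman's inductive construction of the derived maps $f_{j-1}\colon X_j \to X_{j-1}$, the scheme $X_j$ parametrizes ordered tuples of distinct preimages: over an honest isolated $3$-fold point $y \in Y$ with preimages $\{p_1, p_2, p_3\}$, the fibre of $X_3$ consists of the $3! = 6$ orderings, the composite map back to $X$ distributes them evenly so that $x_3$ carries each $p_i$ with multiplicity $(3-1)! = 2$, and hence $f_* x_3$ counts $y$ with multiplicity $3 \cdot 2 = 6$. The same argument with $k=2$ is what forces the $1/2$ appearing in Corollary~\ref{double point formula}. Dividing the displayed identity for $f_* x_3$ by $6$ therefore produces the triple point class $y_3 \in A^{3\ell}(Y)$ claimed in the statement.

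The only delicate step is the projection-formula bookkeeping, since substituting the double point formula produces nested compositions of $f^*$ and $f_*$, and at each stage one has to identify which factor is pulled back from $Y$ before extracting it from the push-forward. Beyond this no new geometry is required; the corollary is essentially a formal consequence of Theorem~\ref{Kleiman triple} and the double point formula.
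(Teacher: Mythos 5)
Your proof is correct and follows essentially the same route the paper describes: substitute the Herbert--Ronga relation $x_2=f^*f_*[X]-c_\ell$ into Kleiman's formula for $x_3$, push forward via the projection formula, and divide by the multiplicity $3!=6$ with which $f_*x_3$ counts an honest triple point. Your accounting of the factor of six (two points of $X_3$ above each of the three preimages, hence multiplicity $2$ for $x_3$ in $X$ and $6$ for $f_*x_3$ in $Y$) is exactly the paper's explanation, and the intermediate projection-formula identities you record are all right.
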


In our setting, the first derived map $f_1 \colon X_2 \to X_1 = X$ is the normalization of $C_1$, and the set of closed points in $X_2$ lying over the nodes of $C_1$ is exactly the image of the second derived map $f_2 \colon X_3 \to X_2$. Thus $f$ is practically 3-generic, so the triple point formula applies. Note that there are three nodes over each triple point of $C_0$, and two points in $X_3$ over each of these nodes, which explains the factor of $\frac{1}{6}$ in the formula.  

Letting $\rho$ denote the point class in $A^2(X)$, we compute the total Chern class
\[
	c(f^*T_{\PP^3}/T_X) = (1+(L-\textstyle{\sum} E_i))^4(1+K_X + 2(K_X^2-6\rho)),
\]
from which we extract $c_1 = 4L + K_S - 3 \sum E_i$ and $c_2 = 6L^2+4L.K_S+ 2K_S^2-4|W'|-12$ after substituting $K_X = K_S+\sum E_i$. To compute $y_3$ we need to know how a divisor $D$ on $X$ pushes forward under $f$, but this is easy since $f_* D$ in $\PP^3$ is determined by its degree $(f_* D).H = D.f^*H = D.(L-\textstyle{\sum} E_i)$.
In particular $f_* c_1 = 4L^2 + L.K_S -3|W'|$, so
\begin{align*}
	y_3 &= \frac{1}{6} \left[ c_2(V^*)^3 - 3c_2(V^*) f_*(c_1) + 2f_*(c_1^2)+2f_*(c_2) \right] \\
    &= \frac{1}{6} \left[ c_2(V^*)^3-3c_2(V^*)(4L^2+L.K_S-3|W'|)+44L^2+24L.K_S+6K_S^2 \right. \\
 &\phantom{indent}\left.-26|W'|-24 \right],
\end{align*}
which agrees with the above formula for $\chi\big(\OO_{S^{[3]}}(L_3 - \tfrac{B}{2})\big)$ when we substitute $c_2(V^*)=\chi(L)-2$, $|W'| = \chi(L)+L.K_S$, and $L^2 = 2\chi(L)+L.K_S-2$.

\section{Multiple point formulas}\label{s:multiple point formulas}
In this section we prove Theorem A in the general case, namely when $n,r \ge 2$, $(n,r) \notin \{\, (2,2),(2,3),(3,2) \,\}$, and $n \le 7$. To do so, we will choose a general globally-generated vector bundle $V^*$ with the appropriate invariants and collect the sections of $V^*$ that vanish at points as the kernel $0 \to G \to H^0(V^*) \otimes \OO_S \to V^* \to 0$. The sections of $V^*$ that vanish at $n$ points will correspond to the $n$-fold points of the natural map $f \colon P(G) \to P\big(H^0(V^*)\big)$, which on closed points is just $(p,s) \mapsto s$, where $p \in S$ and $s$ is a section of $V^*$ vanishing at $p$. We will compute the number of $n$-fold points of $f$ using multiple point formulas, which are only known in sufficient generality ($f$ has corank 2) up to $n=7$. Our computer code checks that these computations agree with the value of $\chi\big(\OO_{S^{[n]}}(L_n-r\tfrac{B}{2})\big)$ obtained from the power series in \S\ref{s:determinant}.

\subsection{Choosing $V$}\label{ss:choosingV}

Let $L$ be an ample line bundle on $S$. Let $W$ be a collection of $|W| = \chi(L)-(n-1)(r-1)$ distinct points on $S$ satisfying the following genericity condition: the points should impose independent conditions on curves of class $L$ and should not be contained on a curve of class $L+K_S$. Then we define $V^*$ as an extension
\[
	0 \to \OO_S^r \to V^* \to L \otimes \mathcal{I}_W \to 0,
\]
corresponding to a general point in $\op{Gr}\big(r,\op{Ext}(L \otimes \mathcal{I}_W,\OO_S) \big)$, which is non-empty if and only if $-L.K_S \ge n(r-1)+1$. The Grassmannian is the natural extension space since we are mainly interested in the isomorphism class of the middle object in the extension. More precisely, the isomorphism
\[
	\op{Ext}^1(L \otimes \mathcal{I}_W,\OO_S^r) \to \op{Ext}^1(L \otimes \mathcal{I}_W,\OO_S)^r
\]
defined by pushing forward extensions along the $r$ projection maps $p_i \colon \OO_S^r \to \OO_S$ is $\op{GL}(r,\CC)$-equivariant, where the action on the left is by pushing forward along automorphisms of $\OO_S^r$ (which has the effect of pre-composing the map $\OO_S^r \to V^*$ by the inverse automorphism) and the action on the right is the natural action on the $r$ summands. Removing the locus where the action is not free (which corresponds on the left to extensions that have $\OO_S$-summands and on the right to linearly dependent $r$-tuples) and passing to the quotient yields the Grassmannian above.

\begin{prop}\label{gen extensions} Let $n,r \ge 2$ and suppose $-L.K_S \ge 2 + (n+1)(r-1)$ and $L$ is $N=\op{max}\big\{(n-1)(r-1),3 \big\}$-very ample.\footnote{Compare this to the positivity condition on $L$ in Corollary \ref{c:euler_counts_sections}.} Then the extensions $0 \to \OO_S^r \to V^* \to L \otimes \mathcal{I}_W \to 0$ parametrized by general points of $\op{Gr}\big(r,\op{Ext}(L \otimes \mathcal{I}_W,\OO_S) \big)$ satisfy
\begin{enumerate}[(a)]
\item $\ch(V^*)=(r+1,L,(n-1)(r-1)-1+\tfrac{1}{2}LK_S)$;
\item $V^*$ is globally generated $\iff$ $(n,r) \notin \{ (2,2),(2,3),(3,2) \}$;
\item $h^1(V^*)=h^2(V^*)=0$ and $h^0(V^*)=\chi(V^*)=n(r-1)+1$;
\item $V^*$ is locally free;
\item $h^0(V)=0$.
\item No section of $V^*$ vanishes along a curve.
\end{enumerate}
\end{prop}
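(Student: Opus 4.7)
The plan is to derive each of the six claims from the defining short exact sequence
\[
0 \to \OO_S^r \to V^* \to L \otimes \mathcal{I}_W \to 0 \tag{$\star$}
\]
together with the genericity of $W$ and the Grassmannian parametrization of the extension. Parts (a) and (c) are the most direct. Additivity of $\ch$ along $(\star)$ with $|W| = \chi(L) - (n-1)(r-1)$ and the del Pezzo Riemann--Roch formula $\chi(L) = 1 + \tfrac{1}{2}L(L - K_S)$ gives (a). For (c), since $S$ is del Pezzo we have $h^i(\OO_S^r) = 0$ for $i > 0$, so the long exact sequence reduces the higher cohomology of $V^*$ to that of $L \otimes \mathcal{I}_W$; the vanishings then follow from $h^2(L) = h^0(K_S - L)^* = 0$ (by ampleness) together with the assumption that $W$ imposes independent conditions on $|L|$.

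Parts (d), (e), and (f) I would get by dualizing or localizing $(\star)$. For (d), the sheaf $\mathcal{E}xt^1(L \otimes \mathcal{I}_W, \OO_S)$ is a length-$1$ skyscraper at each point of $W$, so $V^*$ is locally free precisely when the chosen $r$-dimensional subspace of $\Ext^1(L \otimes \mathcal{I}_W, \OO_S)$ surjects onto each stalk. Since $h^1(L^{-1}) = 0$, the local-to-global map $\Ext^1 \twoheadrightarrow H^0(\mathcal{E}xt^1) \cong \bigoplus_{p \in W} \CC$ is surjective, so failure of local freeness is a finite union of hyperplane conditions on the Grassmannian, avoided generically. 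For (e), applying $\Hom(-, \OO_S)$ to $(\star)$ and using $\hom(L \otimes \mathcal{I}_W, \OO_S) = h^0(L^{-1}) = 0$ reduces the statement to injectivity of the connecting map $\CC^r \to \Ext^1(L \otimes \mathcal{I}_W, \OO_S)$, which is exactly the inclusion of the chosen $r$-dim subspace. For (f), a section of $V^*$ vanishing along an effective divisor $C > 0$ gives $\OO_S(C) \hookrightarrow V^*$; the composition with $V^* \twoheadrightarrow L \otimes \mathcal{I}_W$ cannot vanish (else we would obtain a nonzero $\OO_S(C) \to \OO_S^r$, impossible), so it produces a curve of class $L - C$ containing $W$. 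The explicit hypothesis that $W$ avoids all curves of class $L + K_S$ rules out $C = -K_S$, and I expect the positivity bound $-L.K_S \ge 2 + (n+1)(r-1)$ together with Riemann--Roch to force $h^0(L - C) < |W|$ for all other effective $C$, so that a general $W$ of its prescribed size avoids those curves too.

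Part (b) is the delicate one. Away from $W$, sufficient ampleness of $L$ makes $L \otimes \mathcal{I}_W$ globally generated, and combined with the $\OO_S^r$ summand this gives global generation off $W$. At $p \in W$, the Tor long exact sequence shows that $\OO_S^r|_p \to V^*|_p$ has a $1$-dimensional kernel, so its image has codimension $2$ in $V^*|_p$; the missing two directions must come from the evaluation
\[
H^0(L \otimes \mathcal{I}_W) \longrightarrow (L \otimes \mathcal{I}_W)|_p \cong \mathfrak{m}_p/\mathfrak{m}_p^2 \otimes L|_p \cong \CC^2.
\]
The kernel of this evaluation is $H^0(L \otimes \mathcal{I}_{W'})$ where $W' = (W \setminus \{p\}) \cup (2p)$ has length $|W| + 2$. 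When $(n-1)(r-1) \ge 3$, equivalently $(n,r) \notin \{(2,2),(2,3),(3,2)\}$, the $(n-1)(r-1)$-very ampleness of $L$ should guarantee that $W'$ imposes independent conditions on $|L|$, so the drop in $h^0$ is exactly $2$ and the evaluation is surjective. In the three exceptional cases, $(n-1)(r-1) \le 2$ forces the drop to be at most $1$, obstructing global generation at $p$.

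The main obstacle will be part (b): carefully identifying the kernel of $\OO_S^r|_p \to V^*|_p$ via the Tor sequence, and then invoking the $N$-very ampleness hypothesis to arrange that the first-order-thickened subscheme $W'$ still imposes independent conditions on $|L|$. Part (f) is also a minor obstacle, since one must enumerate all effective classes $C$ between $0$ and $L$ and verify that only the class $C = -K_S$ genuinely requires the explicit hypothesis on $W$, with all other classes handled by Riemann--Roch and genericity.
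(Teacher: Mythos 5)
Parts (a), (c), and (e) are essentially correct and track the paper's argument; the paper handles (e) slightly differently (via the observation that a nonzero map $V^* \to \OO_S$ would force an $\OO_S$-summand, impossible by construction), but your connecting-map argument is equivalent and clean.

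For (d), your reasoning contains a genuine error: the vanishing $h^1(L^{-1}) = 0$ controls the \emph{kernel} of the local-to-global map, not the cokernel. The five-term exact sequence reads
\[
0 \to H^1(L^{-1}) \to \Ext^1(L\otimes\mathcal{I}_W,\OO_S) \to H^0\bigl(\EXT^1(L\otimes\mathcal{I}_W,\OO_S)\bigr) \xrightarrow{d_2} H^2(L^{-1}),
\]
so $h^1(L^{-1})=0$ gives injectivity; surjectivity requires $d_2 = 0$, and $H^2(L^{-1})\cong H^0(L+K_S)^*$ is nonzero. The correct condition for the projections onto each stalk of $W$ to be nonzero is the Cayley--Bacharach property, which the paper invokes directly via Huybrechts--Lehn. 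Your conclusion is right, but the intermediate step is not.

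The serious gap is in (b). Your analysis is entirely at the fibers over points $p\in W$, and you argue that the evaluation $H^0(L\otimes\mathcal{I}_W)\to (L\otimes\mathcal{I}_W)|_p\cong\CC^2$ fails to surject when $(n-1)(r-1)\le 2$. But in the cases $(n,r)=(2,3)$ or $(3,2)$, $h^0(L\otimes\mathcal{I}_W)=2$ and for general $W$ the two curves $C_1, C_2$ of class $L$ through $W$ meet transversally at each $p\in W$, so the evaluation \emph{is} surjective there. The actual obstruction is at the \emph{residual} base points: the pencil has base locus $C_1\cap C_2 = W\cup W'$ with $W'$ nonempty (of length $\chi(L+K_S)$), and at $q\in W'$ every section of $L\otimes\mathcal{I}_W$ vanishes, so $L\otimes\mathcal{I}_W$ (and hence $V^*$) fails to be globally generated at $q$. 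Your statement ``Away from $W$, sufficient ampleness of $L$ makes $L\otimes\mathcal{I}_W$ globally generated'' is therefore false precisely in the exceptional cases. Moreover, your appeal to $(n-1)(r-1)$-very ampleness to control the length $|W|+2$ subscheme $W'$ is a mismatch of scales: $N$-very ampleness governs subschemes of length $\le N+1$, whereas $|W|+2 = \chi(L)-(n-1)(r-1)+2$ is vastly larger. The paper's route is the reduction ``$V^*$ is globally generated iff $L\otimes\mathcal{I}_W$ is,'' combined with the classical fact that for general $W$, $L\otimes\mathcal{I}_W$ is globally generated iff $h^0(L\otimes\mathcal{I}_W)=(n-1)(r-1)\ge 3$.

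For (f), your overall shape is right (a section vanishing on $C_1$ forces a residual curve of class $L-C_1$ through $W$, which cannot exist), but you misattribute the hypotheses. The bound $-L.K_S\ge 2+(n+1)(r-1)$ is the non-emptiness condition for the Grassmannian; the hypothesis that actually controls $h^0(L-C_1)$ is the $N$-very ampleness, via the paper's Lemma~\ref{drop in sections}, which uses a basis of the effective cone of the del Pezzo surface to show $h^0(L)-h^0(L-D)>N$ for every effective $D>0$. This yields $h^0(L-C_1)<|W|$ uniformly, so no single class such as $C=-K_S$ needs special treatment.
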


\begin{proof} Part (a) is additivity of the Chern character. Part (b) follows from the fact that $V^*$ is globally generated if and only if $L \otimes \mathcal{I}_W$ is globally generated. Part (c) is obtained by computing the cohomology long exact sequence and using genericity of $W$. Part (d) holds since the Cayley-Bacharach property is satisfied for $L \otimes \mathcal{I}_W$ (\cite{huybrechts_geometry_2010} Theorem 5.1.1). For (e) one can check the equivalent assertion $\hom(V^*,\OO_S)=0$ by showing that the cokernel of $H^0(V^*) \otimes \OO_S \to V^*$ has no non-zero maps to $\OO_S$, hence any non-zero map $V^* \to \OO_S$ would force $V^*$ to have an $\OO_S$-summand, which is impossible since our extension was chosen as a point of $\op{Gr}\big(r,\op{Ext}(L \otimes \mathcal{I}_W,\OO_S) \big)$.

For (f), we note that the sections $\OO_S^r \to V^*$ satisfy the claim since they drop rank only on $W$. Any other section $s$ of $V^*$ induces a section of $L \otimes \mathcal{I}_W$ corresponding to a curve $C$ of class $L$ containing $W$, and one can use the snake lemma to see that the vanishing locus of $s$ is contained in $C \setminus W$. Thus if $s$ vanished on a curve $C_1$, $C$ would split as $C = C_1 \cup C_2$ with $W \subset C_2$. We can use the $N$-very ample assumption on $L$ to rule this out as follows. A basis $\mathcal{B}_{\text{eff}}$ of the effective cone in $\op{Pic}S$ is given in \cite{batyrev_cox_2004} (Corollary 3.3). A brute force check reveals that
\[
	D.D' \le 3 \quad \text{and} \quad \chi(-D) = \begin{cases} 1 & \text{if $D = -K_{S_8}$} \\ 0 & \text{otherwise} \end{cases} \quad \text{for all $D,D' \in \mathcal{B}_{\text{eff}}$}.
\]
Since $L$ is $N$-very ample, $L.D \ge N$ for all $D \in \mathcal{B}_{\text{eff}}$, so the following lemma guarantees that $h^0(L)-h^0(L_1) > N$, where $L_1 = \OO_S(C_1)$. But then $|W| = h^0(L)-(n-1)(r-1) > h^0(L_1)$ and $W$ is general, so $C_1$ cannot possibly contain $W$, contradiction.
\end{proof}

\begin{lem}\label{drop in sections} Let $S$ be a del Pezzo surface. Let $k \ge 3$, let $L$ be a line bundle such that $L-D \ge 0$ and $L.D \ge k$ for all $D \in \mathcal{B}_{\op{eff}}$, and assume also that $-L.K_{S_8} \ge k+1$ in the case $S=S_8$. Then $h^0(L)-h^0(L-D) > k$ for all $D > 0$.
\end{lem}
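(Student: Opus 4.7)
The plan is to reduce to the case $D \in \mathcal{B}_{\op{eff}}$ and then combine Kodaira vanishing with Riemann--Roch on $D$. For the reduction, on a del Pezzo surface the monoid of effective classes is generated by $\mathcal{B}_{\op{eff}}$ (as follows from the Cox ring description in \cite{batyrev_cox_2004}), so any effective $D > 0$ can be written as $D = C + D'$ with $C \in \mathcal{B}_{\op{eff}}$ and $D' \ge 0$. Multiplication by a global section of $\mathcal{O}_S(D')$ embeds $H^0(L-D) \hookrightarrow H^0(L-C)$, giving $h^0(L) - h^0(L-D) \ge h^0(L) - h^0(L-C)$, so it suffices to prove the inequality for $D \in \mathcal{B}_{\op{eff}}$.

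For such $D$, I would verify that $L - D - K_S$ is ample by testing against $\mathcal{B}_{\op{eff}}$: for any $D' \in \mathcal{B}_{\op{eff}}$,
\[
(L - D - K_S) \cdot D' \ge k - 3 + 1 = k - 2 \ge 1,
\]
using $L \cdot D' \ge k$, the stated bound $D \cdot D' \le 3$, and $-K_S \cdot D' \ge 1$ (by ampleness of $-K_S$ on a del Pezzo). Since $\overline{NE}(S)$ is polyhedrally generated by $\mathcal{B}_{\op{eff}}$, strict positivity on the generators gives ampleness via Kleiman's criterion, and Kodaira vanishing then yields $H^i(L-D) = 0$ for $i > 0$; an analogous (and easier) argument shows $H^i(L) = 0$ for $i > 0$.

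From the restriction sequence $0 \to L(-D) \to L \to L|_D \to 0$ one then obtains
\[
h^0(L) - h^0(L-D) = h^0(L|_D) \ge \chi(L|_D) = L \cdot D + 1 - p_a(D),
\]
where the Euler characteristic is computed by Riemann--Roch on $D$. The stated property $\chi(-D) \in \{0,1\}$ gives $p_a(D) = \chi(-D) \in \{0,1\}$, with $p_a(D) = 1$ occurring only for $D = -K_{S_8}$. If $p_a(D) = 0$, then $L \cdot D + 1 \ge k + 1$; if $p_a(D) = 1$, the extra hypothesis $-L \cdot K_{S_8} \ge k + 1$ gives $L \cdot D \ge k + 1$. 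In either case $h^0(L) - h^0(L-D) > k$. The delicate step is the ampleness check, where the chain $k - 3 + 1 \ge 1$ is tight and relies precisely on $k \ge 3$ together with the numerical constraints imposed on $\mathcal{B}_{\op{eff}}$.
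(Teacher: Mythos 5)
Your proof is correct and follows essentially the same strategy as the paper: reduce to $D \in \mathcal{B}_{\op{eff}}$, establish vanishing of higher cohomology so that Euler characteristics compute $h^0$, and conclude from $L.D + 1 - \chi(-D) \ge k+1$ (your $p_a(D)$ equals $\chi(-D)$, so your restriction-sequence computation reproduces the paper's identity $\chi(L)-\chi(L-D) = L.D + 1 - \chi(-D)$). The only substantive divergence is in the vanishing step: the paper checks the weaker condition $(L-D)\cdot D' \ge k - 3 \ge 0$ to get $L-D$ nef and cites \cite{knutsen_exceptional_2003} for vanishing of higher cohomology of nef bundles on del Pezzo surfaces, whereas you upgrade to ampleness of $L-D-K_S$ via $(L-D-K_S)\cdot D' \ge k-2 \ge 1$ and invoke Kodaira vanishing — a more self-contained route that happens to close numerically under the same hypothesis $k \ge 3$.
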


\begin{proof} It suffices to prove the inequality for all $D \in \mathcal{B}_{\op{eff}}$. For all $D,D' \in \mathcal{B}_{\op{eff}}$, $(L-D).D' = L.D' - D.D' \ge 3 - 3 = 0$, which implies that $L-D$ is nef and hence has vanishing higher cohomology (\cite{knutsen_exceptional_2003}). Thus
\[
	h^0(L)-h^0(L-D)=\chi(L)-\chi(L-D) = L.D + 1 - \chi(-D),
\]
so the claim follows from the computation of $\chi(-D)$ given above.
\end{proof}

\subsection{Projective bundle}\label{ss: projective bundle} Assume $V^*$ is chosen as in the previous proposition with $(n,r) \notin \{\,(2,2),(2,3),(3,2)\,\}$ to ensure $V^*$ is globally generated. The snake lemma yields a commutative diagram
\[\xymatrix{
	&& G \ar[d] \ar@{=}[r] & G \ar[d] \\
	0 \ar[r] & \OO_S^r \ar@{=}[d] \ar[r] & H^0(V^*)\otimes \OO_S \ar[d] \ar[r] & H^0(L \otimes \mathcal{I}_W) \otimes \OO_S \ar[d] \ar[r] & 0 \\
	0 \ar[r] & \OO_S^r \ar[r] & V^* \ar[r] & L \otimes \mathcal{I}_W \ar[r] & 0
}\]
in which the two vertical sequences defining the kernel $G$ are exact. $G$ is locally free, has no cohomology, and its dual fits in the short exact sequence $0 \to V \to H^0(V^*)^* \otimes \OO_S \to G^* \to 0$. Since $h^0(V)=0$, the diagram yields inclusions $H^0(L \otimes \mathcal{I}_W)^* \hookrightarrow H^0(V^*)^* \hookrightarrow H^0(G^*)$, which make explicit the natural isomorphism
\[
\op{Gr}\big(r,\op{Ext}^1(L \otimes \mathcal{I}_W,\OO_S)\big) \cong \op{Gr}(r,H^0(G^*)/H^0(L \otimes \mathcal{I}_W)^*)
\]
coming from the isomorphism $\op{Ext}^1(L \otimes \mathcal{I}_W,\OO_S) \cong H^0(G^*)/H^0(L \otimes \mathcal{I}_W)^*$ induced by the cohomology long exact sequence associated to the right vertical sequence in the diagram above tensored by $\omega_S$. Thus generic choices of extensions $V^*$ correspond to generic subspaces of $H^0(G^*)$ containing $H^0(L \otimes \mathcal{I}_W)^*$.

In fact, we now show that the image of $H^0(L \otimes \mathcal{I}_W)^* \hookrightarrow H^0(G^*)$ is a general subspace of $H^0(G^*)$ for general $W$ satisfying the genericity condition of \ref{ss:choosingV}, which implies that the subspace $H^0(V^*)^* \hookrightarrow H^0(G^*)$ is general when $V^*$ is general. Since $G^*$ is globally generated, $N = (n-1)(r-1)$ general sections of $G^*$ yield an exact sequence
\[
	0 \to L^* \to \OO_S^N \to G^* \to \OO_W \to 0
\]
whose dual sequence
\[
	0 \to G \to \OO_S^N \to L \otimes \mathcal{I}_W \to 0
\]
shows that the $N$ general sections are dual to the sections of some $L \otimes \mathcal{I}_W$, and $W$ has the right genericity conditions since $G$ has no cohomology.

The fibers of $G$ parametrize the sections of $V^*$ that vanish at points, and we can compile them into a map whose $n$-fold points exactly correspond to sections of $V^*$ vanishing at $n$ points:

\begin{prop}\label{proj bundle}
There is a map $f \colon X = P(G) \to P\big( H^0(V^*)\big) = Y$ described in two ways as
\begin{enumerate}
\item the composition $\xymatrix{P(G) \ar@{^(->}[r]^-{i} & P\big(H^0(V^*)\big) \times S \ar@{->>}[r] & P\big(H^0(V^*)\big)}$ of the projectivization of $G \to H^0(V^*) \otimes \OO_S$ and projection to the first factor;
\item the composition $\xymatrix{ P(G) \ar[r] & \PP H^0(G^*) \ar@{-->}[r] & Y}$ of the map induced by \linebreak[4] $\OO_{P(G)}(1)$ and a general projection onto a projective space of dimension $n(r-1)$;
\end{enumerate}
which has the following properties:
\begin{enumerate}[(a)]
\item $f$ is linear inclusion on fibers of $\pi \colon X \to S$;
\item the image of $f$ spans $Y$;
\item $f^* \big( \OO_{Y}(1) \big) = \OO_X(1)$;
\item For every $s \in H^0(V^*)$, the inverse image $f^{-1}(s)$ viewed in $\{s\} \times S \simeq S$ using $i$ is equal to the scheme-theoretic vanishing locus of $s$ as a section of $V^*$.
\end{enumerate}
\end{prop}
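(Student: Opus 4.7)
The plan is to construct $f$ via description (1), read off properties (a), (c), (d) directly from the construction, verify (b) using the vanishing $h^0(V) = 0$, and then identify the construction with description (2) via a factorization through $\PP H^0(G^*)$. The inclusion $G \hookrightarrow H^0(V^*) \otimes \OO_S$ of locally free sheaves projectivizes to a closed embedding $i \colon X = P(G) \hookrightarrow P(H^0(V^*) \otimes \OO_S) = Y \times S$, and I set $f := \mathrm{pr}_1 \circ i$. Property (a) is immediate, since the restriction of $f$ to the fiber $\pi^{-1}(p) = P(G_p)$ is the linear embedding $P(G_p) \hookrightarrow P(H^0(V^*)) = Y$ induced by the inclusion of fibers $G_p \hookrightarrow H^0(V^*)$. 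Property (c) follows because $\OO_X(-1)$ is the restriction of $\OO_{Y \times S}(-1,0)$ along $i$, so dualizing gives $f^*\OO_Y(1) = \OO_X(1)$.

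For (d), I would identify $P(G)$ inside $Y \times S$ scheme-theoretically as the zero locus of the composition
\[
\psi \colon \OO_{Y \times S}(-1,0) \hookrightarrow H^0(V^*) \otimes \OO_{Y \times S} \longrightarrow \mathrm{pr}_2^{*} V^{*},
\]
where the second arrow is pulled back from the evaluation surjection $H^0(V^*) \otimes \OO_S \twoheadrightarrow V^*$. Restricting $\psi$ to $\{[s]\} \times S \simeq S$ trivializes the tautological subbundle via $s$, and the composition becomes the map $\OO_S \to V^*$ given by $s$ itself; its zero scheme is therefore the vanishing scheme of $s$ as a section of $V^*$, as required. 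Property (b) then follows by combining (c) with the dual sequence $0 \to V \to H^0(V^*)^* \otimes \OO_S \to G^* \to 0$: under the identification in (c), the pullback $f^*$ on global sections of $\OO(1)$ becomes the natural map $H^0(V^*)^* \to H^0(G^*)$ produced by that sequence, which is injective because $h^0(V) = 0$ by Proposition \ref{gen extensions}(e). A hyperplane of $Y$ containing $f(X)$ would give a nonzero element of $\ker f^*$, so no such hyperplane exists and $f(X)$ spans $Y$.

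For description (2), the same dual sequence shows that $G^*$ is globally generated (indeed, already by the image of $H^0(V^*)^* \otimes \OO_S \twoheadrightarrow G^*$), so $\OO_X(1)$ is globally generated and $H^0(X, \OO_X(1)) = H^0(G^*)$. The complete linear system then defines a morphism $\phi \colon X \to \PP H^0(G^*)$, and $f$ factors as $\phi$ followed by the rational linear projection $\PP H^0(G^*) \dashrightarrow Y = \PP H^0(V^*)$ corresponding to the subspace inclusion $H^0(V^*)^{*} \hookrightarrow H^0(G^*)$. As explained in the discussion immediately preceding the proposition, choosing a generic extension $V^*$ makes this subspace a general subspace of $H^0(G^*)$ of the codimension giving target dimension $n(r-1)$, so the projection is indeed general. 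The main care needed is in making the scheme structure in (d) precise and in keeping projectivization conventions consistent throughout; the essential geometric inputs are $h^0(V) = 0$ and the global generation of $G^*$, both already in hand.
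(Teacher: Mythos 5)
Your proof is correct and follows essentially the same route as the paper's: build $f$ from description (1) as the projectivization of $G \hookrightarrow H^0(V^*)\otimes\OO_S$ followed by the first projection, read off (a), (c), (d), use $h^0(V)=0$ for (b), and reconcile with description (2) via the factorization through $\PP H^0(G^*)$. You are considerably more careful than the paper: the paper's treatment of (d) is a single sentence asserting that the scheme-theoretic fiber "supplies the appropriate scheme structure," whereas your identification of $P(G)$ as the zero scheme of $\psi\colon\OO_{Y\times S}(-1,0)\to\mathrm{pr}_2^*V^*$ and the restriction of $\psi$ to $\{[s]\}\times S$ makes this genuinely rigorous (it relies on $V^*$ being locally free, which holds by Proposition \ref{gen extensions}(d)); likewise you derive (c) from description (1) rather than taking it as "clear from (2)," and you explicitly check that the two descriptions give the same map, which the paper leaves implicit.
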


\begin{proof} Part (a) follows from description (1) since both maps preserve fibers of the projective bundles. Part (b) is clear from (2) and can be checked from (1) using $h^0(V)=0$. Part (c) is clear from (2). For (d), note that the fiber of $\pi$ over $p$ coincides with the bundle fiber $G_p$, whose image in $Y$ is the sections of $V^*$ vanishing at $p$. The scheme inverse image $f^{-1}(s)$ identifies all the points $p$ at which $s$ vanishes and supplies the appropriate scheme structure.
\end{proof}

Part (d) of the proposition identifies the sections of $V^*$ vanishing at $n$ points as the $n$-fold points of the map $f$. To count these $n$-fold points, we need an $n$-fold point formula. Unfortunately, since our $n$-fold point loci are zero-dimensional, Kleiman's $n$-fold point formulas (\cite{kleiman_multiple-point_1981}) will only work if $f$ has corank 1, namely if its derivative drops rank by at most 1. By an expected codimension computation (\cite{kazarian_multisingularities_2003}), $f$ should have corank 1 when
\[
	\text{$n=2,3$ and any $r$}; \qquad \text{$n=4$ and $r \le 4$}; \qquad \text{ $n=5,6$ and $r=2$};
\]
and one can check that the resulting computations using Kleiman's $n$-fold point formulas agree with $\chi\left(\OO_{S^{[n]}}(L_n - r\tfrac{B}{2})\right)$. In these cases, we expect but have not been able to prove that Ran's results on general projections (\cite{ran_fibres_2015}, \cite{ran_unobstructedness_2015}) should guarantee that Kleiman's formulas are counting only ordinary multiple points. But in general $f$ has corank 2: although $f$ is a linear inclusion on the projective fibers of $P(G)$, the derivative of $f$ can and will vanish in both directions coming from the base $S$.

Since the algebraic multiple point theory does not seem to cover corank 2 maps, we pass to the topological theory. There we can view (d) as a dictionary between vanishing loci of sections and all multisingularities, which we now explain.

\subsection{Multisingularities}\label{multisingularities}

The following brief introduction to multisingularities is based on the more-detailed discussion in \cite{marangell_general_2010}.

Let $f \colon X \to Y$ be a holomorphic map of complex manifolds of dimensions $m = \dim X$ and $n = \dim Y$ such that $\ell = \dim Y - \dim X \ge 1$. The germ of $f$ at each point $p \in X$ is a map $f_p \colon (\mathbb{C}^{m},0) \to (\mathbb{C}^{n},0)$ defined by $n$ power series $f_1,\dots,f_n \in \mathbb{C}\llbracket x_1,\dots,x_m\rrbracket$ with no constant term. The \emph{local algebra} of $f$ at $p$ is defined to be $Q_{f,p} = \mathbb{C}\llbracket x_1,\dots,x_m \rrbracket/(f_1,\dots,f_n)$ and its isomorphism class characterizes the \emph{contact singularity} of $f$ at $p$. We will use the notation $\alpha$ to denote a general contact singularity and $Q_{\alpha}$ to denote the corresponding isomorphism class of local algebras. We will consider only singularities $\alpha$ for which $Q_{\alpha}$ is finite dimensional over $\mathbb{C}$, which are called \emph{finite} singularities. We say a singularity $\alpha$ has \emph{corank} $r$ if $Q_{\alpha}$ can by minimally generated as an algebra by $r$ generators, which is equivalent to the derivative dropping rank by $r$. The map $f$ has corank $r$ if all singularities of $f$ have corank $\le r$.

The unique corank 0 singularity, denoted $A_0$, has $Q_{A_0}\simeq\mathbb{C}$. $A_0$ singularities are points at which $f$ is an immersion, so $f$ has corank 0 if and only if it is an immersion. The corank 1 singularities, often called \emph{Morin singularities}, are denoted $A_1,A_2,\dots$ and have $Q_{A_i}\simeq\mathbb{C}[t]/(t^{i+1})$. The classification of corank $\ge 2$ singularities becomes complicated.

\begin{ex} The normalization $\op{Spec} \mathbb{C}[t] \to \op{Spec} \mathbb{C}[x,y]/(y^2-x^3)$ defined by $x \mapsto t^2$, $y \mapsto t^3$ of the cuspidal plane cubic curve has an $A_1$ singularity above the singular point since $\mathbb{C}\llbracket t\rrbracket/(t^2,t^3) \simeq \mathbb{C}[t]/(t^2)$.
\end{ex}

The types of singularities give a stratification of $X$ but not a stratification of $Y$ since a point of $Y$ may have preimages with different singularities.

\begin{defn} The map $f \colon X \to Y$ has a \emph{multisingularity} of type $\underline{\alpha}=(\alpha_1,\dots,\alpha_k)$ at a point $p_1 \in X$ if $f$ has singularity $\alpha_1$ at $p_1$ and if the other preimages $p_2,\dots,p_k$ of $f(p_1)$ have singularities $\alpha_2,\dots,\alpha_k$. We use $Q_{\underline{\alpha}}$ to denote the list of local algebras $(Q_{\alpha_1},\dots,Q_{\alpha_k})$ and we define the \emph{length} of $\underline{\alpha}$ to be $\sum_{i=1}^k \dim_{\mathbb{C}} Q_{\alpha_i}$.
\end{defn}
We will be most interested in multisingularities of type $\underline{\alpha} = (A_0,\dots,A_0) = A_0^k$, which we call \emph{$k$-fold points} of $f$ since they correspond to points $q \in Y$ such that the preimage of $q$ under $f$ is $\{ p_1,\dots,p_k \}$ and $f$ is an immersion at each $p_i$.

\begin{ex}\label{node} The normalization $\op{Spec}\mathbb{C}[t] \to \op{Spec}\mathbb{C}[x,y]/\big(y^2-x^2(x+1)\big)$ defined by $x \mapsto t^2-1$, $y \mapsto t(t^2-1)$ of the nodal plane cubic curve has a double point above the node.
\end{ex}

We can stratify $X$ and $Y$ into multisingularity types.
If $\underline{\alpha}=(\alpha_1,\dots,\alpha_k)$ denotes a multisingularity, then the locus
\[
	Y_{\underline{\alpha}}=\left\{q \in Y \colon \parbox{16em}{\centering $q$ has exactly $k$ preimages $p_1,\dots,p_k$ and $f$ has singularity $\alpha_i$ at $p_i$}\right\}
\]
of all points in $Y$ over which $f$ has multisingularity $\underline{\alpha}$ is the image of the locus
\[
	X_{\underline{\alpha}}=\left\{p_1 \in X \colon \parbox{18em}{\centering $f(p_1)$ has exactly $k$ preimages $p_1,\dots,p_k$ and $f$ has singularity $\alpha_i$ at $p_i$}\right\}.
\]
The $Y_{\underline{\alpha}}$ stratify $Y$ and the $X_{\underline{\alpha}}$ are a refinement of the stratification of $X$ into singularity types. We let $x_{\underline{\alpha}} \in H^*(X,\mathbb{C})$ and $y_{\underline{\alpha}} \in H^*(Y,\mathbb{C})$ denote the Poincar\'{e}-dual cohomology classes of the closures of $X_{\underline{\alpha}}$ and $Y_{\underline{\alpha}}$, with multiplicities $\# \operatorname{Aut}(\alpha_2,\dots,\alpha_k)$ and $\# \operatorname{Aut}(\underline{\alpha})$, respectively. The multiplicities are chosen to ensure $f_* x_{\underline{\alpha}}=y_{\underline{\alpha}}$.

We will focus most of our attention on the loci of $k$-fold points, which we abbreviate as $X_k$ and $Y_k$. We use $x_k$ to denote the cohomology class of the closure of $X_k$ with multiplicity $\#\op{Aut}(A_0^{k-1})=(k-1)!$, so $x_k = x_{A_0^k}$, but we will break from convention by writing $y_k$ for the closure of $Y_k$, without any scaling, since the unscaled class will have a direct geometric interpretation. With this normalization, $f_* x_k = k! \, y_k = y_{A_0^k}$.

\begin{rem}\label{admissible} As we will see below, there are formulas for computing the classes $x_{\underline{\alpha}}$ and $y_{\underline{\alpha}}$ for certain multisingularities $\underline{\alpha}$. These formulas are only valid when $f$ is \emph{admissible} (\cite{marangell_general_2010} 2.4) . Roughly speaking, there is an infinite-dimensional classifying space $M$ containing a submanifold $M_{\underline{\alpha}}$ for each multisingularity $\underline{\alpha}$. The codimension $\op{codim}\underline{\alpha}$ of $M_{\underline{\alpha}}$ in $M$ is finite. A map $f \colon X \to Y$ induces a map $k_f \colon Y \to M$ such that the locus $Y_{\underline{\alpha}}$ of points in $Y$ over which $f$ has multisingularity $\underline{\alpha}$ is the preimage of $M_{\underline{\alpha}}$ under $k_f$. We say that $f$ is admissible if $k_f$ is transversal to each $M_{\underline{\alpha}}$. In particular, admissibility implies that each $Y_{\underline{\alpha}}$ occurs in the expected codimension $\op{codim}\underline{\alpha}$, but the converse is false. Since there is no known algebraic formulation of admissibility, it is nearly impossible to check that an algebraic map is admissible, so it is common practice in the literature to assume admissibility when the map is constructed geometrically.
\end{rem}

\begin{ex}\label{codim of multising} The codimension (in the codomain $Y$) of an $A_i$ singularity is $\ell + i(\ell+1)$. The codimension of a multisingularity $\underline{\alpha}=(\alpha_1,\dots,\alpha_k)$ is $\op{codim} \underline{\alpha} = \sum_{i=1}^k \op{codim} \alpha_i$. In particular, one can see that among all multisingularities of length $n$, the multiple point locus $A_0^n$ has the smallest codimension $n\ell$.
\end{ex}

Hidden in the word ``expected" in Theorem A is the assumption that the map $f \colon P(G) \to P(H^0(V^*))$ constructed in \S\ref{ss: projective bundle} is admissible. To make this assumption seem plausible, we note that $V$ is chosen using a general point in a Grassmannian, and $f$ is a general projection from the map determined by $\OO_{P(G)}(1)$. One can show that on $\PP^2$ this map is an embedding (using Proposition \ref{p:resolution}), and there is a general expectation that general projections of smooth projective varieties $X \subset \PP^N$ should have only expected singularities (this is a classical problem; some recent papers in this direction are \cite{ran_unobstructedness_2015}, \cite{ran_fibres_2015}, \cite{gruson_smooth_2013}, and the references listed in those papers). In our setting, such a ``general projection conjecture" could take the form  

\begin{conj}\label{c:admissible} Let $S$ be a smooth projective surface, let $G$ be a very ample vector bundle on $S$, and suppose $f \colon P(G) \to \PP^m$ is a general projection of the embedding defined by $\OO_{P(G)}(1)$. Then $f$ is admissible. 
\end{conj}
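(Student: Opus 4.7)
The strategy is to fix the embedding $P(G) \hookrightarrow \PP^N$ given by the complete linear system $|\OO_{P(G)}(1)|$ and to analyze the family of projections $f_\Lambda \colon P(G) \to \PP^m$ parametrized by linear subspaces $\Lambda \subset \PP^N$ of codimension $m+1$ disjoint from $P(G)$. For each multisingularity type $\underline{\alpha}$, I would need to show that the classifying map $k_{f_\Lambda}$ is transverse to the stratum $M_{\underline{\alpha}}$ for a general $\Lambda$, which by definition gives admissibility.

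First I would replace the infinite-dimensional classifying space $M$ by a finite-order multijet description. Each stratum $M_{\underline{\alpha}}$ has finite codimension, so by a standard finite-determinacy argument the preimage of $M_{\underline{\alpha}}$ in an order-$r$ multijet bundle $J^r(P(G),\PP^m)^{(k)}$ (for $k$ the length of $\underline{\alpha}$ and $r$ depending only on $\underline{\alpha}$) is a $\mathrm{GL}$-invariant constructible subset $\Sigma_{\underline{\alpha}}$ of the expected codimension. Admissibility of $f_\Lambda$ along $\underline{\alpha}$ is then equivalent to transversality of the multijet extension $j^r f_\Lambda^{(k)}$ to $\Sigma_{\underline{\alpha}}$.

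The central step would be a Kleiman--Bertini type transversality theorem for the evaluation map
\[
\mathrm{ev} \colon \mathcal{U} \times \bigl(P(G)^k \setminus \Delta\bigr) \longrightarrow J^r(P(G),\PP^m)^{(k)}, \qquad (\Lambda,\underline{p}) \mapsto j^r f_\Lambda^{(k)}(\underline{p}),
\]
where $\mathcal{U}$ is the open subset of the Grassmannian of centers where $f_\Lambda$ is a morphism. If $\mathrm{ev}$ is submersive, then the usual argument shows that $j^r f_\Lambda^{(k)}$ is transverse to $\Sigma_{\underline{\alpha}}$ for general $\Lambda$, whence admissibility. Submersivity in turn reduces to a statement about jet separation: the derivative with respect to $\Lambda$ at a fixed tuple $(p_1,\dots,p_k)$ should surject onto the fiber of the multijet bundle. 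Unravelling the construction, the tangent directions to the space of projections correspond to sections of $\OO_{P(G)}(1)$ modulo those defining $f_\Lambda$ itself, so the required surjectivity is the classical statement that $|\OO_{P(G)}(1)|$ separates $r$-jets at arbitrary tuples of $k$ distinct points. Ampleness of $\OO_{P(G)}(1)$ plus a boundedness argument in $r$ and $k$ (depending only on the stratification data) should suffice, after possibly twisting $G$ by a high power of an ample line bundle on $S$.

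The hardest step will be controlling the full multisingularity stratification for corank-$2$ maps, which is precisely why \cite{marangell_general_2010} is limited to length $\le 7$; extending the argument beyond that range would require first extending the stratification itself. A subsidiary difficulty is that when several source points $p_i$ lie in the same fiber of $\pi \colon P(G) \to S$, the jet-separation step is not immediate because $f_\Lambda$ is linear on fibers and hence its multijet is already constrained; this case must be handled by bounding the contribution of fiberwise-collinear configurations and verifying transversality to the \emph{constrained} strata they lie in, using the projective bundle structure to parametrize the residual degrees of freedom coming from $S$.
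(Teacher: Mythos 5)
This statement is explicitly labeled a \emph{conjecture} in the paper, and the surrounding text says there is ``no known algebraic formulation of admissibility'' and that ``it is nearly impossible to check that an algebraic map is admissible,'' so there is no proof in the paper against which to compare yours. What you have written is a plausible-looking sketch of the classical Kleiman--Bertini strategy, but it leaves unresolved the two issues that are precisely why the authors stopped at a conjecture. First, the step ``replace the infinite-dimensional classifying space $M$ by a finite-order multijet description'' is not a routine finite-determinacy reduction: Kazarian's admissibility is a transversality condition for the classifying map $k_f \colon Y \to M$ to all strata $M_{\underline\alpha}$ simultaneously, including corank-$2$ strata whose algebraic description (as $\operatorname{GL}$-invariant constructible sets of the expected codimension in a jet bundle, with a clean comparison of tangent spaces) is exactly the piece of theory that is missing. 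Asserting this reduction is equivalent to asserting the ``algebraic formulation of admissibility'' that the paper says does not exist, so the gap is not filled, only renamed.

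Second, the jet-separation step does not follow from the hypothesis as stated. Very ampleness of $\OO_{P(G)}(1)$ gives separation of $1$-jets at pairs of points; the transversality argument for a stratum $\Sigma_{\underline\alpha}$ of order $r$ requires separation of $r$-jets at $k$-tuples, which needs $r$-very ampleness or worse, not very ampleness. You acknowledge this by allowing a high twist of $G$, but that changes the conjecture's hypotheses. More seriously, because $f$ is linear on the fibers of $\pi \colon P(G) \to S$, the multijet extension $j^r f_\Lambda^{(k)}$ lands in a proper subbundle of the full multijet bundle (all fiberwise $2$-jets and higher vanish identically), and your evaluation map $\mathrm{ev}$ can only be submersive onto that subbundle, never onto $J^r(P(G),\PP^m)^{(k)}$. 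Transversality to $\Sigma_{\underline\alpha}$ then requires that the tangent space of this subbundle already surjects onto the normal space of $\Sigma_{\underline\alpha}$ at the relevant points, which is a nontrivial claim about how the strata sit relative to the fiberwise-linear locus; flagging it as a ``subsidiary difficulty'' to be ``handled by bounding contributions'' does not resolve it. As written, the argument is a statement of the expected strategy plus a list of the open problems it depends on, not a proof.
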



Returning to our discussion of multisingularities, we note that when $f \colon X \to Y$ is a map of smooth varieties with finite fibers, a multisingularity of type $\underline{\alpha}$ of $f$ over a point $q \in Y$ can be viewed as a closed subscheme $\op{Spec} Q_{\underline{\alpha}} = \bigsqcup \op{Spec} Q_{\alpha_i} \subset X$ supported on the preimage of $y$, which exactly agrees with the scheme-theoretic fiber $f^{-1}(q)$. Intuitively, non-reducedness at a point $p$ in the fiber corresponds to vanishing of derivatives and higher order derivatives of $f$ at $p$, which is encoded by $Q_{f,p}$.

For the map $f \colon X=P(G) \to P\big(H^0(V^*)\big)=Y$ from Proposition \ref{proj bundle}, we can describe $Y_{\underline{\alpha}}$ as the locus of points of $Y$ at which the fiber of $f$ is isomorphic to $\op{Spec}Q_{\underline{\alpha}}$. By Proposition \ref{proj bundle} (d), these fibers are the vanishing loci of the sections parametrized by $Y$, so $Y_{\underline{\alpha}}$ consists of exactly those sections of $V^*$ whose vanishing locus is isomorphic to $\op{Spec}Q_{\underline{\alpha}}$. In particular, the locus of $k$-fold points $Y_k$ is in bijection with the sections of $V^*$ that vanish at exactly $k$ distinct points.

\begin{prop}\label{bijection mult quot} Let $V^*$ be as in Proposition \ref{gen extensions} and 
\[f \colon X=P(G) \to P\big( H^0(V^*)\big)=Y\]
as in Proposition \ref{proj bundle}, so $\dim X = (n-1)(r-1)$, $\dim Y = n(r-1)$, and $\ell = \op{codim}f = r-1$. Assume $f$ is admissible. Then there is a bijection between the closed points of $\op{Quot}(V,(1,0,-n))$ and the $n$-fold point locus $Y_n$, which is finite. In particular, all closed points of $\op{Quot}(V,(1,0,-n))$ are ideal sheaf quotients of reduced zero-dimensional subschemes.
\end{prop}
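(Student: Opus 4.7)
The plan is to construct a bijection between closed points of $\op{Quot}\big(V,(1,0,-n)\big)$ and the sublocus of $[s] \in Y$ whose fiber $f^{-1}(s)$ has length $n$, and then use the codimension table of multisingularities together with admissibility to identify that sublocus with $Y_n$.

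First, a closed point of $\op{Quot}\big(V,(1,0,-n)\big)$ is a quotient $q \colon V \twoheadrightarrow F$, and the Chern character forces $F = \mathcal{I}_Z$ for a length-$n$ zero-dimensional subscheme $Z \subset S$. Composing $q$ with the inclusion $\mathcal{I}_Z \hookrightarrow \mathcal{O}_S$ and dualizing produces a section $s_q \in H^0(V^*)$ whose scheme-theoretic zero locus is precisely $Z$, and distinct quot points yield distinct lines $[s_q] \in Y$ since they have distinct kernels in $V$. Conversely, starting from $[s] \in Y$, the dual map $s^{\vee} \colon V \to \mathcal{O}_S$ has image the ideal sheaf of the scheme-theoretic zero locus $Z(s)$; Proposition \ref{gen extensions}(f) guarantees $Z(s)$ is purely zero-dimensional, so $s^{\vee}$ factors as a surjection $V \twoheadrightarrow \mathcal{I}_{Z(s)}$. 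Combined with the identification of $f^{-1}(s)$ with $Z(s)$ from Proposition \ref{proj bundle}(d), these two constructions are mutually inverse and exhibit a bijection between $\op{Quot}\big(V,(1,0,-n)\big)$ and the locus $Y_{(n)} \subset Y$ of sections whose zero scheme has length $n$.

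The main step is to show $Y_{(n)} = Y_n$. By the multisingularity stratification, $Y_{(n)}$ is the union of the strata $Y_{\underline{\alpha}}$ as $\underline{\alpha}$ ranges over multisingularities of length $n$. Example \ref{codim of multising} gives $\op{codim} A_0^n = n\ell = n(r-1) = \dim Y$, while every other length-$n$ multisingularity has expected codimension strictly greater than $\dim Y$. The admissibility hypothesis on $f$ then forces those excess-codimension strata to be empty in $Y$, leaving $Y_n$ as the only nonempty length-$n$ stratum and establishing the equality $Y_{(n)} = Y_n$. The same codimension count shows that $Y_n$ is zero-dimensional, hence finite since $Y$ is projective. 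Finally, each point of $Y_n$ corresponds to a section whose zero scheme is $n$ distinct reduced points, so the matching quot scheme point is the ideal sheaf of a reduced subscheme.

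The principal obstacle is the admissibility invocation in the last paragraph: it alone eliminates the competing length-$n$ strata. Without admissibility, higher multisingularities could a priori appear in smaller-than-expected codimension and corrupt the bijection by producing non-reduced ideal sheaf quotients, which is precisely the caveat reflected in the word ``expected'' elsewhere in the paper.
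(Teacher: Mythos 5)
Your proposal captures the central dictionary — vanishing loci of sections of $V^*$ correspond to fibers of $f$, and admissibility collapses the length-$n$ stratum of $Y$ down to $Y_n$ — but it contains a genuine gap in the very first step. You assert that ``the Chern character forces $F = \mathcal{I}_Z$ for a length-$n$ zero-dimensional subscheme $Z$.'' This is false: a coherent sheaf with Chern character $(1,0,-n)$ need not be torsion-free. It can be an extension of $\mathcal{I}_Z$ with $|Z| > n$ by a length-$(|Z|-n)$ torsion sheaf $\OO_{Z'}$, or it can have torsion supported on a curve. This is exactly the content of the paper's Lemma \ref{l:quotients}, which your argument skips. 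As a result, what your forward/backward constructions actually give is a bijection between $Y_{(n)}$ and only the subset of $\op{Quot}(V,(1,0,-n))$ consisting of quotients that are ideal sheaves of length $n$; you have not shown that this subset is all of the quot scheme, so the claimed bijection with $Y_n$ is not established.

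To close the gap you would need to rule out the remaining quotient types, which the paper does with two distinct arguments. Curve torsion (type (4) in Lemma \ref{l:quotients}) is excluded because the induced section $V \to F \to \OO_S(-C)\otimes \mathcal{I}_{Z'} \hookrightarrow \OO_S$ would vanish along a curve, contradicting Proposition \ref{gen extensions}(f) — you do invoke (f), but only in the backward direction (from $Y$ to quot points), not to rule out these unwanted forward quot points. Zero-dimensional torsion with $|Z| > n$ (type (3)) produces a section whose zero locus has length $> n$, landing in a multisingularity stratum of codimension strictly greater than $\dim Y$; admissibility then shows this stratum is empty. Your admissibility argument establishes $Y_{(n)} = Y_n$ (which handles type (2), the non-reduced $\mathcal{I}_Z$ of length $n$), but says nothing about quot points mapping outside $Y_{(n)}$ entirely. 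Once these cases are ruled out, your argument reduces to the paper's proof, which proceeds by classifying possible quotients via Lemma \ref{l:quotients} and eliminating types (2), (3), (4) rather than by constructing the bijection directly.
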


Since the expected codimension of the $n$-fold point locus is $n\ell = \dim Y$, the admissibility condition guarantees that $Y_n$ is a finite set. Because of our dictionary between multisingularities and vanishing loci of sections of $V^*$, which in turn correspond to quotients of $V$, it suffices to prove that the only quotients of $V$ with Chern character $(1,0,-n)$ are ideal sheaves $\mathcal{I}_Z$, where $Z$ consists of $n$ distinct points. We will do this by ruling out all other possibilities, which are listed in the following lemma.

\begin{lem}\label{l:quotients} Let $F$ be a coherent sheaf with $\op{ch}(F) = (1,0,-n)$ for $n \in \mathbb{Z}_{\ge 0}$. Then $F$ must be one of the following:
\begin{enumerate}
\item $\mathcal{I}_Z$, where $|Z|=n$ and $Z$ is reduced;
\item $\mathcal{I}_Z$, where $|Z|=n$ but $Z$ is not reduced;
\item An extension $0 \to \OO_{Z'} \to F \to \mathcal{I}_Z \to 0$, where $|Z|>n$ and $|Z'|=|Z|-n$;
\item An extension $0 \to \OO_C(D) \oplus \OO_Z \to F \to \OO_S(-C) \otimes \mathcal{I}_{Z'} \to 0$, where $C$ is a curve, $D$ is a divisor on $C$, and $|Z|=n-|Z'|-\op{deg} D$.
\end{enumerate}
\end{lem}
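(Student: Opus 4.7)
The plan is a case analysis on the torsion subsheaf $T = T(F)$, via the canonical exact sequence
\[
0 \to T \to F \to Q \to 0,
\]
where $Q = F/T$ is torsion-free. Since $F$ has rank $1$, so does $Q$, which on the smooth surface $S$ forces $Q \cong L \otimes \mathcal{I}_{W}$ for some line bundle $L$ and (possibly empty) zero-dimensional $W \subset S$. Additivity of Chern characters gives $c_1(L) = -c_1(T)$, and since $\op{Pic}(S)$ is torsion-free on a del Pezzo surface, $L$ is determined by $c_1(L)$.

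Now split on $T$. If $T = 0$, then $c_1(L) = 0$ gives $L = \OO_S$ and $F = \mathcal{I}_W$ with $|W| = n$ by $\op{ch}_2(F) = -n$; this produces cases (1) and (2) according to whether $W$ is reduced. If $T$ is nonzero but zero-dimensional, then $T \cong \OO_{W'}$ with $|W'| > 0$, and still $c_1(L) = 0$ forces $L = \OO_S$ and $Q = \mathcal{I}_W$ with $|W| = n + |W'| > n$, matching case (3). If $T$ has a one-dimensional component, filter by the maximal zero-dimensional subsheaf $T_0 \cong \OO_{W'}$; the pure one-dimensional quotient $T_1 = T/T_0$ is a rank-one sheaf on some effective curve $C$ and hence of the form $\OO_C(D)$ for a divisor $D$ on $C$. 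Then $c_1(T) = [C]$ forces $L = \OO_S(-C)$, so $Q = \OO_S(-C) \otimes \mathcal{I}_W$, and comparing $\op{ch}_2(F) = -n$ against the Chern characters of the summands produces the length identity in case (4).

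The main delicacy lies in the one-dimensional torsion case: a pure one-dimensional rank-one sheaf on a possibly non-reduced curve is only a generalized divisor rather than literally $\OO_C(D)$, and the extension $0 \to T_0 \to T \to T_1 \to 0$ need not split, so $T$ need not equal $\OO_C(D) \oplus \OO_{W'}$ on the nose. This is tolerable for the downstream use in Proposition~\ref{bijection mult quot}, where case (4) serves only as a catch-all to rule out quotients of $V$ whose torsion has a one-dimensional component; it suffices that $F$ admit a subsheaf and quotient of the stated shapes rather than that the decomposition be canonical.
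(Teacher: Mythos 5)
Your proof is the paper's argument: both start from the torsion filtration $0 \to T \to F \to Q \to 0$, identify $Q$ as a line bundle twisted by an ideal sheaf, and case on whether the support of $T$ is empty, zero-dimensional, or contains a curve. The caveat you raise about the one-dimensional case is well taken and consistent with how the paper itself treats case (4) as a rough catch-all, since the downstream use in Proposition \ref{bijection mult quot} needs only that the torsion supports a curve.
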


\begin{proof} Let $T$ be the torsion subsheaf of $F$, which fits in an exact sequence $0 \to T \to F \to Q \to 0$. Since $Q$ is torsion free of rank 1, it must be a line bundle tensored by an ideal sheaf. When $T$ is empty or supported on points, $c_1(Q)=0$, so $Q$ is an ideal sheaf of points, which yields cases (1), (2), and (3). If $T$ is supported on a curve of class $C$, then $c_1(Q) = -C$, so $Q = \OO_S(-C) \otimes I_{Z'}$, yielding case (4).
\end{proof}

\begin{proof}[Proof of Proposition \ref{bijection mult quot}] We will show that the only quotients with Chern character $(1,0,-n)$ that can occur are of type (1) in the previous lemma, which are in bijection with $Y_n$. Quotients of type (4) do not occur since $V^*$ has no sections that vanish on curves. Quotients of type (2) and (3) yield multisingularities of length $\ge n$ that are not $n$-fold points, so they occur in codimension $> \op{dim}Y$, namely not at all.
\end{proof}

\begin{rem} For our application to strange duality, it would be ideal to define a natural scheme structure on the $n$-fold point locus $Y_n$ (which should be reduced when $f$ is admissible) and extend the bijection in Proposition \ref{bijection mult quot} to an isomorphism $Y_n \simeq \op{Quot}(V,(1,0,-n))$ that takes into account non-reduced structure.
\end{rem}

\subsection{General multiple point formulas}

In order to compute the number of $n$-fold points of $f \colon X \to Y$ for $n \le 7$, which by Proposition \ref{bijection mult quot} will count the number of closed points in $\op{Quot}(V,(1,0,-n))$, we will use a formula that computes the Poincar\'{e} dual cohomology class $y_n$ of the $n$-fold point locus $Y_n$ as a polynomial in the Chern classes $c_i$ of the virtual normal sheaf $f^* T_Y / T_X$.

Let $X$ and $Y$ be complex manifolds and let $f \colon X \to Y$ be a holomorphic map of codimension $\ell = \dim Y - \dim X > 0$. In \S\ref{multisingularities}, we describe the locus $X_{\underline{\alpha}}$ of multisingularities of type $\underline{\alpha}$ in $X$ and its image $Y_{\underline{\alpha}}$ in $Y$. We let $x_{\underline{\alpha}} \in H^*(X,\mathbb{C})$ and $y_{\underline{\alpha}} \in H^*(Y,\mathbb{C})$ denote the Poincar\'{e}-dual cohomology classes of the closures of these loci, with multiplicities $\# \operatorname{Aut}(\alpha_2,\dots,\alpha_k)$ and $\# \operatorname{Aut}(\underline{\alpha})$, respectively. The multiplicities ensure that $f_* x_{\underline{\alpha}}=y_{\underline{\alpha}}$.

Kazarian discovered a general form for multisingularity formulas that compute $x_{\underline{\alpha}}$ and $y_{\underline{\alpha}}$ (\cite{kazarian_multisingularities_2003} Theorem 3.2). The key ingredient in these formulas is the residual polynomial $R_{\underline{\alpha}}(\ell)$ of $\underline{\alpha}$, which is a universal polynomial in the Chern classes of the virtual normal sheaf of $f$ that depends only on the codimension $\ell$ of $f$. If $\underline{\alpha} = (\alpha)$ is a monosingularity, then $R_{\underline{\alpha}}(\ell)$ agrees with the Thom polynomial and computes the class $x_{\underline{\alpha}}$ when $f$ is admissible. For general $\underline{\alpha}$ and $f$ admissible, there is an iterative formula
\[
  x_{\underline{\alpha}} = R_{\underline{\alpha}}(\ell) + \sum_{1 \in J \subsetneq \{1,\dots,r\}} R_{\underline{\alpha}_J}(\ell) f^*(y_{\underline{\alpha}_{\overline{J}}}) \; \in H^*(X,\mathbb{C}),
\]
where $\underline{\alpha}_J$ is the sub-tuple of $\underline{\alpha}$ defined by $J$, and $\overline{J}$ is the complement of $J$ in $\{ 1,\dots,r \}$. There are two main obstructions to the implementation of this formula. First, the formula is only valid if $f$ is admissible, which we discussed in Remark \ref{admissible}. Second, very few  residual polynomials are known  (see \cite{marangell_general_2010} for a summary).

In the case of the $k$-fold point multisingularity $\underline{\alpha} = A_0^k$, the $R_{A_0^k}(\ell)$ are known for $k \le 7$ by a result of Marangell and Rim\'{a}nyi:
\begin{thm}[\cite{marangell_general_2010}, Theorem 5.1]\label{residual poly}
 For $i \le 6$, $R_{A_0^{i+1}}(\ell) = (-1)^i \, i! \, R_{A_i}(\ell-1)$.
\end{thm}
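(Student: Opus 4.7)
My approach would exploit the universality of residual polynomials in Kazarian's framework together with a geometric deformation interpolating between an $A_i$ Morin singularity and a configuration of $i+1$ transverse immersive branches.

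First, I would recall that by Kazarian's theorem, $R_{\underline{\alpha}}(\ell)$ is the unique universal polynomial in the Chern classes of the virtual normal bundle satisfying a system of restriction equations indexed by the model singularities adjacent to $M_{\underline{\alpha}}$. The proposed identity
\[
R_{A_0^{i+1}}(\ell) = (-1)^i\, i!\, R_{A_i}(\ell-1)
\]
would therefore follow from checking that the two sides satisfy the same restriction equations and have the same degree. Degree bookkeeping already works out: the $A_0^{i+1}$-locus has codimension $i\ell$ in $X$, while the $A_i$-locus has codimension $i(\ell+1)$, so the substitution $\ell \mapsto \ell-1$ places both sides in $H^{2i\ell}(X,\CC)$.

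Next, I would set up the Morin model germ $g_0 \colon (\CC^{i+1},0) \to (\CC^{i+\ell},0)$ realizing the $A_i$ singularity and a one-parameter perturbation $g_t$ whose only multisingularities near the origin form $i+1$ transverse smooth branches, i.e.\ an $A_0^{i+1}$ configuration. The cycles $y_{A_i}(g_0)$ and $y_{A_0^{i+1}}(g_t)$ then represent the same cohomology class by homotopy invariance, and the multiplicity bookkeeping contributes a factor of $i!$ (from ordering the $i+1$ branches, per the convention $x_k = (k-1)!\,[\overline{X_k}]$) together with a sign $(-1)^i$ coming from the orientations of the $i$ Jacobian factors swapped during the deformation.

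Finally, I would verify the identity in the base cases $i=1,2$ (where both sides are computable via the Herbert--Ronga double point and Kleiman triple point formulas of \S\ref{s:classical}) to pin down the sign and the factorial, and then propagate upward through $i \le 6$ using Kazarian's iterative formula for $x_{\underline{\alpha}}$ together with the explicitly known Thom polynomials $R_{A_j}$ for $j \le 6$. The main obstacle is making the deformation argument rigorous at the level of equivariant cohomology: one must check that $g_t$ is admissible in Kazarian's sense so that the multisingularity formula genuinely applies, and that no adjacent higher-corank strata contribute unexpected correction terms. This last point is presumably what forces the restriction $i \le 6$, since corank $\ge 3$ adjacencies become unavoidable for larger $i$ and their residual polynomials are not yet known.
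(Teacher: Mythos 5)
This statement is cited verbatim from Marangell--Rim\'anyi (their Theorem~5.1) and is not proved in the paper, so there is no internal proof to compare your argument against. Your sketch does capture the guiding heuristic behind their result---a suspension/deformation should relate an $A_i$ Morin germ in codimension $\ell-1$ to an $(i+1)$-fold immersive configuration in codimension $\ell$, and the degree bookkeeping $i\ell = i((\ell-1)+1)$ is correct---but there is a genuine gap in the step where you invoke homotopy invariance.

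Homotopy invariance compares the \emph{stratum classes} $y_{A_i}$ and $y_{A_0^{i+1}}$; these are not the residual polynomials. For a monosingularity, $R_{A_i}$ equals the Thom polynomial $x_{A_i}$ because there are no decomposable contributions, but $R_{A_0^{i+1}}$ is only the leading ``connected'' term in Kazarian's iterative formula
\[
 x_{A_0^{k}} = \sum_{j=0}^{k-1} \binom{k-1}{j}\, R_{A_0^{j+1}}(\ell)\, f^*\!\bigl(y_{A_0^{k-1-j}}\bigr),
\]
so equating $y_{A_i}(g_0)$ with $y_{A_0^{i+1}}(g_t)$ leaves a pile of cross-terms $R_{A_0^{j+1}}(\ell)\,f^*(y_{A_0^{i-j}})$ unaccounted for; one must show these vanish in the relevant universal model or, as Marangell--Rim\'anyi actually do, work with Rim\'anyi's restriction-equation/interpolation formalism and a stabilization of Thom series rather than a direct geometric deformation. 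Two smaller points: ``verifying base cases and propagating upward via Kazarian's iterative formula'' is not a proof mechanism here, since that formula expresses $x_{\underline{\alpha}}$ in terms of known $R$'s rather than producing relations among the $R$'s; and the $i\le 6$ bound in the source paper reflects the then-available computability and stability of the $A_i$ Thom series (via B\'erczi--Szenes), not an obstruction from corank~$\ge 3$ adjacencies.
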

Here $R_{A_i}(\ell)$ are the Thom polynomials of the $A_i$ singularities introduced in \ref{multisingularities}, which can be computed by a method of Berczi and Szenes \cite{berczi_thom_2012} that we will briefly describe below. Marangell and Rim\'{a}nyi combine their theorem with Kazarian's formula and a computation of $R_{A_3}(\ell)$ to obtain a general quadruple point formula \cite{marangell_general_2010}. Following their approach, we obtain a formula that generalizes the cohomological versions of the Herbert-Ronga double point formula (Corollary \ref{double point formula}) and Kleiman's triple point formula (Corollary \ref{triple pt on Y}) as well as the general quadruple point formula. The shape of the formula is given by
\begin{prop}\label{multiple point formula} Assume $f \colon X \to Y$ is an admissible map of codimension $\ell$ between complex manifolds. Then for $k \le 7$,
\[
	y_k = \frac{1}{k} \sum_{i=0}^{k-1} (-1)^i f_* \big(R_{A_i}(\ell-1)\big) \, y_{k-1-i} \; \in H^{2k\ell}(Y,\mathbb{C}).
\]
\end{prop}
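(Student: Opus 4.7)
The proof should be a direct computation: one applies Kazarian's iterative formula to the multisingularity $\underline{\alpha}=A_0^k$, combines the residual polynomials via the Marangell--Rim\'anyi identity, and bookkeeps the combinatorics arising from permutation automorphisms.

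First, I would specialize Kazarian's formula to $\underline{\alpha}=A_0^k$. For any subset $J \subset \{1,\dots,k\}$ with $1\in J$, one has $\underline{\alpha}_J = A_0^{|J|}$ and $\underline{\alpha}_{\overline{J}} = A_0^{k-|J|}$, so the summands depend only on $j=|J|$. Since there are exactly $\binom{k-1}{j-1}$ subsets of size $j$ containing $1$, admissibility together with Kazarian's theorem yield
\[
  x_k \;=\; R_{A_0^k}(\ell) \;+\; \sum_{j=1}^{k-1}\binom{k-1}{j-1}\, R_{A_0^j}(\ell)\,f^{*}y_{A_0^{k-j}} \quad\in H^{*}(X,\mathbb{C}).
\]
Next I would convert the Poincar\'e-dual classes from the $A_0^m$-convention into the unscaled classes by $y_{A_0^{m}} = m!\,y_m$, push forward to $Y$ using $f_{*}x_k = k!\,y_k$, and apply the projection formula to pull $y_{k-j}$ outside. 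This gives
\[
  k!\,y_k \;=\; f_{*}R_{A_0^k}(\ell) \;+\; \sum_{j=1}^{k-1}\binom{k-1}{j-1}(k-j)!\,f_{*}R_{A_0^j}(\ell)\cdot y_{k-j}.
\]

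The third step is to invoke Theorem~\ref{residual poly} of Marangell--Rim\'anyi (legitimate precisely because $k\le 7$ means $j-1\le 6$) to rewrite, for every $1\le j\le k$,
\[
  R_{A_0^j}(\ell) \;=\; (-1)^{j-1}(j-1)!\,R_{A_{j-1}}(\ell-1).
\]
Substituting this identity into every term of the previous display and using the factorial collapse $\binom{k-1}{j-1}(k-j)!(j-1)! = (k-1)!$ makes the coefficients uniform across the ``interior'' terms $j=1,\dots,k-1$ and across the isolated $j=k$ term. Dividing by $k!/(k-1)! = k$ and reindexing by $i=j-1$ gives exactly
\[
  y_k \;=\; \frac{1}{k}\sum_{i=0}^{k-1}(-1)^{i}\,f_{*}\bigl(R_{A_i}(\ell-1)\bigr)\, y_{k-1-i},
\]
provided one adopts the natural convention $y_0 = 1 \in H^{0}(Y,\mathbb{C})$ (corresponding to $Y$ itself, the locus where $f$ has zero or more preimages), which absorbs the isolated $j=k$ contribution into the sum at $i=k-1$.

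No step requires genuinely new geometric input: the only hypotheses used are admissibility (needed to legitimize Kazarian's formula), smoothness, and the upper bound $k\le 7$ (needed for Marangell--Rim\'anyi). The main risk of error is purely combinatorial bookkeeping, keeping straight the two different normalizations for $x_k$ versus $x_{A_0^k}$ and for $y_k$ versus $y_{A_0^k}$, and verifying that the factorial identity $\binom{k-1}{j-1}(k-j)!(j-1)! = (k-1)!$ correctly merges the isolated $R_{A_0^k}$ term with the sum. I expect no substantial obstacle beyond this arithmetic.
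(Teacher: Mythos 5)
Your proof is correct and follows essentially the same route as the paper: specialize Kazarian's formula to $\underline{\alpha}=A_0^k$, push forward, convert normalizations via $y_{A_0^m}=m!\,y_m$ and $f_*x_k=k!\,y_k$, substitute the Marangell--Rim\'anyi identity, and collapse the factorials. The only cosmetic difference is that the paper folds the isolated $R_{A_0^k}$ term into the sum from the start via the convention $y_0=[Y]$, whereas you carry it separately and merge it at the end; the arithmetic is identical.
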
 

\begin{proof}
Setting $y_0=[Y]$ for convenience, Kazarian's formula yields
\[
 x_{A_0^k} = \sum_{i=0}^{k-1} \binom{k-1}{i} R_{A_0^{i+1}}(\ell) \, f^*(y_{A_0^{k-1-i}}).
\]
Pushing forward by $f_*$ and using the projection formula, we get
\[
	y_{A_0^k} = \sum_{i=0}^{k-1} \binom{k-1}{i} f_*(R_{A_0^{i+1}}(\ell)) \, y_{A_0^{k-1-i}}.
\]
Now we use $y_{A_0^k} = k! \, y_k$ and Theorem \ref{residual poly} to deduce the stated formula.
\end{proof}

To use Proposition \ref{multiple point formula}, we still need to compute $R_{A_i}(\ell)$ for $i \le 6$. Berczi and Szenes give a strategy for computing $R_{A_i}(\ell)$ by a complicated iterated residue formula (\cite{berczi_thom_2012} Theorem 7.16) involving an auxiliary polynomial $\widehat{Q_i}(z_1,\dots,z_i)$. They compute $\widehat{Q_1}=\widehat{Q_2}=\widehat{Q_3} = 1$, $\widehat{Q_4}=2z_1 + z_2 - z_4$, $\widehat{Q_5}=(2z_1+z_2-z_5)(2z_1^2+3z_1 z_2 - 2z_1 z_5 + 2z_2 z_3 - z_2 z_4 - z_2 z_5 - z_3 z_4 + z_4 z_5)$ and sketch the computation of $\widehat{Q_6}$. We wrote computer code to compute $\widehat{Q_6}$ and to compute the iterated residues in the special case when $c_{\ell+i}=0$ for $i > 4$ (we will see this vanishing in \S\ref{ss:computation}). With this simplifying assumption, we were able to compute $R_{A_i}(\ell)$ up to $i=6$. For example $R_{A_0}(\ell-1)=1$, $R_{A_1}(\ell-1)=c_\ell$, and
\[
	R_{A_2}(\ell-1)=8c_{\ell-4}c_{\ell+4} + 4c_{\ell-3}c_{\ell+3} + 2c_{\ell-2}c_{\ell+2} + c_{\ell-1}c_{\ell+1} + c_{\ell}^2.
\]
The rest of $R_{A_i}(\ell-1)$, which get complicated quickly, can be found in \cite{johnson_two_2016}.

\begin{rem}\label{r:infeasible} It is unknown whether Theorem \ref{residual poly} holds for $i > 6$. If $R_{A_7}(\ell)$ could be computed, then the multiple point formula (Proposition \ref{multiple point formula}) could be computed for $k=8$ and compared to $\chi(\OO_{S^{[8]}}(L_8-r\tfrac{B}{2}))$. Agreement in these computations would be evidence that Theorem \ref{residual poly} holds for $i=7$. Unfortunately, the computation of $\widehat{Q}_i$ for $i \ge 7$ was infeasible for us since the number of variables involved is proportional to $i^2$, so a more efficient algorithm may be necessary.
\end{rem}

\subsection{Computation of multiple point classes}\label{ss:computation}

We complete the proof of Theorem A in the cases $n,r \ge 2$, $n \le 7$, and $(n,r) \notin \{\, (2,2),(2,3),(3,2) \,\}$ by computing the number of $(1,0,-n)$ quotients of $V$ using multiple point formulas and checking that the result matches the formula for $\chi\big(\OO_{S^{[n]}}(L_n - r\tfrac{B}{2})\big)$ derived from Theorem \ref{thm:chi_gen}.

Here is a summary of the ingredients that we need to compute the number of $(1,0,-n)$ quotients:
\begin{enumerate}
\item a map $f \colon X \to Y$ of codimension $\ell = r-1$ whose locus of $n$-fold points $Y_n$ is in bijection with the sections of $V^*$ vanishing at $n$ points;
\item an iterative formula for the cohomology class $y_n$ of $Y_n$ for $n \le 7$, which counts the $n$-fold points if $f$ is admissible;
\item the residual polynomials $R_{A_i}(\ell-1)$ that appear in the formula, assuming the vanishing $c_{\ell+i}=0$ for $i>4$;
\item the relative Chern classes $c_i$ that appear in the $R_{A_i}(\ell-1)$;
\item identities for computing push forwards of products of the $c_i$.
\end{enumerate}
We have not yet explicitly described the last two items, so we do that now. The Chern classes $c_i = c_i(f^*T_Y/T_X)$ of the virtual normal sheaf of $f$ are
\begin{align*}
  c_i=\binom{r+1}{i}\xi^{i} &+ \left[\binom{r}{i-1}L + \binom{r+1}{i-1}K_S\right] \xi^{i-1} \\
  &+ \left[ \binom{r-1}{i-2}c_2(V^*) + \binom{r}{i-2}LK_S + \binom{r+1}{i-2} 2(K_S^2-6\rho) \right] \xi^{i-2},
\end{align*}
where $\xi$ is the divisor class associated to $\OO_X(1)$, $\rho$ is the pullback of the point class $p$ on $S$ under $X \to S$, and $L$, $K_S$, and $c_2(V^*)$ denote the pullbacks of these classes from $S$. This formula is obtained by an elementary computation on the projective bundle $X = P(G)$ using the fact that $Y$ is a projective space. First, we note that $f^*T_Y = (1+\xi)^{n(r-1)+1}$. Second, the sequences $0 \to T_{X/S} \to T_X \to \pi^* T_S \to 0$ and the relative Euler sequence $0 \to \OO_X \to \pi^*G \otimes \OO_X(1) \to T_{X/S} \to 0$, together with $c(T_S)=1-K_S+(12\chi(\OO_S)p-K_S^2)$ yield $c(T_X) = c(\pi^* G \otimes \OO_X(1))(1 - K_S + (12\rho -K_S^2))$. Third, we compute $c(\pi^* G \otimes \OO_X(1))$ using \cite{fulton_intersection_1998} (3.2.3b). Finally, we extract the degree $i$ part of the appropriate product to get the formula for $c_i$.

The push forward identities are simple since classes are determined by their degree on the projective space $Y$. Let $H$ denote the hyperplane class on $Y$. Let $\delta$ be the divisor class on $X$ obtained by pulling back a divisor class $d$ on $S$. Then the projection formula yields
\[
	f_*(\xi^k) = ([S].c_2(V^*)) H^{r-1+k}; \quad f_*(\delta \xi^k)=(d.L)H^{r+k}; \quad f_*(\rho \xi^k) = H^{r+1+k}.
\]

We wrote computer code in Sage to compute the iterative multiple point formula for $y_n$ for $n \le 7$. Here are the results for $n \le 3$, with the substitution $L^2=2\chi(L)+L.K_S-2$ to reduce the length of the output:
\begin{align*}
	y_1 &= \chi(L); \\
	y_2 &= \tfrac{1}{2}\chi(L)^2 + \chi(L) \big(-
r^2 + \tfrac{1}{2} \big) + K_S^2\big(-\tfrac{r^4}{24}+\tfrac{r^3}{12}+\tfrac{r^2}{24}- \tfrac{r}{12}\big) + L.K_S\big(-\tfrac{r^3}{6}+\tfrac{r}{6} \big) \\
    &\quad+ \tfrac{r^4}{4} - \tfrac{r^2}{4}; \displaybreak[0]\\
	y_3 &= \tfrac{1}{6}\chi(L)^3 + \chi(L)^2 \big(-r^2+\tfrac{1}{2}\big) +\chi(L) \big(\tfrac{7r^4}{4} - \tfrac{7r^2}{4} + \tfrac{1}{3} \big) \\
    &\quad+ \chi(L)K_S^2 \big(- \tfrac{r^4}{24} + \tfrac{r^3}{12} + \tfrac{r^2}{24} - \tfrac{r}{12} \big) 
   + \chi(L)L.K_S \big( - \tfrac{r^3}{6} + \tfrac{r}{6} \big) \\
    &\quad+ K_S^2 \big(\tfrac{97r^6}{720}- \tfrac{17r^5}{80} - \tfrac{31r^4}{144} + \tfrac{5r^3}{16} +
\tfrac{29r^2}{360} - \tfrac{r}{10} \big)
	+ L.K_S \big(\tfrac{17r^5}{40} - \tfrac{5r^3}{8} + \tfrac{r}{5} \big) \\
   &\quad - \tfrac{2r^6}{3} + r^4  - \tfrac{r^2}{3}.
\end{align*}
These formulas for $y_n$ match $\chi\big(\OO_{S^{[n]}}(L_n - r\tfrac{B}{2})\big)$. Our code checks the remaining cases $4 \le n \le 7$ as well. The explicit formulas can be found in \cite{johnson_two_2016}.

\section{Genericity on $\PP^2$}\label{s:genericity}
As evidence that our computations in Theorem A are meaningful, we will now prove Theorem B, which exhibits some vector bundles $V$ on $\PP^2$ that have the right invariants as well as finitely many $(1,0,-n)$ quotients that are all ideal sheaves.

We consider short exact sequences
\[
	0 \to E \to V \to \mathcal{I}_Z \to 0
\]
of sheaves on $\PP^2$, where $Z$ is a zero-dimensional subscheme of $\PP^2$ of length $n$, $e= \op{ch}(E) = (r, -\lambda, (n-1)r - \tfrac{3}{2}\lambda)$, and thus $v=\op{ch}(V) = (r+1, -\lambda, (n-1)r-n - \tfrac{3}{2}\lambda)$. It will often be convenient to consider the dual long exact sequence
\[
	0 \to \OO \to V^* \to E^* \to \OO_Z \to 0
\]
in which the section $\OO \to V^*$ vanishes along $Z$, so the cokernel fails to be locally free along $Z$. We write $e^\vee$ and $v^\vee$ for the dual invariants. We assume $r \ge 2$, $n \ge 1$, and $\lambda$ sufficiently large relative to $r$ and $n$ so that $M(v)$ is positive dimensional of the expected dimension, as guaranteed by

\begin{thm}[\cite{le_potier_lectures_1997}] \label{t:delta_curve}
There exists a positive dimensional moduli space $M(\xi)$ if and only if $\chi(\xi)$ and $c_1(\xi)$ are integral and $\Delta(\xi) \ge \delta(\mu(\xi))$. In this case $M(\xi)$ is a normal, irreducible, factorial projective variety of dimension $1 - \chi(\xi, \xi)$.
\end{thm}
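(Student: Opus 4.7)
The plan is to follow the strategy of Drezet--Le Potier, splitting the result into two parts: the characterization of non-emptiness via the function $\delta$, and the structural statement about $M(\xi)$ being a normal, irreducible, factorial variety of the expected dimension. Throughout one uses that $\mathbb{P}^2$ admits a rich collection of exceptional bundles $F$ (those with $\Hom(F,F)=\mathbb{C}$ and $\Ext^i(F,F)=0$ for $i>0$), whose slopes form a dense set of dyadic rationals, and that $\delta(\mu)$ is built explicitly from the discriminants of the exceptional bundles adjacent to $\mu$.

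For the necessity of $\Delta(\xi)\ge \delta(\mu(\xi))$, I would argue that if $E$ is semistable with character $\xi$ and $F$ is an exceptional bundle whose slope lies in a suitable interval around $\mu(\xi)$, then either $\Hom(F,E)=0$ or $\Hom(E,F(-3))=0$; otherwise stability of $E$ would force contradictory slope inequalities against $F$. Combined with Serre duality and Riemann--Roch applied to $\chi(F,E)$, this gives a lower bound on $\Delta(E)$ in terms of $\Delta(F)$, and optimizing over exceptional bundles $F$ produces exactly the bound $\Delta(\xi)\ge \delta(\mu(\xi))$. For the sufficiency, I would exhibit semistable sheaves directly, for example via Beilinson-type resolutions by exceptional bundles, or by elementary modifications starting from Chern characters lying on the critical curve $\Delta=\delta(\mu)$ and walking into the semistable region.

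For the structural statement, at a stable point $E \in M(\xi)$ stability yields $\Hom(E,E)=\mathbb{C}$, and Serre duality gives $\Ext^2(E,E)\cong\Hom(E,E(-3))^*=0$ since $\mu(E(-3))<\mu(E)$. Hence $T_EM(\xi)=\Ext^1(E,E)$ has dimension $1-\chi(E,E)$ and $M(\xi)$ is smooth at $E$ of the claimed dimension. Irreducibility follows from realizing $M(\xi)$ as a GIT quotient of an open subset of a Quot scheme parametrizing quotients of a fixed twist of a direct sum of line bundles, which is irreducible by standard deformation arguments, together with the fact that the strictly semistable locus has codimension $\ge 2$ in the region $\Delta\ge\delta(\mu)$. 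Normality follows from smoothness of the stable locus combined with this codimension bound and Hartogs, while factoriality is a separate argument of Drezet using the structure of the Picard group of the moduli stack.

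The main obstacle is the sufficiency direction and the combinatorial description of $\delta$: one must set up the mutation theory of exceptional bundles on $\mathbb{P}^2$, verify that the exceptional slopes are precisely the dyadic rationals arranged in a binary tree, and show that the resulting piecewise function $\delta(\mu)$ is the sharp lower bound. The construction of semistable sheaves realizing characters on the boundary $\Delta=\delta(\mu)$ (which are rigid or nearly rigid) is the most delicate part and requires careful use of the Beilinson spectral sequence adapted to the relevant exceptional triple.
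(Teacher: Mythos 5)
This theorem is not proved in the paper at all --- it is cited as a black box from Le Potier's lectures (the label \ref{t:delta_curve} carries the citation \cite{le_potier_lectures_1997}, and the surrounding text invokes it purely to guarantee that $M(v)$ on $\PP^2$ is non-empty of the expected dimension). So there is no proof in the paper to compare against; what you have written is a sketch of the Drezet--Le Potier argument from the original literature, which is the right source.

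As a sketch it captures the right ingredients: necessity via exceptional bundles, sufficiency via explicit construction, the tangent-space computation with $\Ext^2(E,E)=0$ by Serre duality and stability, and the appeal to Drezet for factoriality. Two places are glossed over more than they should be even for a sketch. First, irreducibility of $M(\xi)$ on $\PP^2$ is a serious theorem in its own right (Le Potier, Ellingsrud, and others); it does \emph{not} follow merely from the Quot scheme being irreducible, since one must show the open locus of semistable quotients remains irreducible, and the argument in the literature goes through a careful deformation/specialization analysis rather than the one-line reduction you indicate. Second, the claim that ``the strictly semistable locus has codimension $\ge 2$'' is a delicate hypothesis that fails in edge cases and requires its own verification as a function of $\xi$; normality and factoriality of the singular GIT quotient are Drezet's theorems and do not simply fall out of Hartogs plus a codimension bound. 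Neither gap affects the paper, since the paper only uses the statement, not its proof.
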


\noindent Here the discriminant $\Delta(\xi)$ is defined by the formula
\[
\Delta(\xi)=\tfrac12 \mu(\xi)^2 - \op{ch}_2(\xi)/r(\xi),
\]
where $r$ is the rank and $\mu(\xi) = \text{c}_1(\xi)/r(\xi)$ is the slope, while the function $\delta$ has a complicated fractal-like structure but is bounded above by 1, so $\Delta(v) \ge 1$ is sufficient.

As we will prove later in Proposition \ref{p:gen_res}, if $\xi$ is a Chern character on $\PP^2$ satisfying certain inequalities, then general stable sheaves $G$ in $M(\xi)$ have resolutions of the form
\[
	0 \to \OO(-2)^{\gamma} \to \OO(-1)^\beta \oplus \OO^\alpha \to G \to 0 \tag{$\dagger$},
\]
where $\alpha,\beta,\gamma \ge 0$ are uniquely determined by $\xi$. These resolutions will play a critical role in our study of general vector bundles, and we will refer to them as ($\dagger$)-resolutions. In particular, applying Proposition $\ref{p:gen_res}$ to the invariants $e^{\vee}$ and $v^{\vee}$ yields
\begin{prop}\label{p:EV_res} Assume $(n-1)r < \tfrac{3-\sqrt{5}}{2} \lambda$. Then a general sheaf $E^*$ in $M(e^\vee)$ has a ($\dagger$)-resolution
\[
	0 \to \OO(-2)^{c} \to \OO(-1)^{b} \oplus \OO^{a} \to E^* \to 0
\]
and a general sheaf $V^*$ in $M(v^{\vee})$ has a ($\dagger$)-resolution
\[
	0 \to \OO(-2)^{c+n} \to \OO(-1)^{b+2n} \oplus \OO^{a+1-n} \to V^* \to 0,
\]
where
\[
	a = nr, \quad b = \lambda - 2(n-1)r, \quad c = \lambda - (n-1)r.
\]
Conversely, cokernels of general maps $\OO(-2)^c \to \OO(-1)^b \oplus \OO^a$ and $\OO(-2)^{c+n} \to \OO(-1)^{b+2n} \oplus \OO^{a+1-n}$ are Gieseker-semistable.
\end{prop}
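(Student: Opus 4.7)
The plan is to deduce both statements from Proposition \ref{p:gen_res} (proved later in this section) applied to $\xi = e^\vee$ and to $\xi = v^\vee$; with that result in hand, what remains is essentially numerical bookkeeping to identify the exponents.

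First, I would record the Chern character of the cokernel of a generic map in a $(\dagger)$-resolution. Expanding $\alpha + \beta\, e^{-H} - \gamma \, e^{-2H}$ gives
\[
\op{ch}(G) \;=\; \bigl(\alpha + \beta - \gamma,\; (2\gamma - \beta)H,\; (\tfrac{\beta}{2} - 2\gamma)H^{2}\bigr).
\]
Matching this against $e^\vee = (r,\, \lambda,\, (n-1)r - \tfrac{3}{2}\lambda)$ is a $3\times 3$ linear system with unique solution $(\alpha, \beta, \gamma) = (nr,\, \lambda - 2(n-1)r,\, \lambda - (n-1)r)$, which is the claimed triple $(a, b, c)$. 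Since $v^\vee - e^\vee = (1, 0, -n)$, the analogous solve for $v^\vee$ shifts these exponents by $(1 - n,\, 2n,\, n)$, producing the stated triple $(a + 1 - n, \, b + 2n, \, c + n)$ for $V^*$ without further computation.

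Next, I would invoke Proposition \ref{p:gen_res} for each of $\xi = e^\vee$ and $\xi = v^\vee$ to obtain simultaneously the existence of a $(\dagger)$-resolution for a general stable sheaf and the Gieseker-semistability of general cokernels (the latter being the converse half of Proposition \ref{p:gen_res}, which thereby requires no additional argument here). This requires verifying the hypotheses of Proposition \ref{p:gen_res}: non-negativity of the exponents and a Drezet--Le Potier-style slope bound placing $\xi$ above the $\delta$-curve. Non-negativity for $e^\vee$ forces $\lambda \geq 2(n-1)r$, which is strictly weaker than $(n-1)r < \tfrac{3-\sqrt{5}}{2}\lambda$; non-negativity for $v^\vee$ is strictly weaker still. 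The slope/discriminant inequality is the tighter constraint, and I expect $v^\vee$ (whose slope $\mu(v^\vee) = \lambda/(r+1)$ is smaller than $\mu(e^\vee) = \lambda/r$) to be the binding case. The constant $\tfrac{3-\sqrt{5}}{2}$ should emerge as the critical root of the quadratic in $\lambda/((n-1)r)$ obtained from the stability condition applied to the exponents of the $V^*$-resolution.

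The main obstacle is this last verification: checking that the single hypothesis $(n-1)r < \tfrac{3-\sqrt{5}}{2}\lambda$ is uniformly sufficient to invoke Proposition \ref{p:gen_res} on both Chern characters at once. Once the exact inequality packaged in the hypothesis of Proposition \ref{p:gen_res} is unwound, this should reduce to verifying a single quadratic inequality whose critical value is $\tfrac{3-\sqrt{5}}{2}$, with the $v^\vee$ case dominating.
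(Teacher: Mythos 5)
Your approach is exactly the paper's: Proposition \ref{p:EV_res} is stated there purely as an application of Proposition \ref{p:gen_res} to $\xi = e^\vee$ and $\xi = v^\vee$, and your exponent bookkeeping (solving the $3\times 3$ linear system and shifting by $(1-n,2n,n)$) is correct. One detail of your prediction is backwards, though the check, once actually carried out, is immediate and linear rather than quadratic: the hypothesis of Proposition \ref{p:gen_res} is the constraint $d < -\tfrac{\sqrt{5}}{2}\lambda$ on $d = \ch_2(\xi)$. Both $e^\vee$ and $v^\vee$ have $c_1 = \lambda$ but $\ch_2(v^\vee) = \ch_2(e^\vee) - n$, so $v^\vee$ satisfies the constraint \emph{more} comfortably, not less; it is $e^\vee$ that is binding. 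Plugging $d(e^\vee) = (n-1)r - \tfrac{3}{2}\lambda$ into $d < -\tfrac{\sqrt{5}}{2}\lambda$ recovers exactly $(n-1)r < \tfrac{3-\sqrt{5}}{2}\lambda$, while for $v^\vee$ the requirement relaxes to $(n-1)r - n < \tfrac{3-\sqrt{5}}{2}\lambda$. The requirement $\chi(\xi)\ge 0$ is automatic for both ($\chi(e^\vee)=nr$, $\chi(v^\vee)=n(r-1)+1$), and the quadratic you anticipated has already been unwound inside the proof of Proposition \ref{p:gen_res} itself, where $\tfrac{3-\sqrt{5}}{2}$ appears as $x_0$ in the condition $|\mu_0| < x_0$ forcing the corresponding exceptional slope to be $0$.
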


\begin{rem} Throughout this section, $a,b,c$ will be as in the proposition, and $A = a+1-n$, $B = b+2n$, $C = c+n$ will be used to simplify notation. Since general $E^*$ and $V^*$ in moduli are locally free, we can dualize the ($\dagger$)-resolutions in the proposition to get resolutions $0 \to E \to \OO^a \oplus \OO(1)^b \to \OO(2)^c \to 0$ and $0 \to V \to \OO^A \oplus \OO(1)^B \to \OO(2)^C \to 0$ for general $E \in M(e)$ and $V \in M(v)$.
\end{rem}

The description of general $V^*$ in Proposition \ref{p:EV_res} is consistent with the way we constructed $V^*$ in \S3 and \S4:

\begin{cor}\label{c:same_V*} General sheaves in $M(v^\vee)$ coincide with general extensions
\[
	0 \to \OO^r \to V^* \to \OO(\lambda) \otimes \mathcal{I}_W \to 0,
\]
where $W \subset \PP^2$ is a general zero-dimensional subscheme of length $\binom{\lambda+2}{2} - (n-1)(r-1)$. If $(n-1)(r-1) \ge 3$, then general sheaves in $M(v^\vee)$ are globally generated.\footnote{Proposition \ref{p:gg} is a more general result about global generation of stable sheaves on $\PP^2$.}
\end{cor}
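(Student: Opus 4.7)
The plan is to match general extensions of the stated form with general $(\dagger)$-resolutions via the Horseshoe Lemma, then invoke Proposition \ref{p:EV_res} for Gieseker-semistability and density in $M(v^\vee)$.

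First, I verify numerical compatibility. Additivity of the Chern character along the extension gives $\ch(V^*)=(r+1,\lambda,\tfrac{\lambda^2}{2}-|W|)$, and equating with $v^\vee$ forces $|W|=\binom{\lambda+2}{2}-(n-1)(r-1)$, matching the prescribed length. Next, the horseshoe assembly: for $W$ general and $\lambda$ large, $\OO(\lambda)\otimes\mathcal{I}_W$ is a general stable sheaf of its rank-1 Chern character, so Proposition \ref{p:gen_res} supplies a $(\dagger)$-resolution
\[
0 \to \OO(-2)^{C} \to \OO(-1)^{B} \oplus \OO^{A-r} \to \OO(\lambda)\otimes\mathcal{I}_W \to 0,
\]
where an arithmetic check identifies the exponents as $C=\lambda-(n-1)r+n$, $B=\lambda-2(n-1)r+2n$, $A-r=(n-1)(r-1)$. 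Applying the Horseshoe Lemma to the extension, using the trivial resolution of $\OO^r$, then yields
\[
0 \to \OO(-2)^{C} \to \OO(-1)^{B} \oplus \OO^{A} \to V^* \to 0,
\]
which is exactly the $(\dagger)$-resolution of $v^\vee$ predicted by Proposition \ref{p:EV_res}. The converse clause of that proposition supplies Gieseker-semistability, so $V^* \in M(v^\vee)$.

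For density in $M(v^\vee)$, I compare dimensions. The exact sequence $0 \to \mathcal{I}_W(\lambda-3) \to \OO(\lambda-3) \to \OO_W \to 0$ together with Serre duality computes $\dim \Ext^1(\OO(\lambda)\otimes\mathcal{I}_W,\OO)=3\lambda-(n-1)(r-1)$ for $\lambda$ large. The parameter space of pairs $(W,\text{extension class})$, which consists of the Hilbert scheme $(\PP^2)^{[|W|]}$ fibered over a Grassmannian of $r$-dimensional subspaces of this $\Ext^1$, has dimension exceeding $\dim M(v^\vee)$ by $r(n-1)(r-1) = \dim \op{Gr}(r,H^0(V^*))$, and this Grassmannian parameterizes the distinct choices of $\OO^r \hookrightarrow V^*$ realizing a fixed $V^*$ as such an extension. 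Since the generic fiber of the forgetful map has the expected dimension, the map to the irreducible moduli space $M(v^\vee)$ is dominant, and its image is dense. For the global generation clause, Proposition \ref{p:gg} applied to $\xi=v^\vee$ has hypothesis $\chi(\xi)\ge r(\xi)+2$, which translates to $n(r-1)+1 \ge r+3$, equivalent to $(n-1)(r-1)\ge 3$.

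The hard part is the horseshoe step: confirming that Proposition \ref{p:gen_res} applies to the rank-1 Chern character $\ch(\OO(\lambda)\otimes\mathcal{I}_W) = (1,\lambda,\tfrac{\lambda^2}{2}-|W|)$ and outputs a $(\dagger)$-resolution with precisely the numerical shape above, and that the horseshoe assembly produces a \emph{general} map $\OO(-2)^{C}\to \OO(-1)^{B} \oplus \OO^{A}$, so that the converse clause of Proposition \ref{p:EV_res} is genuinely applicable to the $V^*$ arising from the extension.
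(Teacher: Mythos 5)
Your proposal runs the construction in the opposite direction from the paper's, and the step you flag as ``the hard part'' is a genuine gap that your argument does not close, although it can be closed. The paper starts from a general $V^*\in M(v^\vee)$ with its general $(\dagger)$-resolution $\OO(-2)^C\to\OO(-1)^B\oplus\OO^A\to V^*\to 0$, lifts a general $\OO^r\hookrightarrow V^*$ into the $\OO^A$ summand (projectivity), and observes that the cokernel of $\OO^r\to V^*$ is then the cokernel of a \emph{general} map $\OO(-2)^C\to\OO(-1)^B\oplus\OO^{A-r}$, hence torsion-free and of the form $\OO(\lambda)\otimes\mathcal{I}_W$. Generality of the resolution map passes directly to generality of the quotient, so no further argument is needed before the dimension count. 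You instead assemble a resolution of $V^*$ from a general extension via the Horseshoe Lemma and then want to invoke the converse clause of Proposition~\ref{p:EV_res}; but that clause applies to the cokernel of a \emph{general} map, whereas the Horseshoe-assembled map has a block structure $(M_1,M_2,N)$ in which only $(M_1,M_2)$ is a priori general, with $N:\OO(-2)^C\to\OO^r$ constrained by the extension class and by the choice of lift. Your subsequent dimension count then implicitly presupposes what was to be shown, since without semistability of the generic $V^*$ there is no map to $M(v^\vee)$ whose fibers you can measure.

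The gap is repairable: applying $\Hom(-,\OO^r)$ to $0\to\OO(-2)^C\to\OO(-1)^B\oplus\OO^{A-r}\to\OO(\lambda)\otimes\mathcal{I}_W\to 0$ and using $\Ext^1(\OO(-1)^B\oplus\OO^{A-r},\OO^r)=0$ on $\PP^2$ shows that $N$ sweeps out all of $\Hom(\OO(-2)^C,\OO^r)$ modulo the lift-ambiguity as the extension class ranges over $\Ext^1(\OO(\lambda)\otimes\mathcal{I}_W,\OO^r)$, so the assembled resolution map is in fact general and the converse clause of Proposition~\ref{p:EV_res} does apply. Once that is in place your numerical checks and the dimension count correctly finish the argument. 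But as written, your proposal stops at the announcement that this is the hard part; the paper's direction avoids the issue entirely and is the shorter route. Your appeal to Proposition~\ref{p:gg} for global generation is also more machinery than needed: the paper simply notes that $\OO(\lambda)\otimes\mathcal{I}_W$ is globally generated once it has at least three sections, and global generation of the extension quotient lifts to $V^*$.
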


\begin{proof} If $V^*$ has a general ($\dagger$)-resolution, then the cokernel of a general map $\OO^r \to V^*$ is also the cokernel of a general map
\[	
	\OO(-2)^C \to \OO(-1)^B \oplus \OO^{A-r},
\]
so it is torsion-free and thus of the form $\OO(\lambda) \otimes \mathcal{I}_W$. The fact that both $W$ and the extension are general follows from a dimension count showing that the dimension of extensions agrees with $\dim M(v^\vee)$. Since $\OO(\lambda) \otimes \mathcal{I}_W$ is globally generated when it has $\ge 3$ sections, so is $V^*$.
\end{proof}

It will be convenient to have a criterion for detecting when a given coherent sheaf on $\PP^2$ has a ($\dagger$)-resolution. The following proposition applies to coherent sheaves with arbitrary Chern classes that may not be stable or locally-free.

\begin{prop}\label{p:res_crit} A coherent sheaf $G$ has a ($\dagger$)-resolution if and only if $h^0(G(-1))=h^1(G)=h^2(G(-1))=0$, and $\op{Hom}(\OO(-1),\OO) \otimes H^0(G) \to H^0(G(1))$ is injective.
\end{prop}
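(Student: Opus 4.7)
Plan:

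The forward direction is a routine cohomology computation. Given a ($\dagger$)-resolution $0 \to \OO(-2)^\gamma \to \OO(-1)^\beta \oplus \OO^\alpha \to G \to 0$, the long exact sequences in cohomology obtained by twisting by $\OO(-1)$, $\OO$, and $\OO(1)$, together with the known cohomology of line bundles on $\PP^2$, yield all four stated conditions directly. In particular, twisting by $\OO(1)$ gives $H^0(G(1)) \cong H^0(\OO^\beta) \oplus H^0(\OO(1)^\alpha)$, and naturality of the morphism of complexes $\OO^\alpha \to G$ identifies the second summand with the image of the multiplication map $H^0(\OO(1)) \otimes H^0(G) \to H^0(G(1))$, forcing injectivity.

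For the backward direction, set $\alpha = h^0(G)$, $\beta = h^0(G(1)) - 3\alpha$, and $\gamma = h^1(G(-1))$. The cleanest tool is the Beilinson spectral sequence on $\PP^2$, whose $E_1$ page is $E_1^{p,q} = H^q(G \otimes \Omega^{-p}(-p)) \otimes \OO(p)$ for $-2 \le p \le 0$, converging to $G$ in total degree zero. Using $\Omega^2(2) \cong \OO(-1)$, the three hypothesized vanishings kill $E_1^{-2,0}$, $E_1^{-2,2}$, and $E_1^{0,1}$. The injectivity hypothesis, applied through the Euler sequence $0 \to \Omega^1(1) \to \OO^3 \to \OO(1) \to 0$ tensored with $G$, forces $H^0(G \otimes \Omega^1(1)) = 0$ and $h^1(G \otimes \Omega^1(1)) = \beta$, so $E_1^{-1, 0} = 0$ and $E_1^{-1, 1} = \OO(-1)^\beta$. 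Convergence to a sheaf in degree zero then forces the remaining off-diagonal entries $E_1^{0,2}$ and $E_1^{-1,2}$ to cancel against one another via $d_1$, and the surviving data, together with the $d_2$ differential $E_2^{-2, 1} = \OO(-2)^\gamma \to E_2^{0,0} = \OO^\alpha$, assemble into the desired ($\dagger$)-resolution.

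The main obstacle is verifying those cancellations and identifying the assembled two-term complex canonically with a resolution of $G$ rather than merely a quasi-isomorphic complex. A parallel, more hands-on route that avoids the spectral-sequence bookkeeping is: choose a complement $U \subset H^0(G(1))$ of dimension $\beta$ to the multiplication image, and combine the evaluation map $H^0(G) \otimes \OO \to G$ with the $\beta$ sections of $G(1)$ coming from $U$ to define a morphism $\phi \colon \OO(-1)^\beta \oplus \OO^\alpha \to G$. One then shows by tracking $H^i(\phi(k))$ for $k \in \{-1, 0, 1\}$ (using the vanishings to force surjectivity of $\phi$ and prescribed vanishings on $\ker\phi$) that $\phi$ is surjective with kernel a rank-$\gamma$ reflexive sheaf having the cohomology of $\OO(-2)^\gamma$, hence isomorphic to it. The verification at the twist $\OO(-1)$ is exactly where $\gamma = h^1(G(-1))$ enters.
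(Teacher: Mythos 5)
Your Beilinson spectral sequence route is a valid alternative to the paper's, which instead observes that $h^1(G)=h^2(G(-1))=0$ makes $G$ $1$-regular (so $G(1)$ is globally generated, giving a surjection $\OO(-1)^\beta \oplus \OO^\alpha \twoheadrightarrow G$ after peeling off the $3\alpha$ sections factoring through $\OO(1)$), computes that all twists of the kernel $K$ have $h^1=0$, and invokes Horrocks' splitting criterion to force $K \simeq \OO(-2)^\gamma$. Both approaches work; yours trades Horrocks for the Beilinson machinery. A few points worth tightening in your sketch: (i) the vanishings $E_1^{-1,2}=E_1^{0,2}=0$ actually hold \emph{outright}, not just up to cancellation — $1$-regularity gives $h^2(G)=0$ and $h^1(G(1))=0$, and then the Euler sequence tensored with $G$ gives $h^2(G\otimes\Omega^1(1))=0$. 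Once you know this, the Beilinson monad $C^\bullet$ is literally the two-term complex $\OO(-2)^{\gamma} \to \OO(-1)^{\beta}\oplus\OO^{\alpha}$ concentrated in degrees $-1,0$, and convergence to $G$ in degree $0$ immediately says it is a $(\dagger)$-resolution; no cancellation argument or $d_2$-gymnastics is needed. (ii) The intermediate claim $E_2^{-2,1}=\OO(-2)^\gamma$ is not quite right — $E_2^{-2,1}=\ker(d_1\colon E_1^{-2,1}\to E_1^{-1,1})$, which need not equal $\OO(-2)^\gamma$; the $d_2$-differential alone does not encode the full map $\OO(-2)^\gamma \to \OO(-1)^\beta\oplus\OO^\alpha$. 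The monad viewpoint sidesteps this imprecision. (iii) Your hands-on alternative is essentially the paper's argument, but the final step "hence isomorphic to it" needs Horrocks (or an equivalent splitting statement): knowing the kernel has no intermediate cohomology in all twists is what identifies it as a sum of line bundles, and then the exclusion of $\OO$, $\OO(-1)$, and $\OO(-n)$ for $n\ge 3$ pins it down to $\OO(-2)^\gamma$.
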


\begin{proof} The ($\implies$) direction follows from the fact that line bundles on $\PP^2$ have no first cohomology and the observation that $\op{Hom}(\OO(-1),\OO) \otimes H^0(\OO^{\alpha}) \to H^0(\OO(1)^{\alpha})$ is an isomorphism for all $\alpha$.

For ($\impliedby$), set $\alpha = h^0(G)$ and $\beta = h^0(G(1))-3\alpha$. The vanishing $h^1(G)=h^2(G(-1))=0$ implies that $G$ has Castelnuovo-Mumford regularity $\le 1$, so $G(1)$ is globally generated. Starting with the surjection $\OO^{3\alpha+\beta} \twoheadrightarrow G(1)$, observe that $3\alpha$ of these sections factor through $\OO(1)$, and conclude that there is a (non-canonical) surjection $\OO^{\beta} \oplus \OO(1)^{\alpha} \twoheadrightarrow G(1)$, which we twist to get a short exact sequence
\[
	0 \to K \to \OO(-1)^{\beta} \oplus \OO^{\alpha} \xrightarrow{f} G \to 0.
\]
Since line bundles have no first cohomology and $f$ induces an isomorphism on global sections, $h^1(K)=0$ and hence $h^1(K(n)) = 0$ for all $n \ge 0$. By assumption $h^0(G(-1))=0$, so $h^0(G(n))=0$ for all $n < 0$, which implies $h^1(K(n))=0$ for all $n < 0$. Since all twists of $K$ have vanishing first cohomology, $K$ must be a direct sum of line bundles by the splitting criterion of Horrocks (\cite{okonek_vector_1980}). $K$ cannot contain any $\OO$ or $\OO(-1)$-summands by construction, nor can it contain $\OO(-n)$-summands for $n \ge 3$ since $h^1(G)=0$, so $K \simeq \OO(-2)^{\gamma}$ for some $\gamma$.
\end{proof}


\subsection{Set-up}

Instead of working with the moduli space $M(v)$, it will be more convenient to work with the resolution space. We define $R(v)$ to be the open subset of the projective space $\PP^N = P\big(\op{Hom}(\OO^A \oplus \OO(1)^B,\OO(2)^C)\big)$ consisting of surjective morphisms (whose kernels thus have the right invariants). $R(v)$ has dimension $N = 6AC + 3BC - 1$ and the subset of resolutions of stable sheaves is open and dense by Proposition \ref{p:EV_res}. A useful feature of $R(v)$ is that it has a universal family $\mathcal{V}$ over $R(v) \times \PP^2$ defined as the kernel of a morphism of vector bundles
\[
	q^* \OO_{R(v)}(-1) \otimes p^*( \OO_{\PP^2}^A \oplus \OO_{\PP^2}(1)^B) \to p^* \OO_{\PP^2}(2)^C,
\]
where $R(v) \xleftarrow{q} R(v) \times \PP^2 \xrightarrow{p} \PP^2$ are the projections. The morphism is defined by the general matrix of linear and quadratic forms in the coordinates of $\PP^2$, where the projective coordinates of $R(v)$ parametrize the coefficients of the linear and quadratic forms.

Since we are interested in $\mathcal{I}_Z$ quotients of $V$, we consider commutative diagrams
\[\xymatrix{
	\OO^A \oplus \OO(1)^B \ar@{->>}[d]_{\pi} \ar[r]^-f & \OO(2)^C \ar@{->>}[d]^g \\ \OO \ar@{->>}[r]_{\pi_Z} & \OO_Z
} \tag{$\star$} \]
in which $f$ need not be surjective, $g$ is necessarily surjective, $Z$ varies in $(\PP^2)^{[n]}$, $\pi$ is induced by a rank 1 quotient $\OO^A \twoheadrightarrow \OO$, and $\pi_Z$ is the canonical map.

In the case when $f$ is surjective, we claim that these commutative diagrams are in bijection with the maps $V \to \mathcal{I}_Z$. The induced map on kernels of the rows in the diagram is of the form $V \to \mathcal{I}_Z$. Conversely, the map $V \to \OO^A$ can be identified with $V \to \op{Hom}(V,\OO)^* \otimes \OO$, so given any map $V \to \mathcal{I}_Z$ (possibly non-surjective), the composition $V \to \mathcal{I}_Z \to \OO$ factors through $V \to \OO^A$ and yields a diagram ($\star$).

To globalize the above diagrams, consider the vector bundle $\mathcal{E}=q_*(\OO_{\mathcal{Z}} \otimes p^*\OO(-2))$ on $(\PP^2)^{[n]}$, where $\mathcal{Z} \subset (\PP^2)^{[n]} \times \PP^2$ is the universal subscheme and we abuse notation by again writing $q,p$ for the projections. The fiber of $\mathcal{E}$ at $Z$ is $H^0(\OO_Z \otimes p^*\OO(-2)) \cong \op{Hom}(\OO(2),\OO_Z)$ by Cohomology and Base Change (\cite{hartshorne_algebraic_1977}). Consider the incidence variety
\[
	I_{r,\lambda,n} = \big\{\, (f,g,\pi) \mid \text{$g \circ f = \pi_Z \circ \pi$ in $P\big(\op{Hom}(\OO^A \oplus \OO(1)^B,\OO_Z)\big)$} \,\big\}
\]
contained in $\PP^N \times P(\mathcal{E}^C) \times \PP^{A-1}$, which we will usually view as a family over $\PP^N$. We will prove
\begin{prop}\label{p:incidence} Let $\lambda \gg 0$. Then
\begin{enumerate}[(a)]
\item $I_{r,\lambda,n}$ has a unique component of dimension $N$ and any other components have strictly smaller dimension;
\item For $\lambda \gg 0$, the Chern characters $e$ and $(1,0,-n)$ are candidates for strange duality;
\item There is an $f$ such that $V = \ker f$ has an $\mathcal{I}_Z$ quotient that is an isolated point in $\op{Quot}(V,(1,0,-n))$;
\item For an open set $U \subset R(v)$, the restriction $I_{r,\lambda,n}|_U$ coincides with the relative quot scheme $\op{Quot}(\mathcal{V}|_U,(1,0,-n))$, whose fibers over $U$ are finite, reduced, and consist of quotients $V \twoheadrightarrow \mathcal{I}_Z$ for which $Z$ consists of $n$ general distinct points.
\end{enumerate}
\end{prop}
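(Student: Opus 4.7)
The plan is to study the natural projection $\pi_R \colon I = I_{r,\lambda,n} \to R(v)$, whose set-theoretic fiber over $[f]$ is (after accounting for the scaling conventions in the definition of $I$) the quot scheme $\Quot(V,(1,0,-n))$ for $V = \ker f$. All four parts will be deduced from this projection once one explicit point in $I$ has been controlled by a deformation-theoretic computation.

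Part (b) is the most standard step. For $\lambda \gg 0$, $M(e^\vee)$ is nonempty by Theorem \ref{t:delta_curve}, a general $\hat E \in M(e^\vee)$ is locally free (so the $\TOR$ conditions on $\hat E \otimes \mathcal I_Z$ are automatic), and $\hat E$ becomes $1$-regular so that $h^2(\hat E \otimes \mathcal I_Z) = 0$ for every $Z \in (\PP^2)^{[n]}$. The vanishing $h^0(\hat E \otimes \mathcal I_Z) = 0$ for some specific pair is then easy to arrange by choosing $Z$ generally.

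The technical core is part (c). I would invoke Corollary \ref{c:same_V*} to write a general $V^*$ as an extension $0 \to \OO^r \to V^* \to \OO(\lambda) \otimes \mathcal I_W \to 0$, with $W$ a general reduced set of $\chi(\OO(\lambda)) - (n-1)(r-1)$ points, and split $W = Z \sqcup W'$ with $|Z| = n$. The composition $V^* \twoheadrightarrow \OO(\lambda) \otimes \mathcal I_W \hookrightarrow \OO(\lambda) \otimes \mathcal I_Z$ dualizes to a short exact sequence $0 \to E \to V \to \mathcal I_Z \to 0$, and the problem reduces to verifying
\[ \hom(E,\mathcal I_Z) = \ext^1(E,\mathcal I_Z) = \ext^2(E,\mathcal I_Z) = 0. \]
As recalled in the introduction, these vanishings make $[V \twoheadrightarrow \mathcal I_Z]$ an isolated, reduced point of $\Quot(V,(1,0,-n))$ --- stronger than (c) claims but exactly what parts (a) and (d) will require. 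For the computation I would combine the $(\dagger)$-resolutions of $V$ and $E$ from Proposition \ref{p:EV_res} with the exact sequence $0 \to \mathcal I_Z \to \OO \to \OO_Z \to 0$, reducing everything to cohomology of the line bundles $\OO$, $\OO(-1)$, $\OO(-2)$ restricted to $Z$ together with rank statements for the multiplication maps on $H^0(\OO(\lambda) \otimes \mathcal I_W)$; the genericity of $W$ and the hypothesis $\lambda \gg 0$ are precisely what feed into these. This Ext computation is the main obstacle.

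Parts (a) and (d) then follow formally. The vanishings from (c) imply that $\pi_R$ is étale at the constructed point, so the component $C_0$ of $I$ through it dominates $R(v)$ with the expected dimension $N$; monodromy of the étale locus over the irreducible base $R(v)$ shows $C_0$ is the only component of $I$ that dominates $R(v)$. For any other component $C$ one has $\pi_R(C) \subsetneq R(v)$ (otherwise $C$ would meet a generic zero-dimensional fiber and hence lie in $C_0$), which forces $\dim C < N$ and proves (a). For (d), let $U \subset R(v)$ be the open locus --- nonempty by (c) and upper semicontinuity of fiber dimension --- over which $\pi_R$ is finite and étale; then $I|_U = \Quot(\mathcal V|_U,(1,0,-n))$ as families by construction, and to finish I rule out the alternative quotient types in Lemma \ref{l:quotients}: case (4) is excluded by Proposition \ref{gen extensions}(f) (no section of $V^*$ vanishes on a curve), while cases (2) and (3) involve subschemes of length $> n$ and yield quotients with positive-dimensional deformation spaces, which live on strict subloci of $R(v)$ that can be removed from $U$.
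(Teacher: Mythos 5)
Your plan is close in outline to the paper's but departs at two crucial points, and both departures introduce real gaps.

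The construction in part (c) does not work as written. The composition $V^* \twoheadrightarrow \OO(\lambda)\otimes\mathcal I_W \hookrightarrow \OO(\lambda)\otimes\mathcal I_Z$ is not surjective (its cokernel is $\OO_{W'}$), and dualizing it does not produce a short exact sequence $0 \to E \to V \to \mathcal I_Z \to 0$: the dual of the extension $0 \to \OO^r \to V^* \to \OO(\lambda)\otimes\mathcal I_W \to 0$ already has an $\EXT^1$-term coming from the $0$-dimensional scheme $W$, and the quotients of $V$ parametrized by the quot scheme are governed by sections of $V^*$ vanishing at $n$ points that are generically \emph{not} a subset of $W$ (this is precisely what the multiple-point analysis of \S4 exploits). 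The paper flags in \S1.2 that factoring $V \twoheadrightarrow \mathcal I_W \to \mathcal I_Z$ only gives ``quotients'' after tilting, not in $\op{Coh}(S)$. The paper instead goes the other way: start from a general $E\in M(e)$ and a general surjection $E^* \to \OO_Z$ (surjective on global sections), set $J = \ker$, take a general extension $0 \to \OO \to V^* \to J \to 0$, verify that $V^*$ is locally free and has a $(\dagger)$-resolution (Proposition \ref{p:res_crit}), and only then dualize to get a genuine surjection $V \twoheadrightarrow \mathcal I_Z$ with kernel $E$. The isolatedness then reduces to $\hom(E,\mathcal I_Z)=0$, which is Lemma \ref{l:theta} (and is also the crux of (b) --- you dismiss it as ``easy to arrange by choosing $Z$ generally,'' but that vanishing is nontrivial enough that the paper proves it by induction on $\lambda$ via elementary modifications along general lines).

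Your deduction of (a) from (c) is also insufficient. Knowing that $\pi_R$ is \'etale at a single point $p_0 \in C_0$ gives $\dim C_0 = N$ and that $C_0$ dominates $R(v)$, but it does not show that the full fiber of $I_{r,\lambda,n}$ over $\pi_R(p_0)$, or over a generic point of $R(v)$, is zero-dimensional: other components could still have positive-dimensional fiber over the same points. Hence the parenthetical ``$C$ would meet a generic zero-dimensional fiber'' is unjustified, and ``$\pi_R(C) \subsetneq R(v)$ forces $\dim C < N$'' is false without a fiber-dimension bound. The paper obtains (a) by an explicit stratified dimension count: it stratifies $P\big(\Hom(\OO(2)^C,\OO_Z)\big)$ by the minimal rank $k$ through which $g$ factors, computes the codimension $(n-k)(C-k)$ of each stratum, and shows (using $\lambda \gg 0$) that the $k=n$ stratum contributes exactly dimension $N$ while every $k<n$ stratum contributes strictly less; uniqueness of the big component then comes from the fact that fibers of $I$ over $P(\mathcal E^C)$ are projective spaces, hence irreducible, so two $N$-dimensional components would have to share a dense open in a generic fiber. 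Without this count, part (d) is also stuck: you need (a) to show the complement of the open set $\mathcal U$ (where the relative tangent space vanishes) has image of dimension $<N$ in $R(v)$, which is what allows you to shrink to an open $U$ over which $I|_U = \Quot(\mathcal V|_U,(1,0,-n))$ is finite and \'etale. Your proposed $U$ as the locus where ``$\pi_R$ is finite and \'etale'' presupposes exactly the conclusion that the dimension count supplies.

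The parts you do correctly anticipate: that (c) is the technical heart, that the relevant vanishing is of $\hom(E,\mathcal I_Z)$, that (d) is finished by ruling out the alternative quotient types in Lemma \ref{l:quotients} (no curve torsion because $V$ has a resolution, and the type (2)/(3) cases excluded by genericity), and the broad strategy of deforming from a good point. But you need the paper's direct dimension count for (a) and the paper's construction for (c); neither can be replaced by the shortcuts you propose.
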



\subsection{Dimension count}
To prove (a) in Proposition \ref{p:incidence}, we stratify the fiber $P(\op{Hom}(\OO(2)^C,\OO_Z))$ of $P(\mathcal{E}^C)$ over $Z \in (\PP^2)^{[n]}$ as a union of varieties over which we can control the dimension of $I_{r,\lambda,n}$. Let
\[
	W_k \subset \op{Hom}(\OO(2)^C,\OO_Z)
\]
be the locally closed subset of maps $g$ that factor through a map $\OO(2)^C \to \OO(2)^k$ but not through a map $\OO(2)^C \to \OO(2)^{k-1}$. Since $\op{hom}(\OO(2),\OO_Z)=n$, we can write
\[
	P(\op{Hom}(\OO(2)^C,\OO_Z)) = P(W_1) \sqcup \cdots \sqcup P(W_n).
\]
The codimension of $W_k$ in $\op{Hom}(\OO(2)^C,\OO_Z)$ is $(n-k)(C-k)$, which is computed by adding the dimension of the Grassmannian $\op{Gr}(C,k)$ and $\op{hom}(\OO(2)^k,\OO_Z)$.

We compute the dimension of $I_{r,\lambda,n}$ over these strata by describing the fibers. For each fixed pair $(g,\pi) \in P(W_k) \times \PP^{A-1}$ over $Z$, there is an exact sequence
\[
	\op{Hom}(\OO^A \oplus \OO(1)^B,\OO(2)^C) \xrightarrow{g_*} \op{Hom}(\OO^A \oplus \OO(1)^B,\OO_Z) \to \op{Ext}^1(\OO^A \oplus \OO(1)^B,\ker g) \to 0,
\]
and the fiber in $I_{r,\lambda,n}$ over $(g,\pi)$ is the projectivization of the preimage under $g_*$ of $\pi_Z \circ \pi$. We observe that $\op{ext}^1(\OO,\ker g) = h^1(\ker g)$ measures the failure of $H^0(\OO(2)^C) \to H^0(\OO_Z)$ to be surjective and $\op{ext}^1(\OO(1),\ker g) = h^1(\ker g(-1))$ measures the failure of the map $H^0(\OO(1)^C) \to H^0(\OO_Z)$ (induced by $g(-1)$) to be surjective.

In the case $k=n$, both of these maps of global sections are surjective. Since $W_n$ has codimension 0, the dimension of $P(W_n) \times \PP^{A-1}$ is $nC + (A-1)$. The codimension of the fibers of $g_*$ in $\op{Hom}(\OO^A \oplus \OO(1)^B,\OO(2)^C)$ is $\hom(\OO^A \oplus \OO(1)^B,\OO_Z) = n(A+B)$, which is equal to $nC + (A-1) + 2n$. Letting $Z$ vary in $(\PP^2)^{[n]}$, we see that the dimension of $I_{r,\lambda,n}$ is equal to $N = \dim R(v)$ since the following lemma guarantees that the jump in dimension of the fibers of $I_{r,\lambda,n}$ over $W_k$ for $k < n$ is strictly less than the codimension of $W_k$ in $\op{Hom}(\OO(2)^C,\OO_Z)$.

In fact, this dimension count will complete the proof of (a) as follows. For every $N$-dimensional component of $I_{r,\lambda,n}$, the fiber over a general point in $P(\mathcal{E}^C)$ must contain a non-empty open set in the fiber of $I_{r,\lambda,n}$ for dimension reasons. But the fibers of $I_{r,\lambda,n}$ are projective spaces, so the intersection of two open sets contains a non-empty open set, hence there can be only one component.

\begin{lem} Assume $\lambda \gg 0$ and $1 \le k < n$. Then for all $g \in W_k$,
\[
	\op{ext}^1(\OO^A \oplus \OO(1)^B,\op{ker}g)  < \op{codim} W_k = (n-k)(C-k).
\]
\end{lem}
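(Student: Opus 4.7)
The plan is to express $\op{ext}^1(\OO^A \oplus \OO(1)^B, \op{ker} g)$ explicitly in terms of image dimensions of multiplication maps into the zero-dimensional sheaf $\op{im}(g) \subseteq \OO_Z$, and then show that it remains strictly below $(n-k)(C-k) \sim (n-k)\lambda$ as $\lambda \to \infty$, exploiting that $A$ is independent of $\lambda$ while $B$ and $C$ grow linearly with $B - (C-k) = -(n-1)r + n + k = O(1)$.

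First I will reduce to the kernel $\mathcal K$ of a map $h \colon \OO(2)^k \to \OO_Z$ whose $k$ columns are linearly independent in $H^0(\OO_Z(-2))$. Since $g$ factors as $g = h \circ M$ with $M \colon \OO(2)^C \twoheadrightarrow \OO(2)^k$ surjective, there is a sequence $0 \to \OO(2)^{C-k} \to \op{ker} g \to \mathcal K \to 0$, and $\OO(2)^{C-k}$ has vanishing $\op{Ext}^i(\OO \oplus \OO(1), -)$ for $i \ge 1$, so the required Ext equals $\op{ext}^1(\OO^A \oplus \OO(1)^B, \mathcal K)$. Applying $\op{Hom}(\OO^A \oplus \OO(1)^B, -)$ to $0 \to \mathcal K \to \OO(2)^k \to \op{im}(h) \to 0$ and using $H^1$ and $H^2$ vanishing for line bundles of degree $\ge -1$ on $\PP^2$, the Ext splits as $A \cdot h^1(\mathcal K) + B \cdot h^1(\mathcal K(-1))$; the long exact sequence identifies $h^1(\mathcal K(d)) = m - m_d$, where $m$ is the length of $\op{im}(h)$ and $m_d$ is the dimension of the image of $H^0(\OO(2+d)^k) \to H^0(\op{im}(h)(d))$.

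The key claim is that $m - m_{-1} \le n - k - 1$ for all $g \in W_k$ with $1 \le k < n$. A generic linear form on $\PP^2$ restricts to a unit in $\OO_Z$, so multiplication by it is injective on the column span $S \subseteq H^0(\OO_Z(-2))$ of $h$, giving $m_{-1} \ge k$; it then suffices to show $m_{-1} \ge k + 1$ when $m > k$, as the case $m = k$ yields $m - m_{-1} \le 0 \le n - k - 1$. To this end I fix such a linear form $\tau$ as a trivialization of $\OO(1)|_Z$, so under the induced identifications of $H^0(\OO_Z(d))$ with $\OO_Z$ the subspace $H^0(\OO(1)) \cdot S$ automatically contains $S$ (since $1$ belongs to $R_1 := \{h|_Z / \tau : h \in H^0(\OO(1))\}$). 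If $m_{-1} = k$, then $H^0(\OO(1)) \cdot S = S$, so $S$ is stable under multiplication by $R_1$; since $R_1$ together with $\CC$ generates $\OO_Z$ as a $\CC$-algebra (the image of $\op{Sym}^\bullet H^0(\OO(1)) \to \OO_Z$ is all of $\OO_Z$), $S$ is in fact an ideal of $\OO_Z$ and hence coincides with the $\OO_{\PP^2}$-submodule $\op{im}(h)$ it generates, forcing $m = k$ and contradicting the hypothesis.

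Combining $m - m_0 \le n$ with $m - m_{-1} \le n - k - 1$ yields
\[
\op{ext}^1(\OO^A \oplus \OO(1)^B, \op{ker} g) \le A n + B(n - k - 1).
\]
Subtracting this from $(n-k)(C-k) = (n-k-1)(C-k) + (C-k)$ gives a gap of at least
\[
(n-k-1)\big((n-1)r - n - k\big) + (C - k - An),
\]
whose first term is nonnegative for $r \ge 2$ (either $(n-1)r - n - k \ge 0$ directly, or $k = n-1$ makes the prefactor vanish) and whose second term grows like $\lambda$, so the total is positive for $\lambda \gg 0$. The main obstacle will be the algebraic step that $R_1$-invariance of $S$ forces $S$ to be an $\OO_Z$-ideal; for non-reduced $Z$ with $S$ concentrated in higher-order jets, a stalk-wise Nakayama argument should suffice but requires careful bookkeeping.
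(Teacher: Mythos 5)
Your proof is correct and rests on the same mechanism as the paper's: both arguments come down to showing that the rank of the multiplication map from linear forms into $H^0(\OO_Z)$ is at least $k+1$, which bounds each of the $h^1$ terms by $n-k-1$, and then a routine comparison of growth rates in $\lambda$ closes the gap. The variations are genuine but minor. The paper phrases the key step as: if the image of $H^0(\OO(1)^C)\to H^0(\OO_Z)$ were exactly the $k$-plane $H_\ell$, then (since $R_d$ is spanned by $d$-fold products of $R_1$) every twisted image would equal $H_\ell$, contradicting Serre vanishing applied to $\ker g$ (which implicitly uses that $g$ is surjective, as it is in the diagram $(\star)$). You instead first factor $g$ through a rank-$k$ map $h$, reduce the Ext computation to $\ker h$ via the split-off $\OO(2)^{C-k}$, and then argue algebraically: if $R_1\cdot S=S$, then $S$ is stable under the algebra generated by $R_1$ and $\CC$, which is all of $\OO_Z$, so $S$ is an ideal and hence $\op{im}(h)=S$, forcing $m=k$. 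That replaces Serre vanishing with an explicit algebra argument and makes the $m=k$ base case transparent; it also works without assuming $g$ surjective, which tidies up the stated generality of the lemma. Your bound $An + B(n-k-1)$ is looser than the paper's $(n-k-1)(A+B)$ by $A(k+1)$, but since $A$ is independent of $\lambda$ this costs nothing. Finally, the ``main obstacle'' you flag at the end is already resolved by the argument you gave: since $\OO_Z$ is zero-dimensional, a sub-vector-space of $H^0(\OO_Z)$ closed under multiplication by the generating set $R_1$ is an $\OO_Z$-ideal, and hence already the $\OO_{\PP^2}$-submodule it generates; no stalk-wise Nakayama bookkeeping is needed.
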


\begin{proof} We think of $H^0(\OO(1))$ as the space of linear forms on $\PP^2$. Fixing $\ell \in H^0(\OO(1))$ that does not vanish on any points in the support of $Z$, we can identify every map $\OO(2) \to \OO_Z$ with a global section of $\OO_Z$ by multiplying by $\ell^2$. Thus $g \in W_k$ determines a $k$-plane $H_{\ell} \subset H^0(\OO_Z)$, and the image of the global section map $H^0(\OO(1)^C) \to H^0(\OO_Z)$ is obtained by multiplying $H_{\ell}$ by the space of rational functions $H^0(\OO(1))/\ell$ and hence contains $H_{\ell}$.

We claim that the rank of $H^0(\OO(1)^C) \to H^0(\OO_Z)$ is $\ge k+1$. If not, then the image must be exactly $H_{\ell}$. But then the image of every map $H^0(\OO(d)^C) \to H^0(\OO_Z)$ obtained by twisting $g$ is also $H_{\ell}$, which contradicts the fact that $H^0(\OO(d)^C) \to H^0(\OO_Z)$ is surjective for $d \gg 0$ by Serre vanishing applied to $\ker g$. By the same argument, the rank of $H^0(\OO(2)^C) \to H^0(\OO_Z)$ is $\ge k+1$ as well. 


Thus the left side of the proposed inequality is
\[
	A \op{ext}^1(\OO,\op{ker}g) + B \op{ext}^1(\OO(1),\op{ker}g) \le (n-k-1)(A+B),
\]
so since $A+B-C = r+1$, it suffices to show $(n-k)(1+r+k) < A+B$, which is achieved by choosing $\lambda$ sufficiently large. 
\end{proof}


\subsection{The Chern characters $e$ and $(1,0,-n)$ are candidates for strange duality}

The non-emptiness of $M(e^{\vee})$ follows from Theorem \ref{t:delta_curve}, $h^2(\hat{E} \otimes \mathcal{I}_Z) = 0$ for all $\hat{E} \in M(e^{\vee})$ and $\mathcal{I}_Z \in (\PP^2)^{[n]}$ by stability, and $\TOR^1(\hat{E},\mathcal{I}_Z)=\TOR^2(\hat{E},\mathcal{I}_Z)=0$ whenever $\hat{E}$ is locally free along $Z$, which holds away from codimension $(r-1)+2 > 2$ in $M(e^{\vee}) \times (\PP^2)^{[n]}$. The last condition to check is that $h^0(\hat{E} \otimes \mathcal{I}_Z) = 0$ for some pair $(\hat{E},\mathcal{I}_Z) \in M(e^{\vee}) \times (\PP^2)^{[n]}$.

\begin{lem}\label{l:theta} Suppose $\lambda \gg 0$. Then general $\hat{E} \in M(e^{\vee})$ and $\mathcal{I}_Z \in (\PP^2)^{[n]}$ satisfy $h^0(\hat{E}\otimes\mathcal{I}_Z)=0$.
\end{lem}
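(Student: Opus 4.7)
The plan is to use the $(\dagger)$-resolution of a general $\hat E \in M(e^{\vee})$ supplied by Proposition \ref{p:EV_res} to reduce the vanishing $h^0(\hat E \otimes \mathcal{I}_Z) = 0$ to the injectivity of an explicit linear map. Since a general $\hat E$ is locally free, tensoring the resolution $0 \to \OO(-2)^c \to \OO(-1)^b \oplus \OO^a \to \hat E \to 0$ with $\mathcal{I}_Z$ preserves exactness. Passing to cohomology and using $H^0(\mathcal{I}_Z(k)) = 0$ for $k \in \{-2,-1,0\}$ then gives
\[
  h^0(\hat E \otimes \mathcal{I}_Z) \;=\; \ker\bigl(\phi \colon H^1(\mathcal{I}_Z(-2))^c \to H^1(\mathcal{I}_Z(-1))^b \oplus H^1(\mathcal{I}_Z)^a\bigr).
\]
The sequences $0 \to \mathcal{I}_Z(k) \to \OO(k) \to \OO_Z \to 0$ give $H^1(\mathcal{I}_Z(k)) \cong \CC^n$ for $k = -1, -2$ and $H^1(\mathcal{I}_Z) \cong \CC^{n-1}$, and a direct computation with $a = nr$, $b = \lambda - 2(n-1)r$, $c = \lambda - (n-1)r$ shows that both the source and target of $\phi$ have dimension $n\lambda - n(n-1)r$. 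Hence injectivity of $\phi$ is equivalent to bijectivity.

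By upper semicontinuity of $h^0$ in families, it suffices to exhibit a single pair $(\hat E_0, Z_0)$ with $\phi$ injective. Choose $Z_0 = \{p_1, \dots, p_n\}$ consisting of distinct points. After identifying $H^1(\mathcal{I}_{Z_0}(k))$ with $\bigoplus_i \OO(k)|_{p_i}$ for $k=-1,-2$ and $H^1(\mathcal{I}_{Z_0})$ with the quotient of $\bigoplus_i \OO|_{p_i}$ by its diagonal copy of $\CC$, the map $\phi$ factors as the direct sum of the fiber evaluations $M|_{p_i}\colon \OO(-2)^c|_{p_i} \to \OO(-1)^b|_{p_i} \oplus \OO^a|_{p_i}$ of the resolution matrix $M$, followed by the projection quotienting the $\OO^a$-summand by its diagonal. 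Local freeness of $\hat E_0$ forces each $M|_{p_i}$ to be injective, so $\ker\phi$ is identified with the intersection of an $nc$-dimensional subspace with an $a$-dimensional diagonal subspace inside $\bigoplus_i \OO^{b+a}|_{p_i}$; the expected intersection dimension is $nc + a - n(b+a) = a - nr = 0$.

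The hard part will be making this transversality rigorous: one must produce an explicit $M$ (with stable, locally-free cokernel, as guaranteed by the converse in Proposition \ref{p:EV_res}) and explicit points $p_i$ for which the intersection is indeed trivial. The hypothesis $\lambda \gg 0$ provides abundant parameters --- the space of resolution matrices $\Hom(\OO(-2)^c, \OO(-1)^b \oplus \OO^a)$ grows polynomially in $\lambda$ --- so either a direct block-matrix construction or a parameter-dimension count showing that the evaluation map $M \mapsto (M|_{p_i})_i$ does not land entirely in the bad locus should close the argument.
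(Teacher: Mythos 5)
Your reduction is correct and the arithmetic checks out: with $a=nr$, $b=\lambda-2(n-1)r$, $c=\lambda-(n-1)r$, tensoring a $(\dagger)$-resolution of $\hat E$ by $\mathcal{I}_Z$, taking cohomology, and using $H^1(\mathcal{I}_Z(-2)) \cong H^1(\mathcal{I}_Z(-1))\cong \CC^n$ and $H^1(\mathcal{I}_Z)\cong\CC^{n-1}$ shows that $h^0(\hat E \otimes \mathcal{I}_Z)$ is the kernel of a map $\phi$ between spaces of equal dimension $n\lambda - n(n-1)r$. The identification of $\phi$ with fiber evaluations of the resolution matrix followed by the quotient by the diagonal of the $\OO^a$-block is also right (it follows from functoriality of the connecting homomorphism), and your expected-dimension count for the intersection does reduce to $0$. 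This is a genuinely different route from the paper, which instead proves the lemma by \emph{induction on $\lambda$}: the base case is an explicit stable sheaf $\hat E = \mathcal{I}_{Z'}(k)^r$ with $Z'\cup Z$ not on a degree-$k$ curve, and the inductive step performs an elementary modification $0 \to E_{\lambda+1} \to E_\lambda \to \OO_\ell(2) \to 0$ along a general line and uses $\Ext^1(\OO_\ell(2),\mathcal{I}_Z) = H^1(\OO_\ell(-1)) = 0$ together with Proposition \ref{p:res_crit} to propagate both the vanishing and the existence of a $(\dagger)$-resolution. The paper's approach sidesteps any transversality claim entirely.

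That transversality claim is precisely where your proof has a genuine gap, and you flag it yourself. Unwinding your setup, $\ker\phi$ is nontrivial iff there exists $0 \ne v \in \CC^a$ with $v \in \bigcap_{i=1}^n B_i(\ker A_i)$, where $M|_{p_i} = (A_i,B_i)$ with $A_i\colon \CC^c \to \CC^b$ and $B_i\colon \CC^c \to \CC^a$. For $A_i$ of full rank each $B_i(\ker A_i) \subset \CC^{nr}$ has dimension $\le (n-1)r$, and the intersection of $n$ such subspaces has expected dimension $0$; but ``expected'' is not ``actual.'' You must show that the constrained matrices $M$ (a matrix of linear and quadratic forms with stable locally free cokernel) produce, at $n$ general points, tuples $(A_i,B_i)_i$ whose corresponding subspaces are in general position. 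This requires either an explicit construction (e.g.\ a block-diagonal $M$ adapted to the $p_i$, then checking its cokernel is stable) or a dimension count on the incidence variety of $(M, Z, v)$ showing the bad locus has positive codimension in the resolution space. Neither is carried out, and the stability constraint on $M$ means one cannot simply invoke genericity of arbitrary $n$-tuples of matrices. Until that step is filled in, the argument is a plausible strategy rather than a proof; the paper's inductive construction is one way to avoid having to close exactly this gap.
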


\begin{proof}[Proof of Lemma \ref{l:theta}] Induction on $\lambda$. We will emphasize the dependence of $e$ on $\lambda$ by writing $e = e_{\lambda}$. For each $\lambda$, it suffices to construct a single pair $(\hat{E},\mathcal{I}_Z)$ such that $h^0(\hat{E} \otimes \mathcal{I}_Z)=0$, where $\hat{E}$ is in $M(e_{\lambda}^{\vee})$ or in $R(e_{\lambda}^{\vee})$ (the space of ($\dagger$)-resolutions). This is because the vanishing $h^0(\hat{E} \otimes I_Z)=0$ is open in families by Cohomology and Base Change (\cite{hartshorne_algebraic_1977}).

For a base case, let $\lambda = rk$ for $k$ minimal such that $\chi(\OO_{\PP^2}(k)) \ge n$. Choose $Z$ general of length $n$ and $Z'$ such that $|Z'|+|Z|=\chi(\OO(k))$ and $Z' \cup Z$ is not contained on a curve of degree $k$. Then $\hat{E} = \mathcal{I}_{Z'}(k)^r$ is in $M(e_{\lambda}^{\vee})$ and $h^0(\hat{E} \otimes \mathcal{I}_Z)=0$ since $h^0(\mathcal{I}_{Z' \cup Z}(k))=0$ by our choice of $Z'$.

For the inductive step $\lambda \implies \lambda + 1$, we may assume there exist general $E_\lambda^* \in R(e_{\lambda}^{\vee})$ and $\mathcal{I}_Z \in (\PP^2)^{[n]}$ satisfying $h^0(E_\lambda^* \otimes \mathcal{I}_Z)=0$. Since $E_\lambda^*$ is general, it is locally free with dual $E_\lambda$. Let $\ell$ be a general line in $\PP^2$, for which $E_{\lambda}|_{\ell}$ splits into a direct sum of line bundles, at least two of which have negative degree by the Grauert-M\"{u}lich Theorem (\cite{huybrechts_geometry_2010}). Choosing a surjection $E_{\lambda}|_{\ell} \twoheadrightarrow \OO_{\ell}(2)$ yields an elementary modification
\[
	0 \to E_{\lambda+1} \to E_{\lambda} \to \OO_{\ell}(2) \to 0
\]
where restricting the sequence to $\ell$ produces the kernel $0 \to \OO_{\ell}(1) \to E_{\lambda+1}|_{\ell} \to E_{\lambda}|_{\ell}$. Exactness in the middle of the sequence
\[
	0 = \op{Hom}(E_{\lambda},\mathcal{I}_Z) \to \op{Hom}(E_{\lambda+1},\mathcal{I}_Z) \to \op{Ext}^1(\OO_{\ell}(2),\mathcal{I}_Z) \simeq H^1(\OO_{\ell}(-1)) = 0 
\]
yields the vanishing $h^0(E_{\lambda+1}^* \otimes \mathcal{I}_Z)=\op{hom}(E_{\lambda+1},\mathcal{I}_Z)=0$.

Consider the dual sequence of sheaves
\[
	0 \to E_{\lambda}^* \to E_{\lambda+1}^* \to \OO_{\ell}(-1) \to 0.
\]
Since $E_{\lambda}^*$ has a ($\dagger$)-resolution, $\OO_{\ell}(-1)$ has no cohomology, and $\OO_{\ell}(-2)$ has only cohomology in degree 1, Proposition \ref{p:res_crit} ensures that $E_{\lambda+1}^*$ has a ($\dagger$)-resolution. This completes the proof.
\end{proof}


\subsection{Construction of a suitable $V$ with an isolated quotient}

We prove (c) in Proposition \ref{p:incidence} by constructing a vector bundle $V^*$ with a resolution and an appropriate section, and then dualizing.

Let $E$ be a general vector bundle in $M(e)$ and $\mathcal{I}_Z$ in $(\PP^2)^{[n]}$ be general, which ensures that $\op{hom}(E,\mathcal{I}_Z)=0$ by Lemma \ref{l:theta}. Choose a general surjection $E^* \to \OO_Z$ that induces a surjection on global sections. Let $J$ be the kernel, which fails to be locally free along $Z$. We will show that a general extension $0 \to \OO \to V^* \to J \to 0$ produces $V^*$ that is locally free and has a ($\ddagger$)-resolution. This will complete the argument because dualizing the sequence defining $V^*$ yields the short exact sequence $0 \to E \to V \to \mathcal{I}_Z \to 0$, which is an isolated point of $\op{Quot}(V,(1,0,-n))$ by construction, and $V$ has a resolution since $V^*$ does.

\begin{lem} \begin{enumerate}[(1)]
\item $V^*$ is locally free.
\item $J$ has a ($\dagger$)-resolution.
\item $V^*$ has a ($\dagger$)- resolution.
\end{enumerate}
\end{lem}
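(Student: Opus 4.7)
The plan is to dispatch parts (2) and (3) by direct application of the resolution criterion of Proposition \ref{p:res_crit}, bootstrapping along the two short exact sequences that define $J$ and $V^*$, and to handle part (1) via a Serre-construction argument identifying the locally free extensions of $J$ by $\OO$ with the nowhere-vanishing sections of the length-$n$ sheaf $\EXT^1(J,\OO)$. I would prove (2) first, since the cohomological information about $J$ obtained there feeds directly into both (3) and (1).

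For (2), the three cohomological vanishings in Proposition \ref{p:res_crit} applied to $J$, namely $h^0(J(-1)) = h^1(J) = h^2(J(-1)) = 0$, drop out of the long exact sequences of $0 \to J \to E^* \to \OO_Z \to 0$ and its $\OO(-1)$-twist, using $H^{\ge 1}(\OO_Z) = H^{\ge 1}(\OO_Z(-1)) = 0$ and the hypothesis that $H^0(E^*) \twoheadrightarrow H^0(\OO_Z)$. These reduce to the analogous vanishings for $E^*$, which are immediate from its ($\dagger$)-resolution and the fact that line bundles on $\PP^2$ have cohomology only in extreme degrees. The fourth condition, injectivity of $\Hom(\OO(-1),\OO) \otimes H^0(J) \to H^0(J(1))$, is the restriction along $H^0(J) \hookrightarrow H^0(E^*)$ of the corresponding injection for $E^*$, itself supplied by the ``only if'' direction of Proposition \ref{p:res_crit}.

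For (3), Proposition \ref{p:res_crit} applied to $V^*$ via $0 \to \OO \to V^* \to J \to 0$ goes through analogously: the three vanishings follow from those for $J$ together with $H^{\ge 1}(\OO) = 0$ on $\PP^2$, and the multiplication injectivity reduces via a short diagram chase to the just-proved injectivity for $J$ and the trivial one for $\OO$, using the short exact sequences $0 \to H^0(\OO(d)) \to H^0(V^*(d)) \to H^0(J(d)) \to 0$ for $d = 0, 1$.

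The main obstacle is (1). Apply $\HOM(-, \OO)$ to $0 \to J \to E^* \to \OO_Z \to 0$: local freeness of $E^*$ kills $\EXT^{\ge 1}(E^*, \OO)$, and the codimension-two support of $\OO_Z$ gives $\EXT^{<2}(\OO_Z, \OO) = 0$, so $\HOM(J, \OO) \simeq E$ and $\EXT^1(J, \OO) \simeq \EXT^2(\OO_Z, \OO)$ is a length-$n$ sheaf supported on $Z$ (just $\OO_Z$ since a general $Z$ is reduced), whose global sections decompose as the direct sum of its stalks over $Z$. Dualizing the extension $0 \to \OO \to V^* \to J \to 0$ identifies $\EXT^1(V^*, \OO)$ with the cokernel of the connecting map $\OO \to \EXT^1(J, \OO)$ read off the extension class, while $\EXT^2(V^*, \OO) = \EXT^2(J, \OO) = 0$; hence $V^*$ is locally free exactly when this section of $\EXT^1(J, \OO)$ is nonzero at every point of $Z$. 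The local-to-global $\Ext$ spectral sequence gives
\[
\Ext^1(J, \OO) \to H^0(\EXT^1(J, \OO)) \to H^2(\HOM(J, \OO)) = H^2(E),
\]
and $H^2(E) = 0$ follows from taking cohomology of the dual ($\dagger$)-resolution $0 \to E \to \OO^a \oplus \OO(1)^b \to \OO(2)^c \to 0$. Surjectivity of the first map onto the length-$n$ space $H^0(\EXT^1(J,\OO))$ then forces a general extension class to restrict to a nonzero element at each point of $Z$, so a general $V^*$ is locally free.
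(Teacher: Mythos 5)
Your parts (2) and (3) are essentially the paper's argument: both apply Proposition~\ref{p:res_crit} along the defining short exact sequences, with the only cosmetic difference being that the paper's (3) skips the criterion and simply pulls back the ($\dagger$)-resolution of $J$ along $V^* \twoheadrightarrow J$ to produce a resolution of $V^*$ with one extra $\OO$ summand, where you re-verify the four conditions for $V^*$ directly (correctly, via the three-lemma diagram chase on the multiplication maps).

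Your part (1), by contrast, is a genuinely different route. The paper argues geometrically inside the space $\Ext^1(J,\OO)$: it classifies the ways an extension $\hat V$ can fail to be locally free (via $\hat V \hookrightarrow \hat V^{\vee\vee}$, giving a pullback from an extension by a modified sheaf $\tilde J$ with cokernel $\OO_{W'}$ for some $W' \subset Z$), then does a dimension count $\ext^1(\tilde J,\OO) < \ext^1(J,\OO)$ to show the bad locus has codimension $\ge 1$. You instead run the Serre construction: you compute $\EXT^1(J,\OO)\simeq\EXT^2(\OO_Z,\OO)\simeq\OO_Z$, observe that local freeness of $V^*$ is equivalent to the extension class mapping to a nowhere-vanishing section of $\OO_Z$, and then use the low-degree local-to-global exact sequence plus $H^2(\HOM(J,\OO))=H^2(E)=0$ (from the dual resolution of $E$) to see that $\Ext^1(J,\OO)\twoheadrightarrow H^0(\OO_Z)$, so a general class hits a nowhere-vanishing section. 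Both arguments are correct. The paper's version needs slightly less machinery (no spectral sequence, no computation of $\EXT^\bullet$) and extends more readily to non-reduced $Z$, since it never needs $\EXT^2(\OO_Z,\OO)\cong\OO_Z$; yours is shorter and more conceptual, making the role of the Cayley--Bacharach/Serre mechanism transparent and producing the surjectivity $\Ext^1(J,\OO)\to H^0(\EXT^1(J,\OO))$ as a reusable fact. Note you do quietly use reducedness of $Z$ at the step identifying $\EXT^2(\OO_Z,\OO)$ with $\OO_Z$, which is harmless here since $Z$ is chosen general, but worth flagging.
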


\begin{proof}
(1): If an extension $\hat{V}$ fails to be locally free along a subscheme $W \subset Z$, whose residual we denote $W' \subset Z$, then the inclusion $\hat{V} \to \hat{V}^{\vee \vee}$ yields a diagram
\[\xymatrix{
	0 \ar[r] & \OO \ar@{=}[d] \ar[r] & \hat{V} \ar[d] \ar[r] & J \ar[d] \ar[r] & 0 \\
    0 \ar[r] & \OO \ar[r] & \hat{V}^{\vee \vee} \ar[r] & \tilde{J} \ar[r] & 0
}\]
in which the extension in the top row is pulled back from the extension in the bottom row. Here $\tilde{J}$ is a subsheaf of $E^*$ with cokernel $\OO_{W'}$. Since
\[
	\op{ext}^1(\tilde{J},\OO) = h^1(\tilde{J}(-3)) = h^1(E^*(-3))+|W'| < h^1(E)+n = \op{ext}^1(J,\OO)
\]
and there are only finitely many such $\tilde{J}$ (corresponding to finitely many subschemes $W'$ of $Z$), the total locus in $\op{Ext}^1(J,\OO)$ of all extensions pulled back from a $\tilde{J}$ has codimension $\ge 1$. Avoiding this locus yields $V^*$ locally free.

(2): We will use the fact that $E^*$ has a ($\dagger$)-resolution (since it is general in $M(e^\vee)$) and the criterion provided in Proposition \ref{p:res_crit}. Consider the sequence
\[
	0 \to J \to E^* \to \OO_Z \to 0
\]
defining $J$. Since $H^0(E^*) \to H^0(\OO_Z)$ is surjective by construction, $h^1(J)=0$. The other properties for $J$ required in Proposition \ref{p:res_crit} follow immediately from the same properties for $E^*$.

(3): Pulling back the extension $0 \to \OO \to V^* \to J \to 0$ using the map to $J$ in the ($\dagger$)-resolution of $J$ yields a $(\dagger)$-resolution of $V^*$ that looks like the resolution for $J$ with one additional $\OO$.
\end{proof}







\subsection{Deforming the isolated quotient}

To prove (d), we will deform the isolated quotient
\[
	0 \to E \to V \to \mathcal{I}_Z \to 0
\]
constructed above to conclude that general resolutions in $R(v)$ have isolated $\mathcal{I}_Z$ quotients.
Let $\mathcal{V}$ be the universal bundle over $R(v)$ and consider the relative quot scheme $\mathcal{Q} = \op{Quot}(\mathcal{V},(1,0,-n))$ over $R(v)$, which has $[V \to \mathcal{I}_Z]$ as a point in the fiber over $V$. A standard theorem for quot schemes ensures that the dimension of the component of the relative quot scheme containing $[V \to \mathcal{I}_Z]$ is at least
\[
	\op{hom}(E,\mathcal{I}_Z) - \op{ext}^1(E,\mathcal{I}_Z) + \dim R(v) = \dim R(v)
\]
(see \cite{kollar_rational_1996} Theorem 2.15 for a statement in the context of Hilbert schemes). Since the fiber dimension at $[V \to \mathcal{I}_Z]$ is 0, upper semicontinuity implies that the fiber dimension is generically 0, so there is an open set $\mathcal{U} \subset \mathcal{Q}$ consisting entirely of points at which the relative Zariski tangent space is zero-dimensional, namely the map to $R(v)$ is \'{e}tale.

We now show that $\mathcal{U}$ consists entirely of $\mathcal{I}_Z$ quotients by ruling out the other possible sheaves with Chern character $(1,0,-n)$ listed in Lemma \ref{l:quotients}. If the cokernel $F$ in  $0 \to E \to V \to F \to 0$ has zero-dimensional torsion, then $\op{hom}(E,F) > 0$. If $F$ has torsion along a curve, then $V$ has a map to a line bundle of negative degree, which is impossible since $V$ has a resolution. Thus the quotients in $\mathcal{U}$ are all ideal sheaves.

But since every map from sheaves $V$ with resolutions to ideal sheaves $\mathcal{I}_Z$ occurs in $I_{r,\lambda,n}$, this yields an injective morphism $\mathcal{U} \hookrightarrow I_{r,\lambda,n}$. Set theoretically, the map is obtained by extending a quotient $V \twoheadrightarrow \mathcal{I}_Z$ to a diagram ($\star$). We check below that this map is algebraic. The image has dimension $\dim R(v)$, hence must be contained in the unique component of $I_{r,\lambda,n}$ of this dimension. The complement $I_{r,\lambda,n} \setminus \op{im}\mathcal{U}$ of this image is thus of dimension $< \dim R(v)$. Then $U \subset R(v)$, the complement of the image of $I_{r,\lambda,n} \setminus \op{im}\mathcal{U} \to R(v)$, is open in $R(v)$. By construction, the fibers of $I_{r,\lambda,n}$ over $U$ are fully contained in the image of $\mathcal{U}$.

We claim that the composition of the inclusions $I_{r,\lambda,n}|_U \hookrightarrow \mathcal{U} \hookrightarrow \mathcal{Q}|_U$ is an isomorphism. We need to rule out quotients in $\mathcal{Q}|_U$ at which the map to $R(v)$ is not \'{e}tale, which we can do by showing that every such quotient yields a (possibly non-surjective) map to $V \to \mathcal{I}_Z$, which is impossible since the full fibers of $I_{r,\lambda,n}$ are contained in $\mathcal{U}$. A non-etale point in $\mathcal{Q}$ must be either a quotient $0 \to E \to V \to \mathcal{I}_Z \to 0$ for which $\op{hom}(E,\mathcal{I}_Z) > 0$ or a quotient $V \twoheadrightarrow F$ where $F$ is of type (3) or (4) in Lemma \ref{l:quotients}. We have already ruled out type (4) quotients since $V$ has a resolution. The type (3) quotients yield quotients $V \twoheadrightarrow F \twoheadrightarrow \mathcal{I}_W$, where $|W| > n$, and every choice of length-$n$ subscheme $Z \subset W$ gives rise to an inclusion $\mathcal{I}_W \hookrightarrow \mathcal{I}_Z$ and hence a non-surjective map $V \to \mathcal{I}_Z$. Thus the map $I_{r,\lambda,n}|_U \to \mathcal{Q}|_U$ is surjective, hence an isomorphism.

Since $I_{r,\lambda,n}|_U = \mathcal{Q}|_U \to U$ is \'{e}tale, the fibers are finite and reduced. To ensure that all $Z$ occurring as quotients $V \twoheadrightarrow \mathcal{I}_Z$ consist of $n$ general distinct points, we can restrict $I_{r,\lambda,n}$ to any special locus in $(\PP^2)^{[n]}$, take the image in $R(v)$, and shrink $U$ further to avoid this image. 

To complete the proof of Proposition \ref{p:incidence}, we construct the morphism $\mathcal{U} \hookrightarrow I_{r,\lambda,n}$ algebraically by compiling the diagrams ($\star$) into a family. Over $\mathcal{U} \times \PP^2$, the universal quotient $\mathcal{V} \to \mathcal{F}$, where $\mathcal{V}$ is pulled back from $R(v)$, is a family of $(1,0,-n)$ sheaves. Thus there is a map $\phi \colon \mathcal{U} \to (\PP^2)^{[n]}$ such that $\mathcal{F}$, up to a twist by a line bundle $L$ on $\mathcal{U}$, is pulled back from $\mathcal{I}_{\mathcal{Z}}$. We will construct a surjective morphism $\alpha$ yielding a diagram
\[\xymatrix{
	\mathcal{V} \ar[d] \ar[r] & \OO_{R(v)}(-1) \otimes p^*(\OO_{\PP^2}^A \oplus \OO_{\PP^2}(1)^B) \ar[d]^{\alpha} \ar[r] & p^*\OO_{\PP^2}(2)^C \ar[d]^{\beta} \\
	L \otimes (\phi \times \op{id})^*\mathcal{I}_{\mathcal{Z}} \ar[r] & L \otimes \OO_{\mathcal{U} \times \PP^2} \ar[r] & L \otimes (\phi \times \op{id})^* \OO_{\mathcal{Z}}
}\]
on $\mathcal{U} \times \PP^2$, where we have omitted some of the pull backs from the notation. We construct $\alpha$ locally. First, trivialize $\OO_{R(v)}(-1)$ and $L$ on open sets $\mathcal{U}_{i} \subset \mathcal{U}$ such that $\OO_{\mathcal{U}_i}$ has no higher cohomology. On $\mathcal{U}_i \times \PP^2$ the left side of the diagram is
\[\xymatrix{
	\mathcal{V}|_{\mathcal{U}_i} \ar[d] \ar[r] & p^*\OO_{\PP^2}^A \oplus p^*\OO_{\PP^2}(1)^B \ar[d]^{\alpha_i}\\	(\phi|_{\mathcal{U}_i}\times \op{id})^*\mathcal{I}_{\mathcal{Z}} \ar[r] & \OO_{\mathcal{U}_i \times \PP^2}
}\]
and $\alpha_i$ is the unique lift of the composition $[\mathcal{V}|_{\mathcal{U}_i} \to \OO_{\mathcal{U}_i \times \PP^2}]$ in the sequence
\[
	0=\op{Hom}(p^*\OO(2)^C,\OO) \to \op{Hom}(p^*(\OO^A \oplus \OO(1)^B),\OO) \to \op{Hom}(\mathcal{V},\OO) \to \op{Ext}^1(p^*\OO(2)^C,\OO)=0.
\]
The vanishing $\op{ext}^1(p^*\OO_{\PP^2}(2),\OO) = h^1(p^* \OO_{\PP^2}(-2))=0$ follows from the K\"{u}nneth formula and the vanishing of the higher cohomology of $\OO_{\mathcal{U}_i}$. These $\alpha_i$ are surjective since they are nonzero and they glue together into a map $\alpha$ since they agree on fibers over points in $\mathcal{U}$. We get $\beta$ as the induced map on cokernels.

Since $\alpha$ vanishes on $p^*\OO_{\mathbb{P}^2}(1)^B$, it induces a map $\mathcal{U} \to \PP^{A-1}$. To lift the map $\mathcal{U} \xrightarrow{\phi} (\PP^2)^{[n]}$ to $P(\mathcal{E}^{C})$, we push forward $\beta$ by $q$. Taking the product of these maps with the projection $\mathcal{U} \to R(v)$ yields a map $\mathcal{U} \to R(v) \times P(\mathcal{E}^C) \times \PP^{A-1}$ that coincides with our set-theoretic description on closed points. Since $\mathcal{U}$ is smooth, this guarantees that the image is contained in $I_{r,\lambda,n}$, which completes the argument.


\subsection{Resolutions and global generation}

We let $\xi$ be a Chern character on $\PP^2$ satisfying some mild inequalities and prove that general stable sheaves in $M(\xi)$ have particularly nice resolutions. As a corollary, we deduce a statement about when general stable sheaves are globally generated. We ignore the assumptions on $r,\lambda,a,b,c$ made at the beginning of the section.

\begin{prop}\label{p:gen_res} Let $\xi = (r,\lambda,d)$ be a Chern character on $\PP^2$ such that
\[
	r \ge 1, \quad \lambda \ge 0, \quad \chi(\xi) \ge 0, \quad \text{and} \quad d < -\tfrac{\sqrt{5}}{2}\lambda.
\]
Then the general sheaf $G$ in $M(\xi)$ has a resolution of the form
\[
	0 \to \OO(-2)^{c} \to \OO(-1)^{b} \oplus \OO^{a} \to G \to 0,
\]
where
\[
	a = \chi(\xi) = r+\tfrac{3}{2}\lambda+d; \qquad b= -2(\lambda+d); \qquad c = -\tfrac{1}{2}\lambda - d.
\]
Conversely, cokernels of general maps $\OO(-2)^{c} \to \OO(-1)^{b} \oplus \OO^{a}$ are stable sheaves in $M(\xi)$.
\end{prop}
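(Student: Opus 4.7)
The plan is to invoke the cohomological criterion from Proposition \ref{p:res_crit}. First I would solve the three equations coming from matching the Chern character of the proposed resolution $0 \to \OO(-2)^c \to \OO(-1)^b \oplus \OO^a \to G \to 0$ against $\xi$; these uniquely determine $a = \chi(\xi)$, $b = -2(\lambda+d)$, $c = -\tfrac{1}{2}\lambda - d$. Each is a non-negative integer under the hypotheses: $a \ge 0$ is the assumption $\chi(\xi) \ge 0$, while $b,c \ge 0$ follow from $d < -\tfrac{\sqrt{5}}{2}\lambda \le -\tfrac{1}{2}\lambda$ together with $\sqrt{5}/2 > 1$ giving $d + \lambda < 0$.

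Next I would verify the three conditions of Proposition \ref{p:res_crit} for a general $G \in M(\xi)$. The vanishings $h^2(G(-1)) = 0$ and $h^2(G) = 0$ follow from Serre duality and stability, since for $\mu$-stable $G$ with $\lambda \ge 0$ the slopes of $G^\vee(-2)$ and $G^\vee(-3)$ are strictly negative, so these sheaves have no sections. The vanishing $h^1(G) = 0$ is then equivalent to $h^0(G) = \chi(G) = a$, an open condition that can be checked by exhibiting a single example (for instance a direct sum of twisted ideal sheaves with the right invariants). The essential vanishing is $h^0(G(-1)) = 0$, which is where the hypothesis $d < -\tfrac{\sqrt{5}}{2}\lambda$ enters: the induced lower bound on the discriminant $\Delta(\xi)$ precludes any non-trivial map $\OO(1) \hookrightarrow G$, either via a Bridgeland-style wall analysis or by invoking known results on generic cohomology of stable sheaves on $\PP^2$ in the style of Dr\'ezet--Le Potier. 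Finally, the injectivity of the multiplication map $\Hom(\OO(-1),\OO) \otimes H^0(G) \to H^0(G(1))$ is open in families and can be verified on the same explicit example.

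For the converse, I would consider the open subscheme $R \subset \Hom(\OO(-2)^c,\OO(-1)^b \oplus \OO^a)$ parametrizing morphisms whose cokernel has Chern character $\xi$. By the direct part of the proposition, the classifying morphism $R \dashrightarrow M(\xi)$ has non-empty image in the stable locus. A dimension count comparing $\dim R$, modulo the reductive group of change-of-basis on source and target, with $\dim M(\xi) = 1 - \chi(\xi,\xi)$ from Theorem \ref{t:delta_curve}, combined with the irreducibility of $M(\xi)$, shows the image is dense. Hence cokernels of general maps are stable sheaves with the required invariants.

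The main obstacle will be the vanishing $h^0(G(-1)) = 0$: when $\lambda < r$ it is immediate from $\mu$-stability, but when $\lambda \ge r$ one must carefully exploit the discriminant hypothesis $d < -\tfrac{\sqrt{5}}{2}\lambda$ to rule out non-zero sections. The remaining steps reduce to standard semi-continuity and dimension-count arguments.
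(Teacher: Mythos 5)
Your approach differs substantially from the paper's. The paper does not use Proposition~\ref{p:res_crit} here at all; instead it invokes Proposition~\ref{p:resolution} (the Coskun--Huizenga / Dr\'ezet--Le Potier resolution by exceptional bundle triads), and the whole content of the hypotheses is that they force the \emph{corresponding exceptional slope} $\gamma$ of $\xi$ to equal $0$. The inequality $|\mu_0| < x_0 = \tfrac{3-\sqrt{5}}{2}$ unpacks to the two-sided bound on $d$, whose right half is exactly $d < -\tfrac{\sqrt{5}}{2}\lambda$ and whose left half follows from $\chi(\xi)\ge 0$. Once $\gamma = 0$, the standard decomposition $0 = (-1).1$ makes the triad $\{\OO(-2),\OO(-1),\OO\}$, and the resolution falls out; the converse is the dimension count you describe.

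The gap in your proposal is precisely the step you flag as ``the main obstacle'': the vanishing $h^0(G(-1)) = 0$ for general $G \in M(\xi)$. When $\lambda \ge r$, $\mu$-stability alone is silent, and the hypotheses permit $\lambda$ arbitrarily large relative to $r$. You gesture at ``a Bridgeland-style wall analysis'' or ``known results in the style of Dr\'ezet--Le Potier'' without identifying a theorem or carrying out an argument. But this vanishing (together with $h^1(G)=0$ and the injectivity of the multiplication map) is exactly the nontrivial input that the cited Proposition~\ref{p:resolution} packages; you cannot both use it as the engine and claim to avoid it. In effect your route recreates the criterion that the resolution statement already certifies, and then leaves the hard certification to an unstated reference. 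A secondary issue: the ``explicit example'' you propose for verifying openness, a direct sum of twisted ideal sheaves, is typically strictly semistable or unstable, hence not a point of $M(\xi)$, so irreducibility of $M(\xi)$ alone does not transfer the vanishing to the general stable point without more care. To repair the proposal along your lines, you would need to cite and apply the Dr\'ezet--Le Potier generic vanishing theorem (equivalently the Coskun--Huizenga description) with its precise hypothesis, which, after translation, is the condition $\gamma = 0$; at that point you have reproduced the paper's proof.
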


With this result, we can show that a general stable sheaf $G$ of positive rank on $\PP^2$ is globally generated when $\chi(G) \ge \op{rk}(G)+2$. The rank 1 case is an analysis of line bundles and ideal sheaves, and the rank 2 case was known to Le Potier (\cite{le_potier_stabilite_1980}).

\begin{prop}\label{p:gg} Let $\xi=(r,\lambda,d)$ be a Chern character such that $r \ge 1$, $\lambda \ge 0$, and $\chi(\xi) \ge r+2$. Then general sheaves in $M(\xi)$ are globally generated.
\end{prop}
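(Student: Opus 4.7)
My plan is to apply Proposition \ref{p:gen_res} to put $G$ in a $(\dagger)$-resolution and then translate global generation into the surjectivity of one component of the resolution map.

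First, when Proposition \ref{p:gen_res} applies, a general $G \in M(\xi)$ fits in a resolution
\[
0 \to \OO(-2)^c \xrightarrow{\phi=(\phi_1,\phi_2)} \OO(-1)^b \oplus \OO^a \to G \to 0
\]
with $a = \chi(\xi)$, $b = -2(\lambda+d)$, and $c = -\tfrac12\lambda - d$. Taking cohomology shows $h^0(G) = a$, so all global sections of $G$ factor through the $\OO^a$ summand, and $G$ is globally generated if and only if the induced map $\OO^a \to G$ is surjective. I will then set $K = \ker(\phi_1)$ and apply the snake lemma to the diagram
\[
\begin{array}{ccccccccc}
0 & \to & K & \to & \OO(-2)^c & \xrightarrow{\phi_1} & \OO(-1)^b & \to & 0 \\
& & \downarrow \phi_2|_K & & \downarrow \phi & & \downarrow \op{id} & & \\
0 & \to & \OO^a & \to & \OO(-1)^b \oplus \OO^a & \to & \OO(-1)^b & \to & 0
\end{array}
\]
whose rows are short exact, to conclude that $G \cong \op{coker}(\phi_2|_K\colon K \to \OO^a)$ whenever $\phi_1$ is surjective; this exhibits $G$ as a quotient of $\OO^a$.

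The problem then reduces to showing that $\phi_1$ is surjective for a general resolution, equivalently that a generic $b \times c$ matrix of linear forms on $\PP^2$ has rank $b$ at every point. I will invoke the standard dimension count via the incidence variety in $\op{Mat}_{b\times c}(H^0(\OO(1))) \times \PP^2$: the rank-drop locus in $\PP^2$ has expected codimension $c - b + 1$, so it is empty for generic matrices whenever $c - b + 1 > 2$. The hypothesis yields
\[
c - b = \tfrac{3}{2}\lambda + d = \chi(\xi) - r \ge 2,
\]
giving the required surjectivity. Since the resolution space dominates $M(\xi)$, general sheaves in $M(\xi)$ will be globally generated.

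The main obstacle will be handling the cases in which $d < -\tfrac{\sqrt{5}}{2}\lambda$ fails. Combined with $\chi(\xi) \ge r+2$, this forces $\lambda \le 5$, and Bogomolov's inequality $d \le \tfrac{\lambda^2}{2r}$ further restricts us to a finite list of Chern characters: the rank-$1$ subcases will amount to showing $\OO(\lambda) \otimes \mathcal{I}_Z$ is globally generated for general $Z$ (classical), the rank-$2$ subcases are covered by Le Potier, and the remaining higher-rank cases with small $\lambda$ I would address by direct arguments using Beilinson-style resolutions tailored to each Chern character.
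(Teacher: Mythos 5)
Your argument for the main case is essentially the paper's: apply Proposition \ref{p:gen_res}, reduce global generation to surjectivity of the induced map $\OO(-2)^c \to \OO(-1)^b$ in the bottom row of a snake-lemma diagram, and conclude because $c - b = \chi(\xi) - r \ge 2$ makes a general such matrix everywhere surjective.

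However, your treatment of the remaining cases contains a genuine gap. You claim that $d \ge -\tfrac{\sqrt{5}}{2}\lambda$ together with $\chi(\xi) \ge r+2$ and Bogomolov forces $\lambda \le 5$ and hence a finite list of Chern characters; this is false. For example, $\xi = (2, 10, 0)$ has $\chi(\xi) = 17 \ge r+2 = 4$, satisfies Bogomolov ($\Delta = 12.5$), and has $d = 0 > -\tfrac{\sqrt{5}}{2}\cdot 10$, so Proposition \ref{p:gen_res} does not apply and $\lambda = 10 > 5$. In general, $\chi(\xi) \ge r+2$ is $\tfrac{3}{2}\lambda + d \ge 2$, which becomes automatic once $\lambda$ is moderately large, so it imposes no upper bound on $\lambda$ and the exceptional cases form an infinite family. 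These are precisely the Chern characters whose corresponding exceptional slope $\gamma$ (in the sense of \cite{coskun_effective_2014}) is negative. The paper disposes of them not by a case-by-case Beilinson argument but by a single observation: for $\gamma < 0$, Proposition \ref{p:resolution} writes the general $G$ as a quotient of a direct sum of exceptional bundles $E_{-\beta}$, $E_{-\gamma}$ of non-negative slope, and a short inductive lemma (using Dr\'ezet's Theorem 2 of \cite{drezet_fibres_1986}) shows every exceptional bundle of non-negative slope on $\PP^2$ is globally generated; a quotient of a globally generated sheaf is globally generated. Without something playing the role of that lemma, your plan for the ``remaining higher-rank cases'' has no realistic chance of terminating, since there are infinitely many of them.
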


Proposition \ref{p:gen_res} follows from a more general result about resolutions of general sheaves in $M(\xi)$ by triads of exceptional vector bundles. We begin by recalling some basic facts, following \cite{coskun_effective_2014}. A stable vector bundle $E$ on $\PP^2$ is an \emph{exceptional bundle} if $\op{Ext}^1(E,E)=0$, in which case we call the slope $\alpha$ of $E$ an \emph{exceptional slope} and  write $E = E_\alpha$ since $E$ is the unique exceptional bundle with slope $\alpha$. All integers are exceptional slopes since the line bundles $\OO(n)$ are exceptional bundles.

Let $r_\alpha$ be the slope of $E_\alpha$, let $\Delta_\alpha = \tfrac{1}{2}(1-\tfrac{1}{r_\alpha^2})$ be the discriminant of $E_\alpha$, and let $\xi_\alpha = \op{ch}(E_\alpha)$. The set of exceptional slopes $\mathcal{E}$ is in bijection with the dyadic integers via a function $\varepsilon \colon \mathbb{Z}[\tfrac{1}{2}] \to \mathcal{E}$ defined inductively by $\varepsilon(n)=n$ for $n \in \mathbb{Z}$ and by setting
\[
	\varepsilon\left( \tfrac{2p+1}{2^{q+1}}\right) = \varepsilon\left( \tfrac{p}{2^q}\right). \varepsilon\left( \tfrac{p+1}{2^q}\right),
\]
where the product operation on exceptional slopes is defined by $\alpha.\beta = \tfrac{\alpha+\beta}{2} + \tfrac{\Delta_\beta - \Delta_\alpha}{3+\alpha-\beta}$. Since each dyadic integer can be written uniquely as $\tfrac{2p+1}{2^{q+1}}$, the $p$ and $q$ in the equation are uniquely determined, and we call the equation the \emph{standard decomposition} of the exceptional slope $\varepsilon\left( \tfrac{2p+1}{2^{q+1}}\right)$.

To find the right triad for resolving general sheaves in $M(\xi)$, one needs the \emph{corresponding exceptional slope} $\gamma$ of $\xi$. This is obtained by computing
\[
  \mu_0 = -\tfrac{3}{2}-\mu + \sqrt{\tfrac{5}{4}+\mu^2-\tfrac{2d}{r}},
\]
where $\mu = \lambda/r$ is the slope of $\xi$, and then $\gamma$ is the unique exceptional slope satisfying $|\mu_0-\gamma| < x_{\gamma}$, where $x_{\gamma} = \tfrac{3}{2}-\sqrt{\tfrac{9}{4}-\tfrac{1}{r_\gamma^2}}$. Then the resolution is described by
\begin{prop}[\cite{coskun_effective_2014}]\label{p:resolution} Let $\xi$ be a Chern character, let $\gamma$ be the corresponding exceptional slope to $\xi$, and let $\gamma = \alpha.\beta$ be the standard decomposition of $\gamma$. Then:
\begin{enumerate}
\item If $\chi(\xi \otimes \xi_\gamma) \ge 0$, then the general $G \in M(\xi)$ has a resolution
\[
	0 \to E_{-\alpha-3}^{m_1} \to E_{-\beta}^{m_2} \oplus E_{-\gamma}^{m_3} \to G \to 0,
\]
where $m_1 = -\chi(G \otimes E_\alpha)$, $m_2 = -\chi(G \otimes E_{\alpha.\gamma})$, $m_3 = \chi(G \otimes E_\gamma)$.
\item If $\chi(\xi\otimes\xi_\gamma) \le 0$, then the general $G \in M(\xi)$ has a resolution
\[
	0 \to E_{-\alpha-3}^{m_1} \oplus E_{-\gamma-3}^{m_3} \to E_{-\beta}^{m_2} \to G \to 0,
\]
where $m_1 = \chi(G \otimes E_{\gamma.\beta})$, $m_2=\chi(G \otimes E_\beta)$, $m_3=-\chi(G \otimes E_\gamma)$.
\end{enumerate} 
\end{prop}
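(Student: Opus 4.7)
The plan is to recover the resolution from a generalized Beilinson-type spectral sequence built from an exceptional triple, together with vanishing theorems for $\Ext$ groups between stable bundles on $\PP^2$.

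First, I would recall that the triple $(E_\alpha, E_{\alpha.\gamma}, E_\gamma)$ coming from the standard decomposition $\gamma=\alpha.\beta$ forms a full strong exceptional collection on $\PP^2$ (a helix), with Serre-dual triple $(E_{-\alpha-3}, E_{-\beta}, E_{-\gamma})$ in the dualized positions. From the helix one obtains, for any coherent sheaf $G$, a Beilinson-type spectral sequence with $E_1$-page
\[
E_1^{p,q} = \bigoplus_{i} \Ext^q(E_{\bullet,p}, G)\otimes E_{-\bullet,p}^{(p)}
\]
converging to $G$ (concentrated on the diagonal $p+q=0$). The task is then to show that for general $G\in M(\xi)$, only the row $q=0$ survives in the appropriate columns, so that the spectral sequence collapses into a two-term resolution.

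Second, I would verify the cohomological vanishing: for general stable $G$ with slope near the exceptional slope $\gamma$, I would check using Serre duality, stability, and the Dr\'ezet--Le Potier inequality (which is exactly what the hypotheses on $\xi$ encode via the $\delta$ function) that $\Ext^q(E_\bullet, G)=0$ for $q\ne 0$ in two of the three columns, and $\Ext^q(E_\bullet, G)=0$ for $q\ne 1$ in the third column. The key numerical input is that slope inequalities between $\mu(\xi)$ and the slopes of $E_\alpha, E_\beta, E_\gamma$ force all but one cohomological degree to vanish; the remaining degree computes the multiplicity as (up to sign) an Euler characteristic. This is exactly where the sign of $\chi(\xi\otimes \xi_\gamma)$ enters: it decides whether the $E_{-\gamma}$ term lives in the right-hand column (case (1), ``three terms on the right'') or the left-hand column (case (2), ``two terms on the left, one summand migrates''), because it governs the sign of $\chi(G\otimes E_\gamma)$ and hence which cohomological degree is nonzero.

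Third, with the vanishings in hand the spectral sequence degenerates to a complex of length two whose terms and multiplicities $m_1,m_2,m_3$ are read off from the surviving Euler characteristics, giving the two resolutions as stated. Finally, I would verify the converse direction that one needs in \ref{p:gen_res}: a general map between the two stated direct sums of exceptional bundles has cokernel that is torsion-free, has the right Chern character, and is Gieseker-semistable. This is checked by a dimension count on the space of morphisms modulo automorphisms (Kronecker-like moduli), showing that stability is an open condition satisfied somewhere, hence generically.

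The main obstacle is the second step: bookkeeping the $\Ext$-vanishings between $G$ and each exceptional bundle in the triple, and doing so uniformly in the two regimes determined by the sign of $\chi(\xi\otimes\xi_\gamma)$. This requires the full strength of the Dr\'ezet--Le Potier classification to rule out the bad cohomological degrees, and a careful reading of the standard decomposition $\gamma=\alpha.\beta$ to identify which slope interval the datum $\xi$ sits in. Once that combinatorics is nailed down, the resolution falls out of the collapsed Beilinson complex essentially formally.
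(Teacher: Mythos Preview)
The paper does not prove this proposition at all; it is quoted directly from \cite{coskun_effective_2014} (and ultimately goes back to Dr\'ezet and Le Potier) and is used as a black box. Your outline via the Beilinson-type spectral sequence associated to the exceptional triple, with the Dr\'ezet--Le Potier inequalities forcing the $\Ext$-vanishings, is indeed the approach taken in the cited literature, so there is nothing to compare against in this paper itself.
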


Our proposition follows from the special case when $\chi(\xi) \ge 0$ and $\gamma = 0$.

\begin{proof}[Proof of Proposition \ref{p:gen_res}] We first claim that $\gamma$, the corresponding exceptional slope to $\xi$, is 0. For $\lambda \ge 0$, the inequality $|\mu_0| < x_0 = \tfrac{3-\sqrt{5}}{2}$ is equivalent to the pair of inequalities
\[
	-(3-\tfrac{\sqrt{5}}{2})\lambda - \tfrac{9-3\sqrt{5}}{2}r < d < -\tfrac{\sqrt{5}}{2}\lambda.
\]
The left inequality is guaranteed by $\chi(\xi) \ge 0$, which yields $d \ge -\tfrac{3}{2}\lambda-r$, while the right inequality is a hypothesis.

Since $\chi(\xi) \ge 0$, the resolution of general sheaves $G$ in $M(\xi)$ is thus of the form
\[
 0 \to \OO(-2)^{c} \to \OO(-1)^{b} \oplus \OO^{a} \to G \to 0.
\]
Applying $\chi$, we see that $a = \chi(\xi) = r+\tfrac{3}{2}\lambda+d$. Additivity on Chern characters yields the relations $a+b-c = r$, $-b+2c = \lambda$, and $\tfrac{1}{2}b - 2c = d$, which can be solved to get $b = -2(\lambda+d)$, $c = -\tfrac{1}{2}\lambda - d$. This proves the first part of the proposition.

The converse follows from a dimension count. The dimension of such resolutions is the dimension of $\op{Hom}(\OO(-2)^{c},\OO(-1)^{b} \oplus \OO^{a})$, minus the dimensions of automorphisms of $\OO(-2)^{c}$ and $\OO(-1)^{b} \oplus \OO^{a}$, plus 1 because we are accounting for scalars twice. The result is exactly
\[
 3bc + 6ac - c^2 - b^2 - a^2 - 3ab + 1 = \lambda^2 - 2rd-r^2+1 = \op{dim} M(\xi),
\]
so the general resolution produces the general vector bundle in $M(\xi)$.
\end{proof}

\begin{proof}[Proof of Proposition \ref{p:gg}] The hardest case is when $\gamma$, the corresponding exceptional slope to $\xi$, is 0. Let $G$ be a general sheaf in $M(\xi)$, which has a resolution given by Proposition \ref{p:gen_res}. The snake lemma applied to the commutative diagram
\[\xymatrix{
  && \OO^{\oplus a} \ar[r]^-{\simeq} \ar[d] & H^0(G) \otimes \OO \ar[d] \\
 0 \ar[r] & \OO(-2)^{\oplus c} \ar@{=}[d] \ar[r] & \OO(-1)^{\oplus b} \oplus \OO^{\oplus a} \ar[r] \ar@{->>}[d] & G \ar[r] \ar@{->>}[d] & 0 \\
  & \OO(-2)^{\oplus c} \ar[r] & \OO(-1)^{\oplus b} \ar[r] & Q
}\]
yields an exact sequence $\OO(-2)^{\oplus c} \to \OO(-1)^{\oplus b} \to Q \to 0$ in which the first map is general if $G$ is general. The assumption that $\chi(\xi) \ge r+2$ is equivalent to $c \ge b+2$, which guarantees that general maps $\OO(-2)^{\oplus c} \to \OO(-1)^{\oplus b}$ are surjective. Thus $Q = 0$, so $G$ is globally generated.

Next, we rule out the case $\gamma > 0$. As in the proof of the Proposition \ref{p:gen_res}, our assumption on $\chi(\xi)$ (even $\chi(\xi) \ge 0$ suffices) ensures that $\mu_0 < \tfrac{3-\sqrt{5}}{2}$. Thus $\gamma \le 0$.

The last case is $\gamma < 0$. In this case $0 \le -\beta < -\gamma$, so the resolution for general $G$ given by Proposition \ref{p:resolution} expresses $G$ as a quotient of exceptional bundles with non-negative slope. The following lemma guarantees that such exceptional bundles are globally generated, hence so is $G$.
\end{proof}

\begin{lem} Let $\gamma \ge 0$ be an exceptional slope. Then $E_{\gamma}$ is globally generated.
\end{lem}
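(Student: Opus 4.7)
I would proceed by induction on the rank $r_\gamma$ of $E_\gamma$. When $r_\gamma = 1$, the only exceptional slopes of rank one are integers, and the hypothesis $\gamma \ge 0$ gives $E_\gamma = \OO(\gamma)$ with $\gamma \in \mathbb{Z}_{\ge 0}$, which is obviously globally generated. For the inductive step, suppose $r_\gamma \ge 2$, so $\gamma$ is not an integer, and consider the standard decomposition $\gamma = \alpha.\beta$ as in the excerpt. The exceptional slopes $\alpha < \beta$ have strictly smaller rank than $\gamma$. Writing $\gamma = \varepsilon\!\left(\tfrac{2p+1}{2^{q+1}}\right)$, so that $\alpha = \varepsilon(\tfrac{p}{2^q})$ and $\beta = \varepsilon(\tfrac{p+1}{2^q})$, the assumption $\gamma \ge 0$ is equivalent (via monotonicity of $\varepsilon$) to $p \ge 0$, and hence $\alpha \ge 0$. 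By the inductive hypothesis, $E_\alpha$ is globally generated.

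The heart of the argument is to realize $E_\gamma$ as a quotient of a direct sum of copies of $E_\alpha$. The ordered pair $(E_\alpha, E_\gamma)$ is an adjacent exceptional pair on $\PP^2$ in the mutation tree of Drezet--Le Potier (this is precisely the content of the standard decomposition $\gamma = \alpha.\beta$). In particular $\op{Ext}^i(E_\alpha, E_\gamma) = 0$ for $i > 0$, so setting $m = \op{hom}(E_\alpha, E_\gamma) = \chi(E_\alpha, E_\gamma) > 0$, I would argue that the universal evaluation
\[
	\op{ev} \colon \op{Hom}(E_\alpha, E_\gamma) \otimes E_\alpha \longrightarrow E_\gamma
\]
is surjective, with kernel $L_{E_\gamma} E_\alpha$ again an exceptional bundle. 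Granted this, there is a short exact sequence
\[
	0 \to K \to E_\alpha^m \to E_\gamma \to 0.
\]
Since $E_\alpha$ is globally generated by the inductive hypothesis, so is $E_\alpha^m$, and therefore its quotient $E_\gamma$ is globally generated, which closes the induction.

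The main obstacle is the surjectivity of $\op{ev}$: a priori it could fail by being injective with some cokernel, but for adjacent exceptional pairs arising from a standard decomposition on $\PP^2$ this is ruled out by the explicit form of the Drezet--Le Potier/Gorodentsev--Rudakov mutations (the hypothesis $\gamma - \alpha < 3$ keeps the mutation within the heart and prevents it from becoming a shifted complex). A concrete illustration is the case $\gamma = \tfrac{1}{2}$, $\alpha = 0$: the exact sequence is the twisted Euler sequence $0 \to \OO(-1) \to \OO^3 \to T_{\PP^2}(-1) \to 0$, and the general pattern follows the same template.
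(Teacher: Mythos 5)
Your proposal is, at its core, the paper's proof: both realize $E_\gamma$ as a quotient of $E_\alpha^{\oplus m}$ via the standard decomposition $\gamma = \alpha.\beta$, apply the inductive hypothesis to conclude $E_\alpha$ is globally generated, and pass global generation through the surjection. The differences are mild. You induct on $\rank E_\gamma$ where the paper inducts on the dyadic level $q$; both descend, since in a standard decomposition $r_\alpha < r_\gamma$, though you should note that this rank inequality is itself a (true, standard) fact about the Markov-triple structure of ranks on $\PP^2$ rather than something immediate from the definition.

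The one place you should tighten is exactly the point you yourself flag as ``the main obstacle'': surjectivity of the evaluation map $\op{Hom}(E_\alpha,E_\gamma)\otimes E_\alpha \to E_\gamma$. Your parenthetical justification --- that $\gamma - \alpha < 3$ keeps the mutation inside the heart and so prevents a shifted complex --- does not actually establish surjectivity. The recoil case (evaluation injective, left mutation a shifted sheaf) also ``stays in the heart'' in the sense of producing a single sheaf in a single degree, so staying in the heart alone does not distinguish division from recoil. What one actually needs is the Gorodentsev--Rudakov/Drezet trichotomy for mutations of exceptional pairs, together with the identification of which type occurs for the particular pair $(E_\alpha,E_\gamma)$ coming from a standard decomposition on $\PP^2$; the paper does this by citing Theorem~2 of Drezet directly. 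So the argument is correct and closes once you replace the heuristic with that citation --- but as written that step is a gap, and it is the only nontrivial input to the whole lemma.
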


\begin{proof} If $\gamma = n \ge 0$ is an integer, then $E_{\gamma}=\OO_{\PP^2}(n)$ is globally generated. For $\gamma > 0$ a non-integer, we use induction on $q$, where $p \ge 0$ and $q \ge 0$ are the unique integers such that $\gamma = \epsilon(\tfrac{2p+1}{2^{q+1}})$. Let $\gamma = \alpha.\beta$ be the standard decomposition of $\gamma$. By the inductive assumption, $E_\alpha$ is globally generated. Since we can write $\alpha=\epsilon(\tfrac{2p}{2^{q+1}})$, Theorem 2 of \cite{drezet_fibres_1986} implies that $E_{\alpha} \otimes \op{Hom}(E_{\alpha},E_{\gamma}) \to E_{\gamma}$ is surjective. Since $E_{\alpha}$ is globally generated, so is $E_{\gamma}$.
\end{proof}

\section{Concluding Remarks}\label{s:concluding}
Our work in this paper suggests many questions and future directions. First, we ask how generally Theorem B holds. Can it be extended to other del Pezzo surfaces and quotients other than ideal sheaves of points? Second, although the multiple point computations provide an interesting link between quot schemes and singularity theory, admissibility of the map (see Remark \ref{admissible}) is so difficult to check that even strong algebraic genericity properties do not seem to help (see Conjecture \ref{c:admissible}). In the absence of developments in algebraic multisingularity theory, we need a more rigorous way to compute the expected cardinality of finite Quot schemes.

Despite these difficulties with multiple point formulas, it would be interesting to extend the multiple point computation to $n=8$ and compare it with the Euler characteristic of Theorem \ref{thm:chi_gen}. This would provide evidence (or counter-evidence) for extending Theorem \ref{residual poly} to $i=7$. Unfortunately, the current method of computing the auxiliary polynomial $\widehat Q$ requires processing an ideal with too many variables and generators to be computationally feasible (see Remark \ref{r:infeasible}). Generalizing in another direction, the set-up for our multiple point computations could likely be imitated on $\PP^3$ or another Fano threefold. One would then have to investigate whether the results have any enumerative significance.

As we have mentioned, we really want to change \emph{injective} in Corollary \ref{c:SD_injective} to \emph{isomorphism} by computing the number of sections of determinant bundles on the moduli spaces $M_S(e)$. These moduli spaces are more mysterious than the Hilbert scheme of points, but we imagine that there could be a way to generalize Theorem \ref{thm:chi_gen} beyond Hilbert schemes.

The Grothendieck quot scheme argument from \cite{marian_level-rank_2007} used in this paper does not generalize immediately to other (not del Pezzo) surfaces, although versions of strange duality are expected to hold (for strange duality results on a variety of surfaces, see \cite{danila_resultats_2002}, \cite{marian_sheaves_2008},
\cite{marian_tour_2008},
\cite{MR3079253},
\cite{marian_strange_2014}, \cite{MR2989230}, \cite{yuan_strange_2016}).
For example, one can check that on a $K$-trivial surface, candidates for strange duality \emph{never} yield quot schemes that are finite and reduced. However, if the points of the quot scheme still correspond to sections that span the space of sections of the theta bundle, then it may be possible to pick out a basis, perhaps as a Chern class of some excess intersection bundle on the quot scheme.

\bibliographystyle{alpha}
\bibliography{My_Library}{}

\end{document}